\documentclass[]{article}

\usepackage[linesnumbered,ruled,vlined]{algorithm2e}

\usepackage{amsthm}
\usepackage{amsmath}
\usepackage{amssymb}
\usepackage{amsgen}

\usepackage{array}
\usepackage{authblk}

\usepackage[english]{babel}
\usepackage{booktabs}

\usepackage[font=footnotesize]{caption}
\usepackage[commandnameprefix=always, markup=nocolor]{changes}
\usepackage{comment}

\usepackage{dsfont}

\usepackage{etoolbox}

\usepackage{fancyhdr}
\usepackage{float}
\usepackage{fontawesome}
\usepackage[T1]{fontenc}

\usepackage[a4paper, portrait, margin=1.1811in]{geometry}
\usepackage{graphicx}

\usepackage{helvet}
\usepackage[colorlinks, citecolor=teal, linkcolor=teal, urlcolor=teal]{hyperref}
\usepackage[acronym, nonumberlist, style=long, toc]{glossaries}

\usepackage[misc]{ifsym}
\usepackage[utf8]{inputenc}

\usepackage{lmodern}

\usepackage{makecell} 
\usepackage{makeidx}

\usepackage[framemethod=TikZ]{mdframed}

\usepackage{multicol}
\usepackage{multirow}

\usepackage{natbib}   

\usepackage{orcidlink}

\usepackage{rotating}

\usepackage{scrextend}
\usepackage{subfig}

\usepackage{titlesec}

\usepackage{varwidth}
\usepackage{verbatim}

\usepackage{xcolor}

\usepackage{xfor}
\usepackage{xkeyval}

\SetCommentSty{mycommfont}

\SetKwInput{KwInput}{Input}                
\SetKwInput{KwOutput}{Output}              

\newcounter{theo}[section]
\renewcommand{\thetheo}{\arabic{section}.\arabic{theo}}
\newenvironment{theo}[2][]{%
\refstepcounter{theo}%
\ifstrempty{#1}%
{\mdfsetup{%
frametitle={%
\tikz[baseline=(current bounding box.east),outer sep=0pt]
\node[anchor=east,rectangle,fill=gray!20]
{\strut Theorem~\thetheo};}}
}%
{\mdfsetup{%
frametitle={%
\tikz[baseline=(current bounding box.east),outer sep=0pt]
\node[anchor=east,rectangle,fill=gray!20]
{\strut Theorem~\thetheo:~#1};}}%
}%
\mdfsetup{innertopmargin=10pt,linecolor=gray!20,%
linewidth=2pt,topline=true,%
frametitleaboveskip=\dimexpr-\ht\strutbox\relax
}
\begin{mdframed}[]\relax%
\label{#2}}{\end{mdframed}}

\newcounter{prf}[section]

\newenvironment{prf}[2][]{%
\refstepcounter{prf}%
\ifstrempty{#1}%
{\mdfsetup{%
frametitle={%
\tikz[baseline=(current bounding box.east),outer sep=0pt]
\node[anchor=east,rectangle,fill=gray!20]
{\strut Proof};}}
}%
{\mdfsetup{%
frametitle={%
\tikz[baseline=(current bounding box.east),outer sep=0pt]
\node[anchor=east,rectangle,fill=gray!20]
{\strut Proof:~#1};}}%
}%
\mdfsetup{innertopmargin=10pt,linecolor=gray!20,%
linewidth=2pt,topline=true,%
frametitleaboveskip=\dimexpr-\ht\strutbox\relax
}
\begin{mdframed}[]\relax%
\label{#2}}{\qed\end{mdframed}}

\newcounter{prop}[section]
\renewcommand{\theprop}{\arabic{section}.\arabic{prop}}
\newenvironment{prop}[2][]{%
\refstepcounter{prop}%
\ifstrempty{#1}%
{\mdfsetup{%
frametitle={%
\tikz[baseline=(current bounding box.east),outer sep=0pt]
\node[anchor=east,rectangle,fill=gray!20]
{\strut Proposition~\theprop};}}
}%
{\mdfsetup{%
frametitle={%
\tikz[baseline=(current bounding box.east),outer sep=0pt]
\node[anchor=east,rectangle,fill=gray!20]
{\strut Proposition~\theprop:~#1};}}%
}%
\mdfsetup{innertopmargin=10pt,linecolor=gray!20,%
linewidth=2pt,topline=true,%
frametitleaboveskip=\dimexpr-\ht\strutbox\relax
}
\begin{mdframed}[]\relax%
\label{#2}}{\end{mdframed}}

\newcounter{defi}[section]
\renewcommand{\thedefi}{\arabic{section}.\arabic{defi}}
\newenvironment{defi}[2][]{%
\refstepcounter{defi}%
\ifstrempty{#1}%
{\mdfsetup{%
frametitle={%
\tikz[baseline=(current bounding box.east),outer sep=0pt]
\node[anchor=east,rectangle,fill=gray!20]
{\strut Definition~\thedefi};}}
}%
{\mdfsetup{%
frametitle={%
\tikz[baseline=(current bounding box.east),outer sep=0pt]
\node[anchor=east,rectangle,fill=gray!20]
{\strut Definition~\thedefi:~\textbf{#1}};}}%
}%
\mdfsetup{innertopmargin=10pt,linecolor=gray!20,%
linewidth=2pt,topline=true,%
frametitleaboveskip=\dimexpr-\ht\strutbox\relax
}
\begin{mdframed}[]\relax%
\label{#2}}{\end{mdframed}}

\newcounter{rema}[section]
\renewcommand{\therema}{\arabic{section}.\arabic{rema}}

\newcounter{lem}[section]
\renewcommand{\thelem}{\arabic{section}.\arabic{lem}}

\makeglossaries

\newacronym{ATD}{ATD}{Average Trimmed Distance}

\newacronym{BDCA}{BDCA}{Boosted Difference of Convex Functions Algorithm}

\newacronym[longplural={Confidence Intervals}, \glsshortpluralkey={CIs}]{CI}{CI}{Confidence Interval}
\newacronym{CNPP}{CNPP}{Continuous Nonlinear Programming Problem}
\newacronym{CDF}{CDF}{Cumulative Distribution Function}

\newacronym{DC}{DC}{Difference of Convex Functions}
\newacronym{DCA}{DCA}{Difference of Convex Functions Algorithm}
\newacronym{DMLP}{DMLP}{Data Model with Leverage Points}

\newacronym{iid}{iid}{independent and identically distributed}

\newacronym{KKT}{KKT}{Karush–Kuhn–Tucker}

\newacronym{LTS}{LTS}{Least Trimmed Squares}

\newacronym{MFCQ}{MFCQ}{Mangasarian-Fromovitz Constraint Qualification}
\newacronym{MIPP}{MIPP}{Mixed Integer Programming Problem}
\newacronym{MSD}{MSD}{Million Song Dataset}

\newacronym{OLS}{OLS}{Ordinary Least Squares}
\newacronym{OSCM}{OSCM}{One Sided Contamination Model}

\newacronym{PDF}{PDF}{Probability Density Function}

\newacronym{QP}{QP}{Quadratic Program}
  
\newacronym{RANSAC}{RANSAC}{Random Sample Consensus}

\newacronym{sBDCA}{sBDCA}{successive Boosted Difference of Convex Functions Algorithm}
\newacronym{SCD}{SCD}{Superconductivity Data}

\newacronym{TS}{TS}{Theil-Sen}

\newacronym{WLS}{WLS}{Weighted Least Squares}

\setcitestyle{authoryear, open={(},close={)}}

\begin{document}

\pagestyle{fancy}

\lhead{Thormann et al.}
\rhead{Robust Regression with DC Programming}

\setlength{\headheight}{35pt}

\newpage
\setcounter{page}{1}
\renewcommand{\thepage}{\arabic{page}}
	
\captionsetup[figure]{labelfont={bf},
                      labelformat={default}, 
                      labelsep=period, 
                      name={Figure }
                      }	
                      
\captionsetup[table]{labelfont={bf}, 
                     labelformat={default}, 
                     labelsep=period, 
                     name={Table }
                     }

\setlength{\parskip}{0.5em}



\fancypagestyle{plain}{
	\fancyhf{}
	\renewcommand{\headrulewidth}{0pt}
	\renewcommand{\familydefault}{\sfdefault}
	\lhead{\color{teal}\normalsize \textbf{Working Paper} (v1)\\ \color{black}
	\text{Operational Research Group, University of Southampton}\\ }
	
}

\makeatletter

\patchcmd{\@maketitle}{\LARGE \@title}{\fontsize{19}{24}\selectfont\@title}{}{}
\makeatother

\setlength{\affilsep}{2em}  
\newsavebox\affbox

\author{
    {\large \textbf{Marah-Lisanne Thormann}} {\small \href{mailto:m.-l.thormann@soton.ac.uk}{\Letter}  \href{https://www.southampton.ac.uk/people/5ztphy/ms-marah-thormann}{\faHome} \protect\\ \vspace{0.1cm}
     School of Mathematical Sciences, \protect\\ University of Southampton, \protect\\ SO17 1BJ Southampton, UK} \\ \vspace{-0.25cm}
    {\large \textbf{Phan Tu Vuong}} {\small \href{mailto:t.v.phan@soton.ac.uk}{\Letter} \href{https://www.southampton.ac.uk/people/5y2ds9/doctor-vuong-phan}{\faHome} {\large \orcidlink{0000-0002-1474-994X}} \protect\\ \vspace{0.1cm}
     School of Mathematical Sciences,  \protect\\ University of Southampton, \protect\\ SO17 1BJ  Southampton, UK}  \vspace{0.3cm}
    
    {\large \textbf{Alain B. Zemkoho}} {\small \href{mailto:a.b.zemkoho@soton.ac.uk}{\Letter} \href{https://www.southampton.ac.uk/~abz1e14/index.html}{\faHome} {\large \orcidlink{0000-0003-1265-4178}} \protect\\
     \vspace{0.1cm} School of Mathematical Sciences, \protect\\ University of Southampton, \protect\\ SO17 1BJ  Southampton, UK} \vspace{0.3cm}

    {\large \textbf{Tri-Dung Nguyen}} {\small \href{mailto:t.nguyen-264@kent.ac.uk}{\Letter} \href{https://www.kent.ac.uk/kent-business-school/people/4056}{\faHome} {\large \orcidlink{0000-0002-4158-9099}} \protect\\
    \vspace{0.1cm} Kent Business School, \protect\\ University of Kent, \protect\\ CT2 7NZ Canterbury, UK} \vspace{-0.1cm}
}

\titlespacing\section{0pt}{12pt plus 4pt minus 2pt}{0pt plus 2pt minus 2pt}
\titlespacing\subsection{12pt}{12pt plus 4pt minus 2pt}{0pt plus 2pt minus 2pt}
\titlespacing\subsubsection{12pt}{12pt plus 4pt minus 2pt}{0pt plus 2pt minus 2pt}

\titleformat{\section}{\normalfont\fontsize{10}{15}\bfseries}{\thesection.}{1em}{}
\titleformat{\subsection}{\normalfont\fontsize{10}{15}\bfseries}{\thesubsection.}{1em}{}
\titleformat{\subsubsection}{\normalfont\fontsize{10}{15}\bfseries}{\thesubsubsection.}{1em}{}

\title{\vspace{-0.3cm} \textbf{Faster than Fast-LTS: Robust Regression and  Outlier Detection with DC Programming} \\ \vspace{0.6cm}}
\date{}  
	
\begingroup
\let\center\flushleft
\let\endcenter\endflushleft

\maketitle

\vspace{-0.7cm}

	

\noindent\rule{1.25cm}{0.4pt}  \rlap{\color{gray}\vrule}%
  \fboxsep1.5mm\colorbox[rgb]{1,1,1}{\raisebox{-0.4ex}{%
    \large\selectfont\sffamily\bfseries\abstractname}} \noindent\rule{11.6cm}{0.4pt}
    
\noindent

    {\small When datasets contain outliers, robust regression is a well-established alternative to Ordinary Least Squares. 
    A commonly employed robust estimator is \gls{LTS}, which computes the regression coefficients from a subset of observations. 
    Determining the exact solution corresponds to a combinatorial problem with prohibitive computational costs, even for instances of moderate dimension.
    Thus, the most prevalent approach in practice remains a heuristic known as Fast-\gls{LTS}.  
    Although the heuristic often performs effectively, certain elements of the approach remain open to improvement.
    In particular, its core procedure provides robust results only when initialized with a large number of starting points.
    To address the heuristic's limitations, this paper reformulates the \gls{LTS} problem as a concave minimization problem subject to a capped simplex constraint, and proposes the \gls{sBDCA} as a solution method. 
    Theoretically, we establish via the \L ojasiewicz property that \gls{sBDCA} converges to a local solution with a linear rate in the fastest case.  
    To ensure robustness from a single initialization in practice, we derive and integrate a problem-specific preconditioning matrix into the algorithmic setup. 
    Building on this theoretical foundation, we 
    conduct numerical studies on various synthetic and real-world datasets to demonstrate the effectiveness of \gls{sBDCA} with preconditioning. 
    Specifically, we show that our approach is up to 3.25 times faster than Fast-\gls{LTS} and achieves up to 90\% lower objective function values, particularly in high-dimensional settings.
    As all code is openly available, this paper further provides a practical guide to robust regression in Python.
     
    \vspace{0.2cm}
    
    \noindent \textbf{Keywords}: Successive Boosted Difference of Convex Functions Algorithm; Difference of Convex Functions Programming; Least Trimmed Squares; Robust Regression; Outlier Detection

    \vspace{0.2cm}
  
    \noindent \textbf{Mathematics Subject Classification (2020)}: 62F30; 62F35; 62J20; 65K05; 65K10; 90C26   

    \vspace*{\fill}

    \noindent \rule{15cm}{0.2pt}
    {\footnotesize \noindent 
    \textbf{Funding Acknowledgement}: 
    The first author receives a PhD studentship from the School of Mathematical Sciences at the University of Southampton.}
    
    }

\endgroup

\glsresetall


\tableofcontents

\newpage


\section{Introduction}\label{sec:Intro}



``Bad data yields bad models. This is often referred to as the GIGO principle: garbage .. in, garbage .. out'' [\citet[p. 17]{Baesens2009}]. 
Thus, modeling can quickly backfire if attention is not given to the quality of the data. 
However, measuring and improving data quality can be a time-consuming task, often involving several steps before the model can even be estimated [cf.~\citet[p. 1781]{Schelter2018}; \citet[pp. 39--40]{garcia2015}]. 
In fact, the time spent preparing data usually far outweighs the time required for the modeling itself [cf. \citet[p.~1]{fernandes2023}]. 
To overcome this challenge, both practitioners and theoreticians have been investigating robust methods capable of handling suboptimal data quality for over 60 years [cf.~\citet[p.~179]{Fahrmeir2021}; \citet[p. 43]{Tukey1962}]. 
In particular, for the Linear Regression model, several robust estimators have emerged that grapple with the trade-off between statistical efficiency and robustness against model deviations [cf. \citet[pp. 64--65]{Maronna2019}; \citet[pp. 441--445]{Verardi2009}].
Among these estimators, \gls{LTS} offers superior statistical properties but entails a combinatorial burden [cf. \citet[p. 227]{Alfons2013}; \citet[p. 45]{FLORES2015}; \citet[p. 15]{rousseeuw1987}].
In this paper, we address the computational challenge of computing this estimator, and demonstrate how it can be approximated more effectively from a single starting point than with existing methods.


To better contextualize the problem setting and our proposed methodology, the remainder of this introduction is divided into several subsections. 
We begin by revisiting the Linear Regression model and explain how its coefficients are estimated using \gls{OLS}. 
Subsequently, we address the limitations of \gls{OLS} by focusing on its sensitivity to outliers in the underlying data.
We then discuss the statistical properties an estimator should satisfy to ensure robustness.
Afterwards, we introduce \gls{LTS} as a commonly used robust estimator and highlight its advantages as well as its computational complexity. 
Next, we review existing solution approaches for \gls{LTS} and emphasize the unexplored potential of \gls{DC} programming to outperform the state-of-the-art approach. 
Ultimately, we summarize the key contributions of this paper, which lays the foundation for a practical guide to robust regression and outlier detection based on \gls{LTS} and \gls{DC} programming in Python.


\subsection{Linear Regression Model and Ordinary Least Squares}


In many practical problems, we aim to estimate a model that describes the relationship between a set of explanatory variables and a dependent variable [cf.~\citet[p. 23]{Fahrmeir2021}]. 
If we assume linear and uncorrelated effects of the covariates and a continuous response, one of the most commonly applied statistical models is Linear Regression [cf. \citet[p. 44]{hastie2009}]. 
The core idea of the approach is to model the relationship as a non-deterministic linear function such that for a dataset with $n \in \mathbb{N}$ observations, and $p  \in \mathbb{N}$ independent variables (including an intercept), we obtain a system of equations with the form
\begin{equation}\label{eq:LR}
    \mathbf{y} = \mathbf{X} \boldsymbol{\beta} + \boldsymbol{\epsilon},
\end{equation}
where $\mathbf{y} := \begin{bmatrix} y_1, \ldots, y_n \end{bmatrix}^\top \in \mathbb{R}^n$ denotes the dependent variable, and $\boldsymbol{\epsilon} := \begin{bmatrix} \epsilon_1, \ldots, \epsilon_n \end{bmatrix} ^\top \in \mathbb{R}^n$ is the random component [cf. \citet[pp. 23--24]{Fahrmeir2021}]. 
The systematic component $\mathbf{X} \boldsymbol{\beta} \in \mathbb{R}^n$, consists of the vector of unknown regression coefficients $\boldsymbol{\beta} := \begin{bmatrix} \beta_1, \ldots, \beta_p \end{bmatrix} ^{\top} \in \mathbb{R}^p$, and the  design matrix given by 
\begin{equation}
    \mathbf{X} := \begin{bmatrix}
                       x_{11} & x_{12} & \cdots & x_{1p} \\ 
                       x_{21} & x_{22} & \cdots & x_{2p} \\
                       \vdots & \vdots & \ddots & \vdots \\
                       x_{n1} & x_{n2} & \cdots & x_{np} 
                    \end{bmatrix}
                =  
    \begin{bmatrix} \mathbf{x}_1^\top, \ldots, \mathbf{x}_n^\top \end{bmatrix}^\top \in \mathcal{M}_{n \times p} \left(\mathbb{R} \right),
\end{equation}
whose $n$-rows are the feature vectors $\mathbf{x}_i := \begin{bmatrix} x_{i 1}, \ldots, x_{ip} \end{bmatrix} \in \mathbb{R}^{1 \times p}$ for all instances $i \in \{1, \ldots, n \}$. 
For the random component, the model assumes that the individual elements are \gls{iid}, i.e., $\mathbb{E} \left(\boldsymbol{\epsilon} \right) = \mathbf{0}_n \ \text{and} \ \mathbb{V} \left(\boldsymbol{\epsilon} \right) = \sigma^2 \cdot \mathbf{I}_n$ where $\mathbf{0}_n \in \{0\}^n$, $\sigma \in \mathbb{R}_+$ denotes the constant standard deviation, and $\mathbf{I}_n \in \mathcal{M}_{n \times n} \left ( \mathbb{R} \right)$ is the identity matrix [cf. \citet[p. 30]{Fahrmeir2021}]. 
Subsequently, the unknown regression coefficients can be estimated using \gls{OLS}, a method first developed by Gauss and Legendre around 1800 [cf. \citet[p. 44]{hastie2009}; \citet[p. 239]{Plackett1972}]. 
According to this method, the coefficients $\widehat{\boldsymbol{\beta}} \in \mathbb{R}^p$ are estimated by minimizing the sum of squared residuals, i.e.,
\begin{align}\label{eq:classic_OLS}
    \underset{\boldsymbol{\beta}}{\text{minimize}} \ \sum_{i=1}^n r_i^2 = \sum_{i=1}^n \left(y_i -  \hat{y}_i\right)^2,
\end{align}
where $r_i \in \mathbb{R}$ corresponds to the residual for instance $i$ calculated as the difference between target variable $y_i \in \mathbb{R}$ and model prediction $\hat{y}_i := \mathbf{x}_i \widehat{\boldsymbol{\beta}}$. 
The solution to this minimization problem can be computed using matrix algebra, which made \gls{OLS} especially attractive at the time of its invention, as it was the only feasible approach not requiring a computer [cf. \citet[p.~2]{rousseeuw1987}]. 
Even today,  \gls{OLS} remains popular due to its computational speed and optimality under the additional assumption of normally distributed errors [cf. \citet[p. 75]{rousseeuw2011}]. 
However, the method exhibits limited robustness under poor data quality, which is addressed in the next subsection [cf. \citet[p. 787]{Western1995}; \citet[p. 2]{rousseeuw1987}]. 


\subsection{Sensitivity of Ordinary Least Squares to Outliers}\label{sec:outliers_in_regression}


One of the main reasons for poor data quality is the presence of outliers [cf. \citet[p. 237]{aggarwal2015}]. 
Formally, an outlier can be defined as an instance that is so different from the remaining data that it is suspected to have been generated by a different mechanism [cf. \citet[p. 1]{hawkins1980}]. 
Therefore, outliers can be errors, extreme values, or ``... discordant data that may arise from the natural variation within the population or process'' [\citet[p.~163]{Salgado2016}].  
While outliers can provide useful information in applications like fraud detection, in regression analysis they often cause the estimator to break down [cf. \citet[p.~237]{aggarwal2015}; \citet[p.~163]{Salgado2016}]. 
Moreover, \textit{masking} and \textit{swamping} effects may occur, where outlying data points hide each other and go undetected, or valid observations are misclassified as outliers, respectively  [cf. \citet[p. 626]{She2011}; \citet[p. 145]{pena1995}]. 
Simplified illustrations of both effects are shown in the left and right columns of Figure~\ref{fig:outlier_types}, using artificially generated data with one independent variable. 
In the upper subplots, the outliers are clearly visible due to the univariate nature of the data. 
However, the diagnostic subplots highlight the challenge of detecting outliers in multivariate settings, where univariate plots may fail to reveal anomalies.


\vspace{-0.2cm}

\begin{figure}[H]%
    \centering
    \captionsetup{justification=centering}
    \includegraphics[width=1\textwidth]{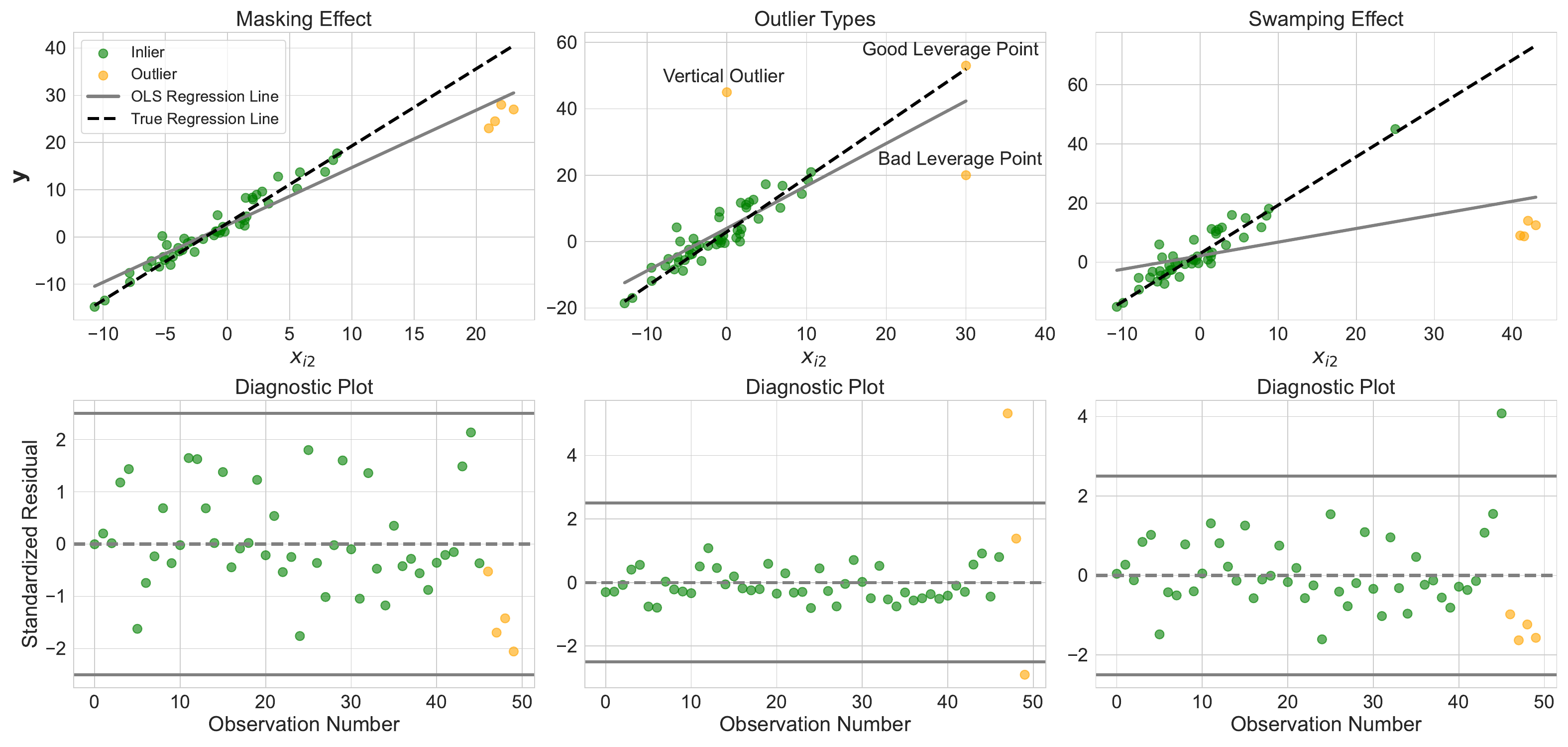}
    \caption{Masking, Swamping, and Types of Outliers in Regression Analysis.}%
    \label{fig:outlier_types}%
\end{figure}

\vspace{-0.2cm}


In regression analysis, different types of outliers can be distinguished. 
If an observation deviates significantly from the true underlying regression line, it is classified as a \textit{regression outlier} [cf.~\citet[p.~636]{Rousseeuw1990}]. 
In contrast, an instance is considered a \textit{leverage point} if it is unusual with respect to the covariates [cf. \citet[p. 264]{reimann2008}]. 
Observations that meet both criteria are called \textit{bad leverage points}, while those that meet only the first or second definition are referred to as  \textit{vertical outliers} and \textit{good leverage points}, respectively [cf. \citet[pp. 433--434]{beliakov2012}; \citet[p. 410]{alma2011}; \citet[p. 440]{Verardi2009}; \citet[p. 636]{Rousseeuw1990}; \citet[pp. 3--7]{rousseeuw1987}]. 
A simple visualization of the different outlier types is provided in the subplots located in the middle column of Figure~\ref{fig:outlier_types}.
In the next subsection, we explore strategies for handling these outliers and examine the statistical requirements that a regression estimator must satisfy to ensure robustness.


\subsection{Robust Estimators in Regression Analysis}\label{sec:robust_estimators_in_reg_analysis}


According to \citet[p. 88]{hampel1973} and \citet[p. 26]{hampel1986}, a routine dataset may contain up to 10\% gross errors. 
Therefore, in most practical modeling problems, outlying observations appear to be the rule rather than the exception and it is essential to consider how best to mitigate their impact [cf. \citet[p. 3]{stuart2011}]. 
The academic literature suggests three general approaches in this regard [cf. \citet[p. 1]{SABZEKAR2021}]. 
First, one can preprocess the data to filter out conspicuous instances [cf. \citet[p. 108]{garcia2015}]. 
Second, sample polishing methods can be applied to correct anomalies prior to model training [cf. \citet[p. 1]{SABZEKAR2021}]. 
Lastly, one can employ robust training algorithms whose results are less sensitive to outliers [cf. \citet[p. 108]{garcia2015}]. 
As robust training methods, to some extent, perform preprocessing and modeling simultaneously, they can offer a time advantage over approaches that separate outlier detection and modeling into two distinct, consecutive tasks.
These methods are also referred to as indirect approaches to outlier detection, as their primary objective is to safeguard the robustness of model parameters, while the identification of suspicious observations typically occurs as a by-product -- often based on residual analysis [cf. \citet[p. 8]{huber2009}; \citet[p. 1265]{Hadi1993}]. 


Particularly in regression analysis, the use of robust methods has grown substantially since the 1960s, accompanied by the development of essential criteria that robust estimators are expected to meet [cf. \citet[p. 179]{Fahrmeir2021}].
According to \citet[p. 5]{huber2009}, a well-designed robust procedure is characterized by three desirable properties:
\begin{enumerate}
    \item \textit{Efficiency}: When all model assumptions are fulfilled, the estimator should deliver optimal or near-optimal results. 
    \item \textit{Stability}: The performance should only be marginally affected by smaller deviations from the assumed model [cf. \citet[p. 2]{stuart2011}].
    \item \textit{Breakdown}: The breakdown point of an estimator is defined as the smallest fraction of bad observations that can make the estimator’s bias arbitrarily large [cf. \citet[p. 434]{beliakov2012}; \citet[p. 8]{huber2009}; \citet[pp. 9--10]{rousseeuw1987}]. 
    Thus, an estimator should have a high breakdown point to prevent larger deviations from the assumed model leading to a collapse.
\end{enumerate}
All three of these statistical properties are important, but robustness is inherently based on compromise [cf. \citet[pp. 64--65]{Maronna2019}]. 
Protecting performance against model deviations often comes at the cost of sacrificing some efficiency [cf. \citet[p. 5]{huber2009}]. 
In the academic literature, several robust alternatives to \gls{OLS} were suggested that deal with this trade-off in different ways [cf. \citet[pp. 441--445]{Verardi2009}]. 
For example, popular choices are the Median Regression, M-Estimators, S-Estimators, MM-Estimators, Huber Regression, \gls{RANSAC}, the Minimum-Volume-Ellipsoid estimator, and \gls{TS} estimator [cf. \citet[pp. 1014--1016]{QU2021}; \citet[p. 367]{Huang2016}; \citet[p. 353]{ahipacsaouglu2015}; \citet[p. 451]{yu2014}; \citet[p. 1836]{Shen2013}; \citet[pp. 194--203]{Wilcox2010}; \citet[p. 61]{owen2007}; \citet[p. 1]{Subbarao2006}]. 
Beyond these approaches, one of the most widely applied robust estimators is \gls{LTS} as it can obtain the highest possible breakdown point, and has a better statistical efficiency than competing methods [cf. \citet[p. 227]{Alfons2013}; \citet[p. 15]{rousseeuw1987}].
Thus, this paper focuses on the practical application of this estimator, which is discussed in more detail in the next subsection.


\subsection{Least Trimmed Squares: Challenges and Heuristic Approaches}\label{sec:LTS_Challenges_Heuristic}


Originally proposed by \citet{rousseeuw1984}, the main idea of \gls{LTS} is to compute $\widehat{\boldsymbol{\beta}}$ by only considering the sum of the $h \in \mathbb{N}$ (with $h < n$) smallest squared residuals, i.e.,
\begin{align}\label{eq:Classic_LTS}
    \underset{\boldsymbol{\beta}}{\text{minimize}} \ q \left(\boldsymbol{\beta} \right) := \sum_{i=1}^h (r^2)_{i:n},
\end{align}
where $\frac{n}{2} \leq h < n$, $q: \mathbb{R}^p \rightarrow \mathbb{R}_+$, and $(r^2)_{1:n} \leq \cdots \leq (r^2)_{n:n}$ are the ordered squared estimation errors. 
Thus, the estimation procedure is in line with \eqref{eq:classic_OLS} as soon as the subset of $h$ observations is identified. 
In order to obtain this subset, the most na\"ive approach would be to construct all possible subsets of $h$ instances, and compare their objective function values, which is also known as brute-force search [cf. \citet[p. 1044]{GILONI2002}]. 
In general, this demonstrates that a solution to problem \eqref{eq:Classic_LTS} always exist, but it also emphasizes that \gls{LTS} is a combinatorial problem that is NP-hard [cf. \citet[p. 14]{bernholt2006}; \citet[p. 30]{rousseeuw2006}]. 
As a result, the brute-force search becomes intractable even in moderate dimensions due to the demanding computational complexity [cf. \citet[p. 150]{mount2014}]. 


To make the \gls{LTS} estimator practically applicable, for more than 40 years both practitioners and theoreticians have sought to develop algorithms that reduce computation time while still producing solutions that are either optimal or locally optimal, and computationally efficient [cf.~\citet{FLORES2015}; \citet{mount2014}; \citet{Kan2013}; \cite{Shen2013}; \citet{Satman2012}; \citet{Hofmann2010}; \citet{NGUYEN2010}; \citet{Zioutas2009}; \citet{rousseeuw2006}; \citet{GILONI2002}; \citet{AGULLO2001}; \citet{atkinson1999}; \citet{HAWKINS1999}; \citet{HOSSJER1995}; \citet{HAWKINS1994}]. 
To the best of our knowledge, the fastest currently available exact algorithm solves the \gls{LTS} problem globally in $\mathcal{O}(n^{p+1})$ time [cf. \citet[p.~149]{mount2016}]. 
While this represents an improvement over brute-force search, it remains computationally expensive for even moderate values of $n$ and $p$. 
As a result, the \gls{LTS} problem can still only be solved exactly for small datasets. 
In practice, it is typically approached using heuristics, which yield non-deterministic outcomes and almost always converge to at most locally optimal solutions [cf. \citet[p. 3]{XIE2022}; \citet[p. 813]{SudermannMerx2021}].


The most prominent and frequently applied heuristic to approximately solve the \gls{LTS} problem -- with implementations in statistical software like \href{https://www.sas.com/en_gb/home.html}{SAS} or \href{https://www.r-project.org/}{R} -- is called Fast-\gls{LTS} [cf. \citet[p.~4]{SMITI2020}; \citet[p. 150]{mount2014}; \citet[p. 3214]{NGUYEN2010}]. 
The approach was introduced by \citet{rousseeuw2006}, and provides a robust and fast framework to estimate the regression coefficients in the presence of outliers. 
Within the heuristic, the purpose of each iteration is to update the regression coefficients based on the $h$ instances that provide the smallest squared residuals. 
This updating step is repeated as long as the subset of $h$ observations continues to change. 
Since the procedure involves only a simple sorting of residuals and basic matrix calculus, each iteration can be executed at a very low computational cost, provided that an initial starting point has been selected. 
Theoretically, \citet[pp.~32--33]{rousseeuw2006} showed that this idea results in a monotonously decreasing sequence $\{[q \left(\boldsymbol{\beta} \right)]_k\}$ which is bounded from below and converges in finite steps. 
To find smaller objective function values, the core procedure is repeated based on 500 different initial regression lines, and in the end, only the solution that provided the lowest $q \left (\boldsymbol{\beta} \right )$ is kept [cf. \citet[pp.~32--33]{rousseeuw2006}]. 
For larger choices of $n$, the latter is also combined with random sampling that creates a nested system of subsets, and which ensures that the computation time remains manageable [cf.~\citet[pp. 36--37]{rousseeuw2006}].


Even though Fast-\gls{LTS} often works effectively in practice, certain aspects of the approach still offer room for improvement. 
One of the major drawbacks arises from the theoretical point of view. 
Apart from the locally convergent sequences, \citet{rousseeuw2006} did not derive other theoretical properties (e.g., convergence to a stationary point) that would also hold in a more general framework leading to a categorization as a heuristic [cf. \citet[p. 150]{mount2014}]. 
From a practical perspective, a major drawback lies in the fact that the underlying core procedure is not inherently robust, delivering reliable results only when executed with a substantially large number of starting points [cf. \citet[p. 3]{Heng2025}; \citet[p. 2505]{TORTI2012}]. 
Moreover, as this number remains fixed irrespective of the problem’s dimensionality, we later empirically observe a decline in solution quality, as reflected by higher~objective function values, for moderate to large values of $p$. 
This deterioration can be attributed to the decreasing quality of the initial regression line, which stems from the use of randomly selected $p$-subsets for initialization [cf. \citet[pp. 139--199]{Hawkins2002}]. 
In these cases, users must decide if a faster computation is more important than a higher solution quality. 
In the next subsection, we review alternatives to the Fast-\gls{LTS} heuristic and highlight how mathematical programming can be leveraged to approximately solve the \gls{LTS} problem.


\subsection{Mathematical Programming Formulations of Least Trimmed Squares}


Over the years, the lack of theoretical properties and the large number of required starting points have prompted multiple attempts to challenge Fast-\gls{LTS} in terms of both computation time and objective function value [cf. \citet{zuo2022}; \citet{mount2016}; \citet{mount2014}; \citet{Kan2013}; \citet{Shen2013}; \citet{Satman2012}; \citet{HARRINGTON2010}; \citet{Hofmann2010}; \citet{NGUYEN2010}; \citet{Salibian-Barrera2006}]. 
Although no other approach has yet managed to simultaneously outperform the state-of-the-art in both performance criteria, several researchers have demonstrated that improving one criterion at the expense of the other is achievable. 
A promising research direction involves investigating mathematical programming representations, as they offer various ways to incorporate robustness into the model formulation [cf. \citet{BARBATO2024}; \citet{Wu2023}; \citet{Molybog2020}; \citet{Chen2018}; \citet{Bertsimas2016}]. 


For the \gls{LTS} problem, \citet{Shen2013} and \cite{NGUYEN2010} have already shown that mathematical programming approaches can provide an approximate solution more quickly, with slightly higher objective function values compared to Fast-\gls{LTS}. 
While the former approximately solved the problem with a second-order cone program, the latter achieved the same through a semidefinite program.  
Both techniques are well-studied in the academic literature on mathematical optimization and, as such, offer a broader theoretical foundation and universal validity [cf.~\citet[ch. 13]{Antoniou2021}, \citet[pp. 1--2]{cipolla2024}, \citet{anjos2011}]. 
The common basis for the derivation of both programs is the transformation of the \gls{LTS} problem into a concave minimization over a polytope -- a representation that originally was derived by \citet{GILONI2002}, even prior to the introduction of Fast-\gls{LTS}. 
This formulation then also provides a natural transition to \gls{DC} programming, a broader problem class that includes concave minimization, and which we propose in this paper to challenge the performance of Fast-\gls{LTS} [cf.~\citet[p. 133]{tuy2018}; \citet[p. 27]{LeThi2005}].


In general, \gls{DC} programming has become one of the most widely used approaches in non-convex optimization, and has been extensively studied in the literature [cf. \citet[Section 2, 3, 4]{deOliveira2020}; \citet[p. 133]{tuy2018}; \citet[pp. 70--71]{Horst1995}; \citet[p. 27]{LeThi2005}]. 
Before this programming technique can be applied, a \gls{DC} decomposition of the objective must be derived.
In theory, such representations always exist for continuous optimization problems on compact sets, but they are not uniquely defined and must be tailored to the problem [cf. \citet[pp. 149--150]{Horst1995}]. 
Although several natural candidates for constructing a \gls{DC} decomposition exist for concave minimization problems, they have not yet been proposed for the \gls{LTS} problem. 
To the best of our knowledge, the only known application of \gls{DC} programming in the \gls{LTS} literature appears in \citet[pp. 87--88; 93--98]{Liu2019}.
However, their proposed \gls{DC} decomposition is deduced based on a modified version of the problem, as it also incorporates variable selection through a truncated $\ell_1$ regularizer. 
Moreover, the performance of their proposed algorithm was neither compared to standard approaches such as Fast-\gls{LTS}, nor evaluated on datasets containing bad leverage points -- arguably the most challenging type of outlier [cf. Subsection~\ref{sec:outliers_in_regression}]. 
As a result, the position of their method within the broader landscape of \gls{LTS} algorithms continues to be an open question.
Overall, this highlights a threefold research gap that this paper seeks to address: (i) the absence of a robust algorithm using solely a single starting point (ii) the lack of natural \gls{DC} decompositions grounded in the concave minimization formulation of the classical \gls{LTS} problem; and (iii) the absence of a comprehensive evaluation of \gls{DC} programming in comparison with Fast-\gls{LTS}, particularly on datasets containing bad leverage points.


\subsection{Paper Contributions \& Outline}


In line with the previously identified research gaps, this paper proposes the \gls{sBDCA} to approximately solve the \gls{LTS} problem more effectively (from a single starting point) than existing approaches.
For this purpose, we first derive a powerful \gls{DC} decomposition grounded in the concave minimization formulation of the \gls{LTS} problem.
This results in a linearly constrained \gls{DC} program, which, to the best of our knowledge, has not been considered for this problem before. 
Afterwards, the \gls{sBDCA} is proposed as a solution method that allows for a different \gls{DC} decomposition at each iteration and includes a simplified line search procedure. 
Theoretically, we then derive the convergence rate to a local solution of the problem by proving that the objective function is a multivariate polynomial fulfilling the \L ojasiewicz property. 
To ensure robustness from a single initialization, we also derive and incorporate a problem-specific preconditioning matrix into the \gls{sBDCA} framework, which aims to better balance the influence of regression outliers and leverage points on the optimization result.
In the practical part of this paper, we then conduct extensive numerical experiments to compare the performance of our proposed algorithm with Fast-\gls{LTS} based on multiple synthetic and real-world datasets, and different strategies to create starting points. 
Overall, these comparisons demonstrate that \gls{sBDCA} with preconditioning (i) delivers more robust results under the considered initialization schemes, (ii) is up to 3.25 times faster than the state-of-the-art, and (iii) reduces objective function values by up to 90\%, particularly in settings with many independent variables.


The remaining sections of this paper are structured as follows. 
In Section \ref{sec:LTS_as_DC}, we demonstrate how the \gls{LTS} problem can be reformulated as a concave minimization over a polytope. 
Based on this concave representation, we deduce several natural \gls{DC} decompositions for the \gls{LTS} problem.
In Section \ref{sec:DC_Programming_for_LTS}, we then propose the \gls{sBDCA} and derive the convergence rate to a local solution of the problem. 
Afterwards, Section~\ref{sec:Application_to_Robust_Regression} compares the performance of our algorithm with Fast-\gls{LTS} and other robust estimators based on numerical experiments with synthetic and real-world data. 
Subsequently, Section~\ref{sec:conclusions} summarizes all results.


\section{Least Trimmed Squares as Difference of Convex Functions}\label{sec:LTS_as_DC}



In this section, a powerful \gls{DC} representation for the classical \gls{LTS} problem is derived, which, to the best of our knowledge, has not been considered before. 
For this purpose, we recall the steps proposed by \citet{GILONI2002} to transform the \gls{LTS} problem into a concave minimization over a polytope. 
At the beginning, we observe that one of the most natural options to reformulate problem~\eqref{eq:Classic_LTS} as a mathematical program with constraints is the introduction of indicator variables $\mathbf{z} := \begin{bmatrix} z_1, \ldots, z_n\end{bmatrix}^\top \in \{0, 1\}^n$. 
These variables then can be used to construct a \gls{MIPP} of the form
\begin{align}\label{eq:MIP}
    \begin{split}
        \underset{\boldsymbol{\beta}, \mathbf{z}}{\text{minimize}} \quad & \Bar{f}(\mathbf{z}, \boldsymbol{\beta}) := \sum_{i=1}^n z_i \cdot (y_i - \mathbf{x}_i \boldsymbol{\beta})^2 \\
        \text{subject to} \quad & \sum_{i=1}^n z_i = h, \\
         & z_i \in \{0, 1\} \quad \forall i \in \{1, \ldots, n\},
    \end{split}
\end{align}
where $\mathbf{z}$  determines which residuals are included in the summation, and $\Bar{f}: \mathbb{R}^n \times \mathbb{R}^p \rightarrow \mathbb{R}_+$ is a third-order multivariate polynomial that is bounded from below by $0 \leq \Bar{f}(\mathbf{z}, \boldsymbol{\beta})$. 
In general, such problem types are NP-hard and therefore are  difficult to solve exactly [cf. \citet[pp.~205--208]{ZHANG2023}]. 
However, for the specific \gls{MIPP} presented in \eqref{eq:MIP}, it can be shown that relaxing the integrality constraint does not alter the optimal solution, as we will present next. 


In order to transform problem \eqref{eq:MIP} into a \gls{CNPP}, we replace the integer constraint $z_i \in \{0, 1\}$ with two inequality constraints, $0 \leq z_i $ and $ z_i \leq 1$ for all $i \in \{1, \ldots, n\}$,  such that $z_1, \ldots, z_n$ become continuous variables on the interval $[0, 1]$. 
The resulting mathematical program then has the form 
\begin{align}\label{eq:CNLPP}
    \begin{split}
        \underset{\boldsymbol{\beta},  \mathbf{z}}{\text{minimize}} \quad & \Bar{f}   \left(\mathbf{z}, \boldsymbol{\beta} \right) \\
        \text{subject to} \quad &  \mathbf{z} \in \Delta_h,
    \end{split}
\end{align}
where 
\begin{equation}
    \Delta_h := \Bigl\{ \mathbf{z} = \begin{bmatrix} z_1, \ldots, z_n \end{bmatrix}^\top \in [0, 1]^n  \ \bigl| \  \textstyle \sum_{i=1}^n z_i = h \Bigr\}
\end{equation}
denotes the feasible set.
The latter set, known as the capped simplex, consists of $n$ box constraints restricting all weights $z_i$ to the closed interval $[0, 1]$, and one equality constraint ensuring that the weights sum up to $h$ [cf. Appendix \ref{sec:feasible_set} for a visualization]. 
For this specific \gls{CNPP}, \citet[p. 3215]{NGUYEN2010} showed that all local and global solutions $(\mathbf{z}^*, \boldsymbol{\beta}^*)$ lie at extreme points of the feasible region.
A central component of their proof is the observation that the solution set of the \gls{MIPP} is a subset of that of the \gls{CNPP}, which implies that the optimal objective function value of the \gls{MIPP} cannot be lower than that of the \gls{CNPP}. 
Furthermore, any non-integer solution to the \gls{CNPP} can be transformed into an integer solution by selecting the $h$ smallest squared residuals. 
Note that in the presence of ties, multiple such subsets exist, and any of them can be chosen. 
Since the transformation involves selecting the smallest $h$ residuals, the resulting objective function value is necessarily less than or equal to that of the original non-integer solution. 
Based on these results, it then follows that the optimal solutions of the \gls{MIPP} and \gls{CNPP} must coincide and it can be established that the integrality constraints are not required to obtain an optimal solution to the problem. 
However, while this relaxation yields a continuously differentiable objective function, the structure remains non-concave, requiring an additional transformation step.


To ultimately convert problem~\eqref{eq:CNLPP} into a concave minimization, we must also reduce the number of decision variables in the objective function $\Bar{f}$. 
For this purpose, we reformulate the problem based on an inner and outer minimization of the form
\begin{align}\label{eq:double_minimization}
    \begin{split}
        \underset{\mathbf{z} \in \Delta_h}{\text{minimize}} \  \ & \biggl ( \underset{\boldsymbol{\beta}}{\text{minimize}} \sum_{i=1}^n z_i \cdot (y_i - \mathbf{x}_i \boldsymbol{\beta})^2 \biggl ),
    \end{split}
\end{align}
where the inner problem corresponds to a quadratic function that can be solved in closed form [cf. \citet[pp. 81--82]{boyd2009}]. 
In particular, let us assume that for the outer problem the solutions for $z_1, \ldots, z_n$ are already known and are stored in the diagonal matrix $\mathbf{Z} := \text{diag}(\mathbf{z}) = \text{diag}(z_1, \ldots, z_n) \in \mathcal{M}_{n \times n}(\mathbb{R})$. 
Then, the inner minimization problem of \eqref{eq:double_minimization} can be rewritten as
\begin{equation}
  \underset{\boldsymbol{\beta}}{\text{minimize}} \quad (\mathbf{y} - \mathbf{X} \boldsymbol{\beta})^\top \mathbf{Z} (\mathbf{y} - \mathbf{X} \boldsymbol{\beta}),
\end{equation}
which has the form of a \gls{WLS} problem [cf. \citet[pp.~193--195]{Fahrmeir2021}]. 
This implies that there exists a closed-form solution for the regression coefficients, given by $ \widehat{\boldsymbol{\beta}}_{\text{\gls{WLS}}} := (\mathbf{X}^\top \mathbf{Z} \mathbf{X})^{-1} \mathbf{X}^\top \mathbf{Z}\mathbf{y} \in \mathbb{R}^p$. 
The latter solution then allows us to reduce the number of input variables in the objective function, resulting in $f: \mathbb{R}^n \rightarrow \mathbb{R}_+$ of the form 
\begin{align}\label{eq:concave_objective_func}
    \begin{split}
        f \left(\mathbf{z} \right) &:= \sum_{i=1}^n z_i \cdot (y_i - \mathbf{x}_i (\mathbf{X}^\top \mathbf{Z} \mathbf{X})^{-1} \mathbf{X}^\top \mathbf{Z}\mathbf{y})^2 
        = \mathbf{y}^\top \mathbf{Z} \mathbf{y} - \mathbf{y}^\top \mathbf{Z} \mathbf{X} (\mathbf{X}^\top \mathbf{Z}\mathbf{X})^{-1} \mathbf{X}^\top \mathbf{Z} \mathbf{y},
    \end{split}
\end{align}
which is a concave function depending only on $\mathbf{z}$ [cf. Definition~\ref{defi:convex_function} and Theorem~\ref{theorem:concave_objective}]. 
Although concave minimization problems generally remain NP-hard -- and even seemingly simple cases can have an exponential number of local minima -- they are typically more tractable than general non-convex optimization problems, due to special mathematical properties [cf. \citet[p. 43]{Horst1995}]. 
A variety of methods have been proposed to approximately solve them, among which \gls{DC} programming has become a popular choice due to its computational efficiency.



\begin{defi}[{Convex Function {\small [cf. \citet[ch. 3]{boyd2009}]}}]{defi:convex_function}
    A function $f: \mathbb{R}^n \rightarrow \mathbb{R}$ is said to be \textbf{convex} (with modulus $\rho = 0$) or \textbf{strongly convex} (with modulus $\rho > 0$) if
        \begin{equation*}
            f(\lambda \cdot \mathbf{z}_1 + (1 - \lambda) \cdot \mathbf{z}_2) \leq \lambda \cdot f(\mathbf{z}_1) + (1 - \lambda) \cdot f(\mathbf{z}_2) - \frac{\rho}{2} \cdot \lambda \cdot (1 - \lambda) \cdot \Vert \mathbf{z}_1 - \mathbf{z}_2 \rVert^2
        \end{equation*}
    holds for all choices of  $\mathbf{z}_1, \mathbf{z}_2 \in \mathbb{R}^n$  and $\lambda \in (0, 1)$. In contrast, a function $f$ is said to be \textbf{(strongly) concave} if $-f$ is (strongly) convex.
\end{defi}


\begin{theo}{theorem:concave_objective}
    The objective function $f(\mathbf{z})$ shown in \eqref{eq:concave_objective_func} is concave, twice continuously differentiable and bounded from below by zero.
\end{theo}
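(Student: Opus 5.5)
The plan is to write $f$ as a pointwise infimum of functions that are affine in $\mathbf{z}$. Concavity then follows at once, and smoothness and the lower bound are read off the closed form in \eqref{eq:concave_objective_func}.

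The first step is to fix the domain on which the statement is meaningful, which I expect to be the main subtlety. The closed form needs $\mathbf{X}^\top \mathbf{Z}\mathbf{X}$ to be invertible, and this fails for some $\mathbf{z}$ (for instance $\mathbf{z}=\mathbf{0}_n$). I will therefore work on
\begin{equation*}
    \mathcal{U} := \bigl\{ \mathbf{z} \in \mathbb{R}^n \ \bigl| \ \mathbf{X}^\top \operatorname{diag}(\mathbf{z}) \mathbf{X} \succ 0 \bigr\}.
\end{equation*}
The map $\mathbf{z} \mapsto \mathbf{X}^\top \operatorname{diag}(\mathbf{z})\mathbf{X} = \sum_i z_i \mathbf{x}_i^\top \mathbf{x}_i$ is linear, and the cone of positive definite matrices is open and convex. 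Hence $\mathcal{U}$ is open and convex, so concavity and differentiability on it are well posed. I will add the standing assumption, implicit in the paper's use of $\widehat{\boldsymbol{\beta}}_{\text{WLS}}$, that $\Delta_h \subset \mathcal{U}$. A sufficient condition is that every $h$ rows of $\mathbf{X}$ have rank $p$. The reason is that for $\mathbf{z}\in\Delta_h$ we have $z_i \le 1$ and $\sum_i z_i = h$, so at least $h$ entries of $\mathbf{z}$ are strictly positive.

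For concavity, fix $\mathbf{z} \in \mathcal{U}$. Then $\boldsymbol{\beta} \mapsto \sum_i z_i (y_i - \mathbf{x}_i\boldsymbol{\beta})^2$ is a strictly convex quadratic, and its unique minimizer is $\widehat{\boldsymbol{\beta}}_{\text{WLS}}(\mathbf{z})$. This gives
\begin{equation*}
    f(\mathbf{z}) = \min_{\boldsymbol{\beta}\in\mathbb{R}^p} \; g_{\boldsymbol{\beta}}(\mathbf{z}), \qquad g_{\boldsymbol{\beta}}(\mathbf{z}) := \sum_{i=1}^n z_i (y_i - \mathbf{x}_i \boldsymbol{\beta})^2 .
\end{equation*}
Substituting the minimizer and expanding yields the second expression in \eqref{eq:concave_objective_func}; this is routine algebra. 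For each fixed $\boldsymbol{\beta}$, the function $g_{\boldsymbol{\beta}}$ is linear in $\mathbf{z}$. Take $\mathbf{z}_1,\mathbf{z}_2\in\mathcal{U}$ and $\lambda\in(0,1)$. Then
\begin{equation*}
    g_{\boldsymbol{\beta}}(\lambda\mathbf{z}_1+(1-\lambda)\mathbf{z}_2) = \lambda g_{\boldsymbol{\beta}}(\mathbf{z}_1)+(1-\lambda)g_{\boldsymbol{\beta}}(\mathbf{z}_2) \ge \lambda f(\mathbf{z}_1)+(1-\lambda) f(\mathbf{z}_2).
\end{equation*}
Taking the minimum over $\boldsymbol{\beta}$ on the left shows that $-f$ satisfies Definition~\ref{defi:convex_function} with $\rho=0$. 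So $f$ is concave on $\mathcal{U}$, and in particular on $\Delta_h$.

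For smoothness, the entries of $\mathbf{y}^\top\mathbf{Z}\mathbf{y}$, $\mathbf{X}^\top\mathbf{Z}\mathbf{y}$ and $\mathbf{X}^\top\mathbf{Z}\mathbf{X}$ are linear in $\mathbf{z}$. By Cramer's rule, the entries of $(\mathbf{X}^\top\mathbf{Z}\mathbf{X})^{-1}$ are rational functions whose denominator $\det(\mathbf{X}^\top\mathbf{Z}\mathbf{X})$ does not vanish on $\mathcal{U}$. Hence $f$ is a rational function without poles on the open set $\mathcal{U}$. It is therefore $C^\infty$ there, and in particular twice continuously differentiable. If needed, the gradient can be given explicitly: by the envelope theorem, $\partial f/\partial z_i = (y_i - \mathbf{x}_i\widehat{\boldsymbol{\beta}}_{\text{WLS}}(\mathbf{z}))^2$.

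For the lower bound, on $\Delta_h$ every $z_i \ge 0$, so $f(\mathbf{z}) = \sum_i z_i (y_i-\mathbf{x}_i\widehat{\boldsymbol{\beta}}_{\text{WLS}})^2 \ge 0$. I will state explicitly that this bound is meant on the feasible set, or on $\mathcal{U}\cap\mathbb{R}^n_+$. Outside the nonnegative orthant, negative weights can make $f$ negative, so the claim $f:\mathbb{R}^n\to\mathbb{R}_+$ has to be read with this domain caveat.
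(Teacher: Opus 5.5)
Your proposal is correct and is essentially the argument the paper relies on. The paper's proof only points to references, and the cited passage of Boyd and Vandenberghe (pp.~81--82) proves concavity of the weighted least-squares cost exactly as you do: as a pointwise infimum over $\boldsymbol{\beta}$ of functions that are linear in the weights. The paper leaves the smoothness, the lower bound and the domain implicit. Your restriction to the open convex set $\mathcal{U}$ where $\mathbf{X}^\top \mathbf{Z}\mathbf{X} \succ 0$ (with $\Delta_h \subset \mathcal{U}$ under your rank condition), the rational-function argument for $C^\infty$ smoothness, and the remark that $f \geq 0$ only holds for $\mathbf{z} \geq \mathbf{0}_n$ are therefore valid completions of what the paper leaves unstated, not a different route.
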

\begin{prf}{prf:concave_objective}
    Different approaches to prove Theorem~\ref{theorem:concave_objective} are shown in \citet[p. 1838]{Shen2013}, \citet[p. 3216]{NGUYEN2010} or \citet[pp. 81--82] {boyd2009}. 
\end{prf}

\vspace{-0.2cm}


In order to apply \gls{DC} programming to approximately solve the \gls{LTS} problem, it remains to derive a corresponding \gls{DC} representation for the objective function given in~\eqref{eq:concave_objective_func}. 
In theory, every twice continuously differentiable function defined over a convex set can be decomposed into a \gls{DC} form [cf.~\citet[pp. 709--711]{hartman1959}]. 
However, such decompositions are not uniquely defined and must be individually derived, which can be a challenging task [cf. \citet[p.~534]{leThi2024}; \citet[p.~28]{LeThi2005}; \citet[p.~291]{PhamDinh1997}]. 
Furthermore, the choice of decomposition is crucial for the performance of \gls{DC} programming algorithms [cf.~\citet[pp. 545--546]{leThi2024}; \citet[p. 8]{PhamDinh2014}; \citet[p. 291]{PhamDinh1997}]. 
For example, certain reformulations can significantly improve computation time or the found objective function value [cf. \citet[pp. 545--546]{leThi2024}]. 
Unfortunately, identifying a favorable \gls{DC} decomposition remains one of the major challenges in \gls{DC} programming and largely depends on the specific structure of the problem at hand [cf. \citet[pp. 545--546; 551]{leThi2024}]. 
In general, it is recommended to select a \gls{DC} decomposition that ensures the convex subproblem -- constructed within any \gls{DC} programming algorithm -- can be efficiently solved [cf. \citet[pp. 405--406]{LeThi2000}]. 


For the concave function shown in~\eqref{eq:concave_objective_func}, there appear to be several natural candidates for constructing a \gls{DC} decomposition of the form 
\begin{equation}
    f \left(\mathbf{z} \right) = \phi_{1} \left( \mathbf{z} \right) - \phi_{2} \left( \mathbf{z} \right),
\end{equation} 
where $\phi_1: \mathbb{R}^n \rightarrow \mathbb{R} \cup \{+ \infty\}$ and $\phi_2: \mathbb{R}^n \rightarrow \mathbb{R} \cup \{+ \infty\}$ are proper, closed, and strongly convex functions. 
For example, by introducing a parameter $\rho \in \mathbb{R}_{>0}$ and defining the auxiliary function $\psi \left( \mathbf{z} \right) := \mathbf{z}^\top \mathbf{Q} \mathbf{z}$, with $\mathbf{Q} \in \mathcal{M}_{n\times n}\left( \mathbb{R} \right)$ and $\mathbf{Q} \succeq \rho \cdot \mathbf{I}_n$, three general types of \gls{DC} decompositions can be constructed, with components defined as
\begin{alignat}{2}
    &\text{1. Option:} \ \ \phi_1(\mathbf{z}):= \psi \left( \mathbf{z} \right) + \mathbf{y}^\top \mathbf{Z} \mathbf{y}  &&\quad \text{and} \quad \phi_2(\mathbf{z}) := \psi \left( \mathbf{z} \right) + \mathbf{y}^\top \mathbf{Z} \mathbf{X} (\mathbf{X}^\top \mathbf{Z}\mathbf{X})^{-1} \mathbf{X}^\top \mathbf{Z} \mathbf{y}, \label{eq:dc_1} \\
    &\text{2. Option:}  \ \  \phi_1(\mathbf{z}):= \psi \left( \mathbf{z} \right)  + f(\mathbf{z})  &&\quad \text{and} \quad \phi_2(\mathbf{z}) := \psi \left( \mathbf{z} \right), \\
    &\text{3. Option:}  \  \  \phi_1(\mathbf{z}):= \psi \left( \mathbf{z} \right)    &&\quad \text{and} \quad \phi_2(\mathbf{z}) := \psi \left( \mathbf{z} \right) - f(\mathbf{z}), \label{eq:dc_3}
\end{alignat}
where each pair $(\phi_1, \phi_2)$ satisfies the desired convexity properties if $\mathbf{Q}$ and $\rho$ are chosen appropriately for the corresponding decomposition. 
It is important to note that these \gls{DC} decompositions differ substantially from the formulation proposed by \citet[p. 93]{Liu2019}, in which the $p$ regression coefficients $\beta_1, \ldots, \beta_p$ are treated as decision variables. 
In our practical implementation, we evaluated the corresponding \gls{DC} programs of the form 
\begin{align}\label{eq:general_problem}
    \begin{split}
             \underset{\mathbf{z} \in \Delta_h}{\text{minimize}} \ \ & \phi_{1} (\mathbf{z}) - \phi_{2} (\mathbf{z}),
    \end{split}
\end{align}
resulting from the options shown in~\eqref{eq:dc_1}–\eqref{eq:dc_3} under various choices of $\mathbf{Q}$. 
For the upcoming sections, we ultimately selected the third decomposition with $\mathbf{Q} \equiv \rho \cdot \mathbf{I}_n$, primarily due to the computational efficiency it offers when solving the convex subproblem
\begin{align}\label{eq:general_subproblem}
    \begin{split}
             \underset{\mathbf{z} \in \Delta_h}{\text{minimize}} \ \ & \phi_{1} (\mathbf{z}) - \Bigl ( \phi_2 \left( \mathbf{z}_k \right) + \left(\mathbf{z} - \mathbf{z}_k \right)^\top \nabla \phi_{2} (\mathbf{z}_k) \Bigr),
    \end{split}
\end{align}
where $\mathbf{z}_k \in \Delta_h$ is the solution of the previous iteration. 
Specifically, this choice enables the use of the fast projection procedure introduced by \citet{ang2021}, which is significantly faster than state-of-the-art commercial solvers such as Gurobi.


\section{DC Programming for Least Trimmed Squares}\label{sec:DC_Programming_for_LTS}



After constructing a linearly constrained \gls{DC} program for the \gls{LTS} problem, this section derives the theoretical setup of our novel \gls{DC} programming approach, called \gls{sBDCA}. 
From a theoretical point of view, the algorithm combines the advantages of \gls{BDCA} and \gls{DCA} with successive \gls{DC} decompositions [cf. \citet{Thormann2024}; \citet{aragon2022}; \citet{HO2021}; \citet{aragon2020}; \citet{Ho2020};  \citet{leThi2019}; \citet{aragon2018}; \citet{leThi2018}]. 
In comparison to \eqref{eq:general_problem}, \gls{sBDCA} is based on an iteration-specific \gls{DC} program of the form
\begin{align}\tag{$\mathcal{P}_k$}\label{eq:LTS_as_DC}
    \begin{split}
        \underset{\mathbf{z} \in \Delta_h}{\text{minimize}} \quad &  \phi_{1, k}(\mathbf{z}) - \phi_{2, k}(\mathbf{z}), \\
    \end{split}
\end{align} 
where $k \in \{0,\ldots, K\}$ denotes the iteration number with $K \in \mathbb{N}$.
The components of the different \gls{DC} decomposition are given by $\phi_{1, k} \left(\mathbf{z} \right) := \rho_k \cdot \lVert \mathbf{z} \rVert^2$ and $\phi_{2, k} \left(\mathbf{z} \right) := \rho_k \cdot \lVert \mathbf{z} \rVert^2 - f \left(\mathbf{z}\right) $ with varying convexity parameter $\rho_k \in \mathbb{R}_{>0}$. 
At each iteration, this setup is then used to construct a convex approximation of \eqref{eq:LTS_as_DC}, whose solution is utilized to perform a simplified line search afterwards. 
In the first subsection, the proposed optimization procedure is described in more detail, and the convergence rate to a local solution of the \gls{LTS} problem is derived. 
The second subsection then compares the influence of fixed and successive \gls{DC} decompositions on the algorithmic output. 
In the last subsection, different initialization strategies are presented, and a preconditioner for the gradient of the objective function is proposed to reduce the variability of the solution quality and to find solutions with smaller objective function values. 



\subsection{The successive Boosted Difference of Convex Functions Algorithm}\label{sec:sBDCA}


Building on the \gls{DC} formulation of the \gls{LTS} problem in \eqref{eq:LTS_as_DC},  Algorithm~\ref{alg:sBDCA} shows the procedure of \gls{sBDCA} for the specific problem at hand.
To initialize the algorithm, the user must provide a feasible starting point as well as parameters for the \gls{DC} decompositions and termination condition. 
Afterwards, the \gls{sBDCA} can be applied, where each iteration is separated into two parts: (i) the \gls{DCA} step and (ii) the line search procedure.
In general, both parts build on each other and are iteratively executed until the algorithm reaches convergence.
At each iteration, the \gls{DCA} step is based on solving a strongly convex subproblem of the form
\begin{align}\tag{$\tilde{\mathcal{P}}_k$}\label{eq:subproblem_lts}
    \begin{split}
             \underset{\mathbf{z} \in \Delta_h}{\text{minimize}} \ \ & \Tilde{f}_k \left( \mathbf{z} \right) :=  \phi_{1,k} (\mathbf{z}) - \Bigl( \phi_{2, k}(\mathbf{z}_k) + (\mathbf{z} - \mathbf{z}_k) ^\top \nabla \phi_{2, k} (\mathbf{z}_k) \Bigr),
    \end{split}
\end{align}
to obtain the unique solution $\Tilde{\mathbf{y}}_k \in \mathbb{R}^n$.
This subproblem is an approximation of the original objective in \eqref{eq:LTS_as_DC}, where the concave part is replaced with its affine majorization at the $k^{\text{th}}$ iteration point~$\mathbf{z}_k$.
In Algorithm~\ref{alg:sBDCA}, the unique solution $\Tilde{\mathbf{y}}_k$ is obtained by solving
\begin{align}\label{eq:projection_onto_capped_simplex}\tag{$\mathcal{P}_{\Delta_h}$}
 \begin{split}
      P_{\Delta_h} \left ( \Tilde{\mathbf{z}}_k \right) := \underset{\mathbf{z} \in \Delta_h}{\text{argmin}} \ \  \tfrac{1}{2} \cdot \bigl \lVert \mathbf{z} - \Tilde{\mathbf{z}}_k  \bigr \rVert^2,
 \end{split}
\end{align}
where $\Tilde{\mathbf{z}}_k := \mathbf{z}_k - \eta_k \cdot \nabla f(\mathbf{z}_k)$ with $\eta_k := \frac{1}{2 \cdot \rho_k}$.
This projection of $\Tilde{\mathbf{z}}_k$ onto the capped simplex can be efficiently computed using algorithms from the literature and also yields the unique solution~$\Tilde{\mathbf{y}}_k$.
Specifically, Appendix~\ref{sec:projection_onto_capped_simplex} shows that the optimal solution of \eqref{eq:projection_onto_capped_simplex} coincides with that of~\eqref{eq:subproblem_lts}, and provides further details regarding the projection step.



\begin{algorithm}[H]
    \vspace{0.1cm}
  \KwInput{Initial $\mathbf{z}_0 \in \Delta_h$, $\varepsilon \geq 0$, $K \in \mathbb{N}$, $\rho_0, \rho_{\text{min}}, \rho_{\text{max}}, \nu > 0$} 
  \vspace{0.1cm}
  $k \gets 0$ \\
  \vspace{0.1cm}
  \For{$k \leq K$}
   {
   \vspace{0.1cm}
   $\eta_k \gets \frac{1}{2 \cdot \rho_k}$ \\
   \vspace{0.1cm}
   $\Tilde{\mathbf{y}}_k \gets P_{\Delta_h} \Bigl( \mathbf{z}_k - \eta_k \cdot \nabla f(\mathbf{z}_k) \Bigr)$ \\
    \vspace{0.1cm} \label{line:DCA_Step}
    $\mathbf{d}_k \gets \Tilde{\mathbf{y}}_k - \mathbf{z}_k$ \label{line:LS_begin} \\
    \vspace{0.1cm}
    \If{$\lVert \mathbf{d}_k \rVert \leq \varepsilon$}
    {\vspace{0.1cm}
    \textbf{break}
    }
    \vspace{0.1cm}
    $\bar{\lambda}_k \gets  \underset{( \Tilde{\mathbf{y}}_k + \bar{\lambda} \cdot \mathbf{d}_k  ) \in \Delta_h}{\text{arg max}} \  \bar{\lambda}$ \\
    \vspace{0.1cm}
    $\mathbf{z}_{k + 1} \gets \Tilde{\mathbf{y}}_k + \bar{\lambda}_k \cdot \mathbf{d}_k$ \label{line:LS_end} \\
    \vspace{0.1cm}
    $\rho_{k + 1} \gets \min \Bigl \{ \max  \bigl( \nu \cdot \rho_{k}, \rho_{\text{min}} \bigr), \rho_{\text{max}} \Bigr \} $ \label{line:rho_k}\\
    \vspace{0.1cm}
    $k \gets k + 1$ \\
   }
    \vspace{0.1cm}
   \KwOutput{$\mathbf{z}_k$}
   \vspace{0.1cm}
\caption{\acrfull{sBDCA}}\label{alg:sBDCA}
\end{algorithm} 



Once the unique solution $\Tilde{\mathbf{y}}_k$ has been computed, it is used to perform the simplified line search.
For this purpose, a descent direction $\mathbf{d}_k := \Tilde{\mathbf{y}}_k - \mathbf{z}_k$ is calculated. 
Subsequently, the line search is executed along this direction, where the additional parameter~$\Bar{\lambda}_k$ is determined using the boundary of the feasible set and the decision variables are directly updated based on $\mathbf{z}_{k + 1} = \Tilde{\mathbf{y}}_k + \bar{\lambda}_k \cdot \mathbf{d}_k$.
In comparison to previous \gls{BDCA} versions, this update step has changed as it no longer scales down $\bar{\lambda}_k$ based on a backtracking procedure using a Armijo-type condition of the form
$ f\left( \Tilde{\mathbf{y}}_k + \bar{\lambda}_k \cdot \mathbf{d}_k \right) > f\left( \Tilde{\mathbf{y}}_k \right) - \bar{\alpha} \cdot \bar{\lambda}_k^2 \cdot \lVert \mathbf{d}_k \rVert^2$, where $\bar{\alpha} \in \mathbb{R}_{>0}$.
However, since the objective function in \eqref{eq:concave_objective_func} is concave, the following result can be obtained
\begin{align}\label{eq:rho_in_Armijo}
    \begin{split}
        f \left(\Tilde{\mathbf{y}}_k + \bar{\lambda}_k \cdot \mathbf{d}_k \right) &\leq f \left(\Tilde{\mathbf{y}}_k \right) +   \left( \Tilde{\mathbf{y}}_k + \bar{\lambda}_k \cdot \mathbf{d}_k - \Tilde{\mathbf{y}}_k \right)^\top \nabla f \left( \Tilde{\mathbf{y}}_k \right) \\
        &= f \left(\Tilde{\mathbf{y}}_k \right) + \bar{\lambda}_k   \cdot \nabla_{\mathbf{d}_k} f \left( \Tilde{\mathbf{y}}_k \right) \\
        &\leq f \left(\Tilde{\mathbf{y}}_k \right) - 2 \cdot \bar{\lambda}_k  \cdot \rho_k \cdot \lVert \mathbf{d}_k \rVert^2,
    \end{split}
\end{align}
where the first inequality is derived from the concavity of $f$, and the last inequality follows from $\mathbf{d}_k$ being a descent direction [cf. Theorem~\ref{theo:algorithmic_properties_sBDCA}]. 
This shows that for all choices of $\bar{\alpha}$ satisfying $0 < \bar{\alpha} \leq 2 \cdot \rho_k \cdot (\bar{\lambda}_k)^{-1}$, the line search procedure of previous \gls{BDCA} versions directly terminates at the boundary of the feasible set. 
Based on these insights, we decided to use the simplified update step for Algorithm~\ref{alg:sBDCA}, as it can be performed free of computational cost and ensures the maximum possible reduction of the objective along the descent direction $\mathbf{d}_k$.


After proposing the general procedure of \gls{sBDCA}, we now derive algorithmic properties and deduce the convergence rate of the sequence $\{ \mathbf{z}_k \}$ to a local solution of the \gls{LTS} problem.
Note that as the objective function is concave, convergence to a local solution is the best that can be expected for the problem at hand [cf. \citet[p. 1048]{GILONI2002}]. 
Despite this limitation, the convergence rate remains a key performance indicator of an algorithm [cf. \citet[p. 619]{Nocedal2006}]. 
For the state-of-the-art, Fast-\gls{LTS}, \citet{rousseeuw2006} neither derived an explicit convergence rate nor proved convergence to a local solution, but concluded that the heuristic terminates after a finite number of steps.
In \gls{DC} programming, the convergence rate remains an active area of research, as it must be assessed on a case-by-case basis depending on problem-specific properties [cf. \citet[p. 53]{leThi2018}].
To establish the convergence rate of \gls{sBDCA} for the \gls{LTS} problem, we leverage the results of Theorem~\ref{theo:cayley_hamilton}, which states that every square matrix satisfies its own characteristic polynomial.
The result is also known as the Cayley-Hamilton Theorem, and can be used, for example, to compute powers of square matrices or to solve binomial matrix equations [cf. \citet{Pop2017}].
In this paper, we use Theorem~\ref{theo:cayley_hamilton} to derive an alternative expression of the inverse of $\mathbf{X}^\top \mathbf{Z} \mathbf{X}$, which in turn allows us to establish that the objective in \eqref{eq:concave_objective_func} is a multivariate polynomial. 


\vspace{0.2cm}

\begin{theo}[{Cayley–Hamilton {\small [cf. \citet[p. 328]{Brualdi1991}]}}]{theo:cayley_hamilton}
    Let $\mathbf{A} \in \mathcal{M}_{{p \times p}}(\mathbb{R})$ be a square matrix with $p \in \mathbb{N}$, and $b_0, \ldots, b_{p-1} \in \mathbb{R}$. Then it holds that
    \begin{equation*}
        \mathbf{A}^{p} + \sum_{i=1}^{p-1} b_{i} \cdot \mathbf{A}^{i} + b_0 \cdot \mathbf{I}_p = \mathbf{0}_{p \times p} \in \mathcal{M}_{{p \times p}}(\mathbb{R}),
    \end{equation*}
    where $p_{\mathbf{A}}(\lambda) := \text{det} \left( \lambda \cdot \mathbf{I}_p - \mathbf{A} \right) = \lambda^p + \sum_{i=1}^{p-1} b_{i} \cdot \lambda^{i} + b_0$ is the characteristic polynomial of~$\mathbf{A}$.
\end{theo}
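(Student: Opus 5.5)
The plan is to prove the Cayley–Hamilton identity via the adjugate matrix, working over polynomial matrices so that no diagonalizability or field-closure assumptions are needed. For a scalar indeterminate $\lambda$, set $\mathbf{B}(\lambda) := \lambda \cdot \mathbf{I}_p - \mathbf{A}$. The starting point is the adjugate identity $\operatorname{adj}(\mathbf{B}(\lambda)) \cdot \mathbf{B}(\lambda) = \det(\mathbf{B}(\lambda)) \cdot \mathbf{I}_p = p_{\mathbf{A}}(\lambda) \cdot \mathbf{I}_p$. This holds entrywise as an identity of polynomials in $\lambda$, since it is simply Laplace expansion.

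The entries of $\operatorname{adj}(\mathbf{B}(\lambda))$ are $(p-1)\times(p-1)$ minors of $\mathbf{B}(\lambda)$. Each is therefore a polynomial in $\lambda$ of degree at most $p-1$, so we can write
\[
\operatorname{adj}(\mathbf{B}(\lambda)) = \sum_{j=0}^{p-1} \lambda^j \mathbf{C}_j
\]
with constant matrices $\mathbf{C}_j \in \mathcal{M}_{p\times p}(\mathbb{R})$. Multiplying out $\bigl(\sum_j \lambda^j \mathbf{C}_j\bigr)(\lambda \mathbf{I}_p - \mathbf{A})$ and comparing coefficients of $\lambda^i$ with $p_{\mathbf{A}}(\lambda)\mathbf{I}_p = \lambda^p \mathbf{I}_p + \sum_{i=1}^{p-1} b_i \lambda^i \mathbf{I}_p + b_0 \mathbf{I}_p$ gives the system
\[
\mathbf{C}_{p-1} = \mathbf{I}_p, \qquad \mathbf{C}_{i-1} - \mathbf{C}_i \mathbf{A} = b_i \mathbf{I}_p \ (1 \le i \le p-1), \qquad -\mathbf{C}_0 \mathbf{A} = b_0 \mathbf{I}_p.
\]

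The key step is a telescoping argument. Multiply the equation for coefficient $\lambda^i$ on the right by $\mathbf{A}^i$ and sum over $i = 0, \ldots, p$. The left-hand side collapses to $\sum_{i} (\mathbf{C}_{i-1}\mathbf{A}^i - \mathbf{C}_i \mathbf{A}^{i+1})$, with the conventions $\mathbf{C}_{-1} = \mathbf{C}_p = \mathbf{0}$, and this telescopes to $\mathbf{0}_{p\times p}$. The right-hand side is exactly $\mathbf{A}^p + \sum_{i=1}^{p-1} b_i \mathbf{A}^i + b_0 \mathbf{I}_p$, which proves the claim.

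The main obstacle, and the reason for this route, is avoiding the tempting but invalid substitution $\lambda = \mathbf{A}$ directly into $\det(\lambda\mathbf{I}_p - \mathbf{A})$. That substitution is illegitimate because $\lambda$ must commute with the coefficient matrices. The coefficient comparison circumvents the problem: it only ever multiplies on the right by powers of $\mathbf{A}$, which commute with one another, so no commutativity between the $\mathbf{C}_j$ and $\mathbf{A}$ is needed.

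Finally, the coefficients $b_0, \ldots, b_{p-1}$ in the statement should be read as the coefficients of $p_{\mathbf{A}}$, since they are not arbitrary reals. For the paper's later use, the most useful consequence is $b_0 = (-1)^p\det(\mathbf{A})$. Hence, when $\mathbf{A} = \mathbf{X}^\top\mathbf{Z}\mathbf{X}$ is invertible,
\[
\mathbf{A}^{-1} = -b_0^{-1}\Bigl(\mathbf{A}^{p-1} + \sum_{i=1}^{p-1} b_i \mathbf{A}^{i-1}\Bigr).
\]
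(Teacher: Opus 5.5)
Your proof is correct. The adjugate identity holds in $\mathcal{M}_{p\times p}(\mathbb{R}[\lambda])$, and the cofactor degree bound justifies $\operatorname{adj}(\mathbf{B}(\lambda))=\sum_{j=0}^{p-1}\lambda^j\mathbf{C}_j$. The coefficient system is right, including the $\lambda^p$ and $\lambda^0$ equations. Right-multiplying by $\mathbf{A}^i$ and summing telescopes to $\mathbf{0}_{p\times p}$, also in the degenerate case $p=1$.

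The paper gives no argument of its own. It quotes Cayley--Hamilton as a classical fact, cites Brualdi and Ryser, and uses it only as a tool in the proof of Theorem~\ref{theo:algorithmic_properties_sBDCA}(v). Compared with that citation, your route costs a paragraph but is self-contained. It works verbatim over any commutative ring and needs no eigenvalue, triangularization or density argument.

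Two of your closing remarks deserve to be kept. First, reading $b_0,\ldots,b_{p-1}$ as the coefficients of $p_{\mathbf{A}}$ is the right interpretation; the statement as printed introduces them as if they were arbitrary reals.

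Second, your inverse formula makes explicit something the paper's later use overlooks. The coefficients the paper calls $c_j$ are $c_j=-b_{j+1}/b_0$ (with $b_p:=1$). They therefore depend on $\mathbf{A}=\mathbf{X}^\top\mathbf{Z}\mathbf{X}$, hence on $\mathbf{z}$, and carry the factor $1/\det(\mathbf{X}^\top\mathbf{Z}\mathbf{X})$. So Cayley--Hamilton only exhibits $f$ as a polynomial in $\mathbf{z}$ divided by $\det(\mathbf{X}^\top\mathbf{Z}\mathbf{X})$, not as a polynomial of degree at most $p+1$.

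A small instance confirms this. Take $p=2$, rows $(1,0),(1,1),(1,2)$, $\mathbf{y}=(0,0,1)^\top$, $h=2$ and $\mathbf{z}=(1,t,1-t)^\top\in\Delta_2$. Then $f=t(1-t)/(4-2t-t^2)$, which is not a polynomial. Real-analyticity, which is what the \L{}ojasiewicz inequality actually needs, still holds wherever $\mathbf{X}^\top\mathbf{Z}\mathbf{X}$ is nonsingular. The polynomial and degree claims in the paper's proof, however, do not.
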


\vspace{-0.2cm}


Building on the result obtained via the Cayley-Hamilton Theorem, we then demonstrate that the objective also satisfies the requirements of a real-analytic function.
As specified in Definition~\ref{defi:real_analytic}, functions of this class have the property that at every point in their domain, there exists a locally convergent power series representation. 
This is an important property, as it allows for a precise analysis of the convergence behavior of algorithms in mathematical optimization, especially those relying on local information such as gradients.
In particular, the property implies that the function of interest also satisfies the \L{}ojasiewicz property [cf. \citet[p. 67]{Lojasiewicz1965}].
The latter is formalized in Definition~\ref{defi:lojasiewicz_property}, and plays a central role in the convergence analysis of descent algorithms, as it characterizes the rate at which the function's gradient vanishes near a critical point. 
Specifically in \gls{DC} programming, the \L{}ojasiewicz property has a high relevance, since the convergence rates of \gls{DC} algorithms are typically established depending on the \L{}ojasiewicz exponent [cf. \citet[Theorem~1]{aragon2018}]. 


\vspace{0.2cm}

\begin{defi}[{Real-Analytic Functions {\small [cf. \citet[pp. 26--29]{Lewis2023}]}}]{defi:real_analytic}
   The function $f: \mathbb{R}^n \rightarrow \mathbb{R}$ is called \textbf{real-analytic} if at each point $\mathbf{u}_0 \in \mathbb{R}^n$  there exists a convergent power series that converges (absolutely) to $f$ for all $\mathbf{u} \in \mathbb{B} \left ( \mathbf{u}_0, \varepsilon \right)$ with $\varepsilon > 0$.
\end{defi}


\begin{defi}[{\L ojasiewicz Property {\small [cf. \citet[p. 99]{aragon2018}]}}]{defi:lojasiewicz_property}
Let $\varphi: \mathbb{R}^n \rightarrow \mathbb{R}$ be a differentiable function. If for any critical point $\mathbf{z}^* \in \mathbb{R}^n$, there exist constants $M \in \mathbb{R}_{> 0}, \varepsilon \in \mathbb{R}_{> 0}$, and $\vartheta \in [0, 1)$ such that
\begin{align*}
    \begin{split}
        \bigl \lvert \varphi \left(\mathbf{z} \right) - \varphi \left(\mathbf{z}^* \right) \bigr \rvert^{\vartheta} \leq M \cdot \bigl \lVert \nabla \varphi \left( \mathbf{z} \right) \bigr \rVert \quad \quad \text{for all} \ \mathbf{z} \in \mathbb{B} \left(\mathbf{z}^*, \varepsilon \right), 
    \end{split}
\end{align*}
then the function $\varphi$ possesses the \textbf{\L ojasiewicz property} with \textbf{\L ojasiewicz exponent} $\vartheta_{\varphi}$.
\end{defi}

\vspace{-0.2cm}


Based on the preceding results, Theorem~\ref{theo:algorithmic_properties_sBDCA} establishes algorithmic properties of \gls{sBDCA} and the convergence rate of the sequence $\{ \mathbf{z}_k \}$ to a local solution of the \gls{LTS} problem.
In particular, for the fifth bullet point, the \L{}ojasiewicz property and exponent $\vartheta_f \in [\frac{1}{2}, 1)$ are then used to derive a linear and sublinear rate in the fastest and slowest case, respectively. 


\vspace{0.2cm}

\begin{theo}[Algorithmic Properties of \gls{sBDCA}]{theo:algorithmic_properties_sBDCA}
    The sequences $\{ \mathbf{z}_k \}$ and $\{ \Tilde{\mathbf{y}}_k \}$ generated by Algorithm \ref{alg:sBDCA} satisfy the following properties 
        \begin{itemize}
        \item [(i)] For all $k \ge 0$, the strongly convex subproblem yields
        $$f(\Tilde{\mathbf{y}}_k) \le f(\mathbf{z}_k) - 2 \cdot \rho_k \cdot \lVert \mathbf{d}_k\rVert^2.$$
        \item [(ii)] The simplified line search guarantees
        $$f(\mathbf{z}_{k+1}) 
             \le f(\Tilde{\mathbf{y}}_k) - 2 \cdot \rho_k \cdot \lambda_k\cdot \lVert \mathbf{d}_k\rVert^2.$$
        \item[(iii)] The sequence $\bigl \{f \left( \mathbf{z}_k \right) \bigr\}$ is monotonously decreasing and convergent to some $f^*$.
        \item [(iv)] There exists a sub-sequence of $\bigl \{ \mathbf{z}_k \bigr\}$ converging to a local solution of \eqref{eq:LTS_as_DC}.
        \item[(v)] Let $\mathbf{z}^* \in \Delta_h$ be a limit point of $\{\mathbf{z}_k\}$. 
        Since the objective function $f$ satisfies the \L{}ojasiewicz property with exponent $\vartheta_f \in [\frac{1}{2}, 1)$, the sequence $\{\mathbf{z}_k\}$ converges to $\bar{\mathbf{z}}$ at a \textbf{linear rate} in the fastest case when $\vartheta_f = \frac{1}{2}$ and at a \textbf{sublinear rate} in the slowest case when $\frac{1}{2} < \vartheta_f < 1$.
        \end{itemize}
    \vspace{0.05cm}
\end{theo}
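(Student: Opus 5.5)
The plan is to treat items (i)--(iii) as consequences of the projection characterization of $\Tilde{\mathbf{y}}_k$ combined with concavity of $f$ (Theorem~\ref{theorem:concave_objective}). For (iv) I use compactness of $\Delta_h$, and for (v) the Kurdyka--\L ojasiewicz machinery as used in \citet[Theorem~1]{aragon2018}.

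\textbf{Items (i) and (ii).} By Appendix~\ref{sec:projection_onto_capped_simplex}, $\Tilde{\mathbf{y}}_k$ solves \eqref{eq:subproblem_lts}. Its first-order optimality condition over the convex set $\Delta_h$ reads $\bigl(\nabla\phi_{1,k}(\Tilde{\mathbf{y}}_k)-\nabla\phi_{2,k}(\mathbf{z}_k)\bigr)^\top(\mathbf{z}-\Tilde{\mathbf{y}}_k)\ge 0$ for all $\mathbf{z}\in\Delta_h$.

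For (i), I take $\mathbf{z}=\mathbf{z}_k$. Inserting $\nabla\phi_{1,k}=2\rho_k\mathbf{z}$ and $\nabla\phi_{2,k}=2\rho_k\mathbf{z}-\nabla f$ gives $\nabla f(\mathbf{z}_k)^\top\mathbf{d}_k\le -2\rho_k\lVert\mathbf{d}_k\rVert^2$. Concavity, $f(\Tilde{\mathbf{y}}_k)\le f(\mathbf{z}_k)+\nabla f(\mathbf{z}_k)^\top\mathbf{d}_k$, then yields (i).

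For (ii), I need the direction to remain a descent direction at $\Tilde{\mathbf{y}}_k$. The same optimality condition gives $\nabla\phi_{1,k}(\Tilde{\mathbf{y}}_k)^\top\mathbf{d}_k\le\nabla\phi_{2,k}(\mathbf{z}_k)^\top\mathbf{d}_k$. Since $-f$ is convex, $\phi_{2,k}=\rho_k\lVert\cdot\rVert^2-f$ is $2\rho_k$-strongly convex, which gives strong monotonicity of its gradient. Together these yield
\[
\nabla f(\Tilde{\mathbf{y}}_k)^\top\mathbf{d}_k\le-\bigl(\nabla\phi_{2,k}(\Tilde{\mathbf{y}}_k)-\nabla\phi_{2,k}(\mathbf{z}_k)\bigr)^\top\mathbf{d}_k\le-2\rho_k\lVert\mathbf{d}_k\rVert^2 .
\]
Plugging this into the chain \eqref{eq:rho_in_Armijo} with $\lambda_k=\bar\lambda_k\ge 0$ gives (ii).

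\textbf{Item (iii).} Adding (i) and (ii) gives $f(\mathbf{z}_{k+1})\le f(\mathbf{z}_k)-2\rho_k(1+\lambda_k)\lVert\mathbf{d}_k\rVert^2$. So $\{f(\mathbf{z}_k)\}$ is monotonically decreasing. It is bounded below by $0$ (Theorem~\ref{theorem:concave_objective}), hence it converges to some $f^*$. Telescoping and using $\rho_k\ge\min\{\rho_0,\rho_{\min}\}>0$ then gives $\sum_k\lVert\mathbf{d}_k\rVert^2<\infty$, so $\mathbf{d}_k\to\mathbf{0}$.

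\textbf{Item (iv).} $\Delta_h$ is compact and $\rho_k\in[\min\{\rho_0,\rho_{\min}\},\max\{\rho_0,\rho_{\max}\}]$. Hence I can extract a subsequence with $\mathbf{z}_{k_j}\to\mathbf{z}^*$ and $\eta_{k_j}\to\eta^*>0$. The projection is continuous and $\mathbf{d}_k\to\mathbf{0}$, so $\mathbf{z}^*=P_{\Delta_h}(\mathbf{z}^*-\eta^*\nabla f(\mathbf{z}^*))$. This means $\mathbf{z}^*$ is a KKT point of \eqref{eq:LTS_as_DC}.

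This is where I expect a subtlety. A KKT point of a concave minimization need not be a local minimizer. I would therefore argue as in the DC literature: "local solution" is understood as a critical point, which is the best attainable for concave objectives. Alternatively, I would invoke the extreme-point structure from \citet{NGUYEN2010}, checking that a fixed point at a vertex with strict ordering of residuals is a local minimizer.

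\textbf{Item (v), the main obstacle.} The first task is to show that $f$ is real-analytic on a neighbourhood of $\Delta_h$. I would assume $\mathbf{X}^\top\mathbf{Z}\mathbf{X}$ is nonsingular on $\Delta_h$ (general position, $h\ge p$). Theorem~\ref{theo:cayley_hamilton} applied to $\mathbf{A}=\mathbf{X}^\top\mathbf{Z}\mathbf{X}$ gives
\[
\mathbf{A}^{-1}=-b_0^{-1}\Bigl(\mathbf{A}^{p-1}+\textstyle\sum_{i=1}^{p-1}b_i\mathbf{A}^{i-1}\Bigr),
\]
where each $b_i$, and each entry of $\mathbf{A}$, is polynomial in $\mathbf{z}$. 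Hence $f$ is a ratio of polynomials with nonvanishing denominator $b_0=\pm\det\mathbf{A}$, so it is real-analytic and semialgebraic. The function $F=f+\iota_{\Delta_h}$ is then semialgebraic, so it has the (K)\L ojasiewicz property at $\mathbf{z}^*$ with some exponent $\vartheta\in[0,1)$. I would take $\vartheta_f\ge\frac12$ as in the statement, treating $\vartheta<\frac12$ as covered by the linear or finite case.

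With the exponent in hand, I verify the two standard ingredients.
\begin{itemize}
\item \emph{Sufficient decrease.} From (iii), $F(\mathbf{z}_{k+1})\le F(\mathbf{z}_k)-a\lVert\mathbf{d}_k\rVert^2$. Since $\bar\lambda_k$ is bounded (the set $\Delta_h$ is bounded and $\mathbf{d}_k\neq\mathbf{0}$ before termination), $\lVert\mathbf{z}_{k+1}-\mathbf{z}_k\rVert=(1+\bar\lambda_k)\lVert\mathbf{d}_k\rVert$ is comparable to $\lVert\mathbf{d}_k\rVert$.
\item \emph{Relative error.} The optimality condition gives an element of $\partial F(\Tilde{\mathbf{y}}_k)$ of norm at most $(2\rho_k+L)\lVert\mathbf{d}_k\rVert$, where $L$ is the Lipschitz constant of $\nabla f$ on $\Delta_h$.
\end{itemize}

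The delicate point is that the subgradient bound lives at $\Tilde{\mathbf{y}}_k$ rather than at $\mathbf{z}_{k+1}$. I would handle this as in the BDCA analysis of \citet{aragon2018}, applying the \L ojasiewicz inequality at $\Tilde{\mathbf{y}}_k$ and using $f(\mathbf{z}_{k+1})\le f(\Tilde{\mathbf{y}}_k)$. Then the usual argument gives:
\begin{itemize}
\item finite length, $\sum_k\lVert\mathbf{z}_{k+1}-\mathbf{z}_k\rVert<\infty$, so the whole sequence converges to $\bar{\mathbf{z}}=\mathbf{z}^*$;
\item a linear rate when $\vartheta_f=\frac12$;
\item a rate $\mathcal{O}\bigl(k^{-(1-\vartheta_f)/(2\vartheta_f-1)}\bigr)$ when $\frac12<\vartheta_f<1$.
\end{itemize}
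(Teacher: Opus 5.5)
Items (i)--(iii) are the paper's argument. Your strong monotonicity of $\nabla\phi_{2,k}$ with modulus $2\rho_k$ is the same inequality as the paper's $\mathbf{d}_k^\top\nabla f(\Tilde{\mathbf{y}}_k)\le\mathbf{d}_k^\top\nabla f(\mathbf{z}_k)$. Your telescoped bound $\sum_k\lVert\mathbf{d}_k\rVert^2<\infty$ supplies $\lVert\Tilde{\mathbf{y}}_{k_t}-\mathbf{z}_{k_t}\rVert\to0$, which the paper uses in (iv) without proof.

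For (iv) you pass to the limit in the projection fixed-point equation. The paper instead passes to the limit in the KKT system of the subproblem and bounds the multipliers via MFCQ. Both give exactly that $\mathbf{z}^*$ is a KKT point; your route is shorter. Your doubt about the last step is justified: the paper's claim that a KKT point of a linearly constrained concave program is a local minimizer is false, even for this $f$. Take $\mathbf{X}=\mathbf{1}_3$ (so $p=1$), $h=2$ and $\mathbf{y}=(0,1,\tfrac12)^\top$. Then $f(\tfrac12+t,\tfrac12-t,1)=\tfrac14-\tfrac12t^2$, and $(\tfrac12,\tfrac12,1)$ satisfies the KKT conditions (squared residuals $\tfrac14,\tfrac14,0$). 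Algorithm~\ref{alg:sBDCA} started there gives $\mathbf{d}_0=\mathbf{0}$ and stops at a point that is not a local minimizer. So (iv) holds only in the critical-point sense you propose. Your extreme-point alternative covers only vertex limits with strict residual ordering.

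In (v) your route is the sound one and differs from the paper's in two places. First, the paper's Cayley--Hamilton step treats the $c_i$ as constants. In fact $c_i=-b_{i+1}/b_0$ (with $b_p:=1$) and $b_0=(-1)^p\det(\mathbf{X}^\top\mathbf{Z}\mathbf{X})$, all depending on $\mathbf{z}$. So $f$ is rational, not polynomial: for a single column, $f=\sum_iz_iy_i^2-(\sum_iz_ix_iy_i)^2/\sum_iz_ix_i^2$. Your real-analytic/semialgebraic conclusion is what actually holds. Second, the inequality of Definition~\ref{defi:lojasiewicz_property} concerns critical points of $f$. At a KKT point, $\nabla f(\mathbf{z}^*)$ is the vector of squared residuals and is generally nonzero. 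The KL property of $F=f+\iota_{\Delta_h}$, with your subgradient bound at $\Tilde{\mathbf{y}}_k$, is the right tool.

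The step that fails is your sufficient-decrease bullet. $\bar\lambda_k$ is the maximal feasible step, so a priori you only have $\bar\lambda_k\le\operatorname{diam}(\Delta_h)/\lVert\mathbf{d}_k\rVert$, which degenerates exactly as $\mathbf{d}_k\to\mathbf{0}$. The BDCA analyses you (and the paper) cite use a fixed cap on the line-search step, which Algorithm~\ref{alg:sBDCA} lacks. In the example above, starting at $(\tfrac12+t,\tfrac12-t,1)$ gives $\mathbf{d}_0=\tfrac{\eta_0t}{2}(1,-1,0)^\top$ and $\bar\lambda_0=\bigl(1-(2+\eta_0)t\bigr)/(\eta_0t)$. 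Hence $\lVert\mathbf{z}_1-\mathbf{z}_0\rVert/\lVert\mathbf{d}_0\rVert$ is unbounded as $t\to0$.

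The fix is to keep $\bar\lambda_k$ from (ii) rather than absorb it. Let $a_k:=f(\mathbf{z}_k)-f^*$ and $b_k:=f(\Tilde{\mathbf{y}}_k)-f^*$, so $a_{k+1}\le b_k\le a_k$. Let $\varphi$ be the concave desingularizer and $C$ your relative-error constant, so that KL at $\Tilde{\mathbf{y}}_k$ gives $\varphi'(b_k)\ge1/(C\lVert\mathbf{d}_k\rVert)$. Concavity of $\varphi$, (i)--(ii), and $\varphi'(a_k)\ge\varphi'(b_{k-1})$ (as $a_k\le b_{k-1}$) give
\[
\begin{aligned}
\varphi(b_k)-\varphi(a_{k+1})&\ge\varphi'(b_k)\,(b_k-a_{k+1})\ge\frac{2\rho_k}{C}\,\bar\lambda_k\lVert\mathbf{d}_k\rVert,\\
\varphi(a_k)-\varphi(b_k)&\ge\varphi'(b_{k-1})\,(a_k-b_k)\ge\frac{2\rho_k\lVert\mathbf{d}_k\rVert^2}{C\lVert\mathbf{d}_{k-1}\rVert}.
\end{aligned}
\]
The second line with $2\lVert\mathbf{d}_k\rVert\le\lVert\mathbf{d}_{k-1}\rVert+\lVert\mathbf{d}_k\rVert^2/\lVert\mathbf{d}_{k-1}\rVert$ gives $\sum_k\lVert\mathbf{d}_k\rVert<\infty$. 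The first gives $\sum_k\bar\lambda_k\lVert\mathbf{d}_k\rVert<\infty$. Hence $\sum_k\lVert\mathbf{z}_{k+1}-\mathbf{z}_k\rVert=\sum_k(1+\bar\lambda_k)\lVert\mathbf{d}_k\rVert<\infty$, with tail bounded by $\lVert\mathbf{d}_{k-1}\rVert+c\,\varphi(a_k)$. Together with $a_{k+1}^{2\vartheta}\le b_k^{2\vartheta}\lesssim\lVert\mathbf{d}_k\rVert^2\lesssim a_k-a_{k+1}$ and the usual induction keeping iterates in the KL neighbourhood, this yields your linear and sublinear rates.
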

\begin{prf}{prf:convergence_rate}

    \begin{enumerate}
        \item [(i)]
            To prove the first statement, we observe that the orthogonal projection of $\mathbf{z}_k - \eta_k \cdot \nabla f(\mathbf{z})$ implies [cf. \citet[p. 201]{bertsekas1999}] that  
            \begin{equation}\label{eq:dk_descent_direction}
            \nabla_{\mathbf{d}_k} f(\mathbf{z}_k) = \mathbf{d}_k^\top \nabla f \left(\mathbf{z}_k \right)  \leq - \frac{1}{\eta_k} \cdot \lVert \mathbf{d}_k \rVert^2 = - 2 \cdot \rho_k \cdot \lVert \mathbf{d}_k \rVert^2\leq 0,
            \end{equation}
            where $\nabla_{\mathbf{d}_k} f(\mathbf{z}_k)$ denotes the one-sided directional derivative of $f$ at $\mathbf{z}_k$ with respect to the direction $\mathbf{d}_k$, formally defined as
            \begin{equation*}
                \nabla_{\mathbf{d}_k} f \left(\mathbf{z}_k \right) := \lim_{t \rightarrow 0} \frac{f\left(\mathbf{z}_k + t \cdot \mathbf{d}_k \right) - f \left(\mathbf{z}_k \right)}{t} = \mathbf{d}_k^\top \nabla f \left(\mathbf{z}_k \right).
            \end{equation*}
            The result shown in \eqref{eq:dk_descent_direction} then proves that $\mathbf{d}_k$ is a descent direction at $\nabla f(\mathbf{z}_k)$, and can be combined with the fact that $f$ is a concave function to obtain 
            \begin{equation}
            \mathbf{d}_k^\top \nabla f \left(\Tilde{\mathbf{y}}_k \right)  \leq \nabla \mathbf{d}_k^\top \nabla f \left(\mathbf{z}_k \right) \leq - 2 \cdot \rho_k \cdot \lVert \mathbf{d}_k \rVert^2 \leq 0,
            \end{equation}
            that proves that $\mathbf{d}_k$ is also a descent direction at $\nabla f(\Tilde{\mathbf{y}}_k)$. 
            The previous results can be further combined with the fact that the first-order Taylor approximation of a concave function is a global overestimator [cf. \citet[ch. 3]{boyd2009}] to receive
            \begin{align}
            f \left(\Tilde{\mathbf{y}}_k \right) &\leq f \left(\mathbf{z}_k \right) + \mathbf{d}_k^\top \nabla f \left(\mathbf{z}_k \right)\\
                        &\leq f(\mathbf{z}_k) - 2 \cdot \rho_k \cdot \lVert \mathbf{d}_k \rVert^2,
            \end{align}
            which shows that the \gls{DCA} step on its own generates a monotonously decreasing sequence of objective function values.
            
        \item [(ii)]
        As we have already shown that $\mathbf{d}_k$ is a descent direction at $\nabla f(\Tilde{\mathbf{y}}_k)$, we can similarly obtain the inequality
        \begin{equation}\label{eq:line_search}
            f \left(\mathbf{z}_{k+1} \right) = f \left(\Tilde{\mathbf{y}}_k + \bar{\lambda}_k \cdot \mathbf{d}_k \right) \leq f \left(\Tilde{\mathbf{y}}_k \right) - 2 \cdot \bar{\lambda}_k \cdot \rho_k \cdot \lVert \mathbf{d}_k \rVert^2,
        \end{equation}
        showing that the simplified line search decreases the objective function if $\bar{\lambda}_k \in (0, \infty)$.
        
        \item [(iii)] 
        Based on the first and second statement, we can then deduce that the sequence $\{ f \left( \mathbf{z}_{k} \right) \}$ is monotonously decreasing and convergent to some $f^*$, i.e.,
        \begin{equation}
        f^* \leq \ldots \leq f(\mathbf{z}_{k + 1}) = f \left(\Tilde{\mathbf{y}}_k + \bar{\lambda}_k \cdot \mathbf{d}_k \right) \leq f(\mathbf{z}_k) \leq \ldots \leq f(\mathbf{z}_0)
        \end{equation}
        as the objective function $f$ is bounded from below [cf. Theorem \ref{theorem:concave_objective}].
        
        \item [(iv)]
        To prove the fourth statement, we first observe that $\mathbf{z}_k \in \Delta_h $ for all $k\ge 0$.
        Moreover, the sequence $\bigl \{ \mathbf{z}_k \bigr\}$ is bounded, since $\Delta_h$ is a compact set (i.e., closed and bounded).
        As a result, there exists a sub-sequence $\bigl \{ \mathbf{z}_{k_t} \bigr\}$ of $\bigl  \{ \mathbf{z}_k \bigr\}$ converging to some $\mathbf{z}^*$. 
        For this subsequence, we then can write down the \gls{KKT} conditions of the strongly convex subproblem \eqref{eq:subproblem_lts}, as
            \begin{align}\label{eq:KKT_cond}
            \begin{cases}
               \nabla \phi_{1,k_t}(\Tilde{\mathbf{y}}_{k_t})  + \sum_{i=1}^{2n} \mu_{k_t,i} \cdot \nabla g_i(\Tilde{\mathbf{y}}_{k_t}) + \lambda_{k_t} \cdot \mathbf{1}_n = \nabla \phi_{2,k_t}(\mathbf{z}_{k_t}), \\
                \mu_{k_t,i} \cdot g_i(\Tilde{\mathbf{y}}_{k_t}) = 0, \mu_{k_t,i} \geq 0, \ g_i(\Tilde{\mathbf{y}}_{k_t}) \leq 0, \ i = 1, \ldots 2 \cdot n, \\
                \mathbf{1}_n^\top \Tilde{\mathbf{y}}_{k_t} = h,
            \end{cases}
            \end{align}
            where $\mu_{k_t,i} > 0$ and $\lambda_{k_t} \in \mathbb{R}$ are the iteration-specific Lagrange multiplier of the subsequence.
            The inequality constraint functions are given by
            \begin{align}
                \begin{split}
                    g_i(\mathbf{z}) :=
                    \begin{cases}
                        - z_i \quad &\text{if} \quad i \in \{ 1, \ldots, n\}, \\
                        z_{i - n} - 1 \quad &\text{if} \quad i \in \{ n + 1, \ldots, 2n\},
                    \end{cases}
                \end{split}
            \end{align}            
            where the gradients have the following form
            \begin{align}
                    \nabla g_i(\mathbf{z}) :=
                    \begin{cases}
                        - \mathbf{e}_i \quad &\text{if} \quad i \in \{ 1, \ldots, n\}, \\
                        \mathbf{e}_{i - n} \quad &\text{if} \quad i \in \{ n + 1, \ldots, 2n\}
                    \end{cases}
            \end{align}
            with $\mathbf{e}_i \in \mathbb{R}^n$ denoting the $i^{\text{th}}$ standard basis for all $i \in \{1, \ldots, n\}$. 
            We then observe that as $\lVert \Tilde{\mathbf{y}}_{k_t} -  \mathbf{z}_{k_t} \rVert \rightarrow 0$, we have $\Tilde{\mathbf{y}}_{k_t} \rightarrow \mathbf{z}^*$, which is the basis to  derive 
            \begin{equation}
                \nabla \phi_{1, k_t} \left ( \Tilde{\mathbf{y}}_{k_t} \right) \rightarrow \nabla \phi_{1} \left( \mathbf{z}^*\right) \quad \text{and}  \quad \nabla \phi_{2, k_t} \left ( \mathbf{z}_{k_t} \right) \rightarrow \nabla \phi_{2} \left( \mathbf{z}^*\right),
            \end{equation}
            with 
            \begin{equation}
            \nabla \phi_1 \left( \mathbf{z} \right):= 2 \cdot \bar{\rho} \cdot \mathbf{z} \quad \text{and} \quad \nabla \phi_{2} \left( \mathbf{z} \right):= 2 \cdot \bar{\rho} \cdot \mathbf{z} - \nabla f \left( \mathbf{z} \right),
            \end{equation}
            based on the continuity of $\nabla \phi_{1,k}$ and $\nabla \phi_{2, k}$, and the fact that the sequence of convexity parameters $\{ \rho_k \}$ is monotonously decreasing or increasing, and bounded from both sides [cf. Line \ref{line:rho_k} in Algorithm \ref{alg:sBDCA}]. 
            Now, it only remains to show that the subsequences of Lagrange multiplier $\{ \mu_{k_t, i}\}$ and $\{ \lambda_{k_t}\}$ are also bounded. 
            The latter can be proven based on the dual form of the \gls{MFCQ} that can be derived based on Motzkin's Transposition Theorem [cf. \citet[p. 4]{Solodov2011}; \citet[p. 44]{MANGASARIAN1967}]. 
            For the sake of brevity, we here only show that the \gls{LTS} problem fulfills \gls{MFCQ}. As the feasible set has only one equality constraint $\Tilde{h}(\mathbf{z}) := \mathbf{z}^\top \mathbf{1}_n - h = 0$ where $\mathbf{1}_n \in \{1\}^n$, it is trivial to show that the following equation
            \begin{equation}\label{eq:mfcq_1}
                a \cdot \nabla \Tilde{h}(\mathbf{z}) = a \cdot\mathbf{1}_n = \mathbf{0}_n  \quad \text{with} \quad a \in \mathbb{R},
            \end{equation}
            has only one solution at $a = 0$, and therefore the independence requirement with respect to the gradients of the equality constraints is always fulfilled. 
            By selecting the elements of $\mathbf{a}:= \begin{bmatrix} a_1, \ldots, a_n \end{bmatrix}^\top \in \mathbb{R}^n$ such that $\sum_{i=1}^n a_i = 0$ and
            \begin{align}
                    a_i \in
                    \begin{cases}
                         \mathbb{R}_{>0} \quad &\text{if} \quad z_i = 0, \\
                        \mathbb{R}_{<0} \quad &\text{if} \quad z_i = 1, \\
                        0 \quad &\text{if} \quad 0 < z_i < 1, 
                    \end{cases}
            \end{align}
            is fulfilled, we automatically can obtain the following results 
            \begin{equation}\label{eq:mfcq_2}
                \nabla g_i(\mathbf{z})^\top \mathbf{a}  = \sum_{a_i > 0} - a_i  + \sum_{a_j < 0} a_j < 0 \quad \forall i, j \in I(\mathbf{z}) \quad \text{and} \quad \nabla \Tilde{h}(\mathbf{z})^\top \mathbf{a} = \sum_{i=1}^n a_i = 0,
            \end{equation}
            as the box constraints can never be active on both sides for the same variable. 
            The combination of \eqref{eq:mfcq_1} and \eqref{eq:mfcq_2} then shows that all feasible points satisfy the \gls{MFCQ}. 
            Based on the previous results we then can take the limit ($t \rightarrow \infty$) in \eqref{eq:KKT_cond} to obtain 
            \begin{align}\label{eq:KKT_cond2}
                \begin{cases}
                   \nabla \phi_{1}(\mathbf{z}^*)  + \sum_{i=1}^{2n} \mu_{i} \cdot \nabla g_i(\mathbf{z}^*) + \lambda \cdot \mathbf{1}_n = \nabla \phi_{2}(\mathbf{z}^*), \\
                    \mu_{i} \cdot g_i(\mathbf{z}^*) = 0, \mu_{i} \geq 0, \ g_i(\mathbf{z}^*) \leq 0, \ i = 1, \ldots 2 \cdot n, \\
                    \mathbf{1}_n^\top \mathbf{z}^* = h,
                \end{cases}
            \end{align}
            which shows that $\mathbf{z}^*$ is a \gls{KKT} point of \eqref{eq:LTS_as_DC}. 
            As for a concave objective function subject to linear constraints, a \gls{KKT} point cannot be a saddle point, we directly obtain the result that $\mathbf{z}^*$ also must be a local minimizer of \eqref{eq:LTS_as_DC}.

        \item [(v)]
        To establish the last statement, we first show that $f$ is a multivariate polynomial with $\text{deg}  \left ( f \right ) \leq p + 1$ if $p + 1$ is an even number, and  $\text{deg}  \left ( f \right ) \leq p$ otherwise. 
        In order to obtain this result, one only has to observe that the Cayley-Hamilton Theorem implies that
        \begin{equation}
        (\mathbf{X}^\top \mathbf{Z} \mathbf{X})^{-1} = c_{p-1} \cdot (\mathbf{X}^\top \mathbf{Z} \mathbf{X})^{p - 1} + \ldots + c_1 \cdot (\mathbf{X}^\top \mathbf{Z} \mathbf{X}) + c_0 \cdot \mathbf{I}_p,
        \end{equation}
        which can be used to rewrite the objective function as
        \begin{equation}\label{eq:multivariate_polynomial}
            f(\mathbf{z}) = \mathbf{y}^\top \mathbf{Z} \mathbf{y} - \mathbf{y}^\top \mathbf{Z} \mathbf{X} \biggl( c_{p-1} \cdot (\mathbf{X}^\top \mathbf{Z} \mathbf{X})^{p - 1} + \ldots + c_1 \cdot (\mathbf{X}^\top \mathbf{Z} \mathbf{X}) + c_0 \cdot \mathbf{I}_p \biggl ) \mathbf{X}^\top \mathbf{Z} \mathbf{y}.
        \end{equation}    
        Based on this representation, it becomes clear that $f$ is a polynomial function with $\text{deg}  \left ( f \right ) \leq p + 1$. 
        This result can be combined with the fact that polynomials of odd degree greater than one can never be concave which leads to $\text{deg}  \left ( f \right ) \leq p$ if $p + 1$ is odd [cf. \citet[p. 202]{Lasserre2015}]. 
        As all real polynomials are also real-analytic functions, we can obtain the result that the function fulfills the \L{}ojasiewicz property with $\vartheta_{f} \in [\frac{1}{2},  1)$ [cf. \citet[p. 26]{Yagi2021};  \citet[ch. 1]{SheilSmall2002}; \citet[p. 67]{Lojasiewicz1965}].
        With this result, we can then follow the standard techniques developed in \cite{aragon2018} and \citet{aragon2020} to derive the linear and sublinear convergence of $\{\mathbf{z}_k\}$. We skip the detailed proof for brevity. 
        
    \end{enumerate}
\end{prf}



\subsection{Comparison of Fixed and Successive DC Decompositions}\label{sec:LS_and_sDC}


In this subsection, we theoretically and practically analyze how the choice between a fixed $\rho$ and a varying $\rho_k$ affects the performance of \gls{sBDCA}.
From a theoretical perspective, the convexity parameter is inversely proportional to~$\eta_k$, which represents the learning rate in the gradient-descent step.
At each iteration, the update is performed as follows
\begin{equation}\label{eq:gradient_descent_step}
    \Tilde{\mathbf{z}}_k = \mathbf{z}_k - \eta_k \cdot \nabla f \left(\mathbf{z}_k \right) = \mathbf{z}_k - \frac{1}{2 \cdot \rho_k} \cdot \nabla f \left(\mathbf{z}_k \right),
\end{equation}
and subsequently serves as input to the projection procedure.
Larger $\rho_k$ values correspond to smaller gradient-descent steps, which yield smaller improvements in the corresponding \gls{DCA} step. 
From a computational point of view, $\rho_k$ should therefore not be chosen too large. 
However, choosing $\rho_k$ too small is also not advisable, as this could cause the algorithm to overshoot desirable local minima or even diverge. 
To overcome this trade-off, we propose a flexible step size in Algorithm~\ref{alg:sBDCA}. 
One way to implement this idea is through the introduction of 
\begin{equation}\label{eq:rho_k}
    \rho_k = \min \Bigl \{ \max  \bigl( \nu \cdot \rho_{k - 1}, \rho_{\text{min}} \bigr), \rho_{\text{max}} \Bigr \} \quad \text{with} \quad \rho_{\text{max}} < \infty,
\end{equation}
where $\nu \in \mathbb{R}_{>0}$ determines whether the learning rate increases or decreases over time, $\rho_{\min} \in \mathbb{R}_{>0}$ ensures that the procedure does not diverge, and $\rho_{\max} \in \mathbb{R}_{>0}$ prevents the update steps from becoming insignificant. 
Based on this adaptation, a different \gls{DC} decomposition is obtained per iteration, along with an iteration-specific learning rate $\eta_k = \frac{1}{2 \cdot \rho_k}$. 


After this brief theoretical discussion, we now inspect the practical perspective using ten synthetic toy examples.
These examples are created based on the \gls{OSCM} introduced by \citet[p. 3221]{NGUYEN2010}, where outliers are created in the $\mathbf{y}$-direction. 
The regression coefficients are randomly drawn according to $\beta_j \sim \mathcal{U}(0, 5)$ for all $j \in \{1, \ldots, p\}$, and the corresponding observations are constructed as
\begin{align}
    \begin{split}
        y_i &= \beta_1 + \beta_2 \cdot x_{i2} + \ldots + \beta_p \cdot x_{ip} + \epsilon_i, \quad \text{for all} \quad i \in \{1, \ldots, n\},
    \end{split}
\end{align}
where $x_{ij} \sim \mathcal{U}(0, 1)$ for all $j \in \{2, \ldots, p\}$, and $\epsilon_i \sim \mathcal{N}(0, 1)$. 
Then, outliers are introduced by replacing the error terms for a subset of instances with values drawn from a $\chi^2$-distribution with five degrees of freedom.
For each combination of $n \in \{250, 500\}$, $h \in \{\lceil (n + p + 1) \cdot 2^{-1} \rceil\}$, and $p \in \{\lceil n \cdot 0.1 \rfloor, \lceil n \cdot 0.2 \rfloor, \lceil n \cdot 0.3 \rfloor, \lceil n \cdot 0.4 \rfloor, \lceil n \cdot 0.5 \rfloor\}$, one dataset is generated containing $\lceil n \cdot 0.1 \rfloor$ outliers. 
For these datasets, we examine the performance of \gls{sBDCA} under six different parameter settings.
In three configurations, the algorithm runs with a fixed learning rate where $\rho \in \{1, 10, 100\}$ such that the procedure is in line with the \gls{BDCA}. 
In the other settings, different combinations of $\nu \in \{0.9, 0.95\}$, $\rho_0 \in \{10, 100, 1000\}$ and $\rho_{\min} \in \{2.5\}$ are selected.  
Since the \gls{sBDCA} produces non-deterministic results, each configuration is evaluated using 250 random starting points sampled from the interior of the feasible set.
Note that this numerical comparison was performed in Python using the \href{https://www.southampton.ac.uk/research/facilities/iridis-research-computing-facility}{IRIDIS~5 Research Computing Facility}, where each compute node is equipped with an Intel Xeon Gold 6138 processor (40 CPUs at 2.0 GHz) and 192 GB of DDR4 memory.
To ensure a reproducible and transparent comparison, the complete implementation is publicly available via the project’s GitHub repository: \href{https://github.com/mlthormann/LTS-With-DC-Programming/}{LTS-with-DC-Programming \faExternalLink}.


\begin{figure}[H]%
    \centering
    \captionsetup{justification=centering}
    \includegraphics[width=1\textwidth]{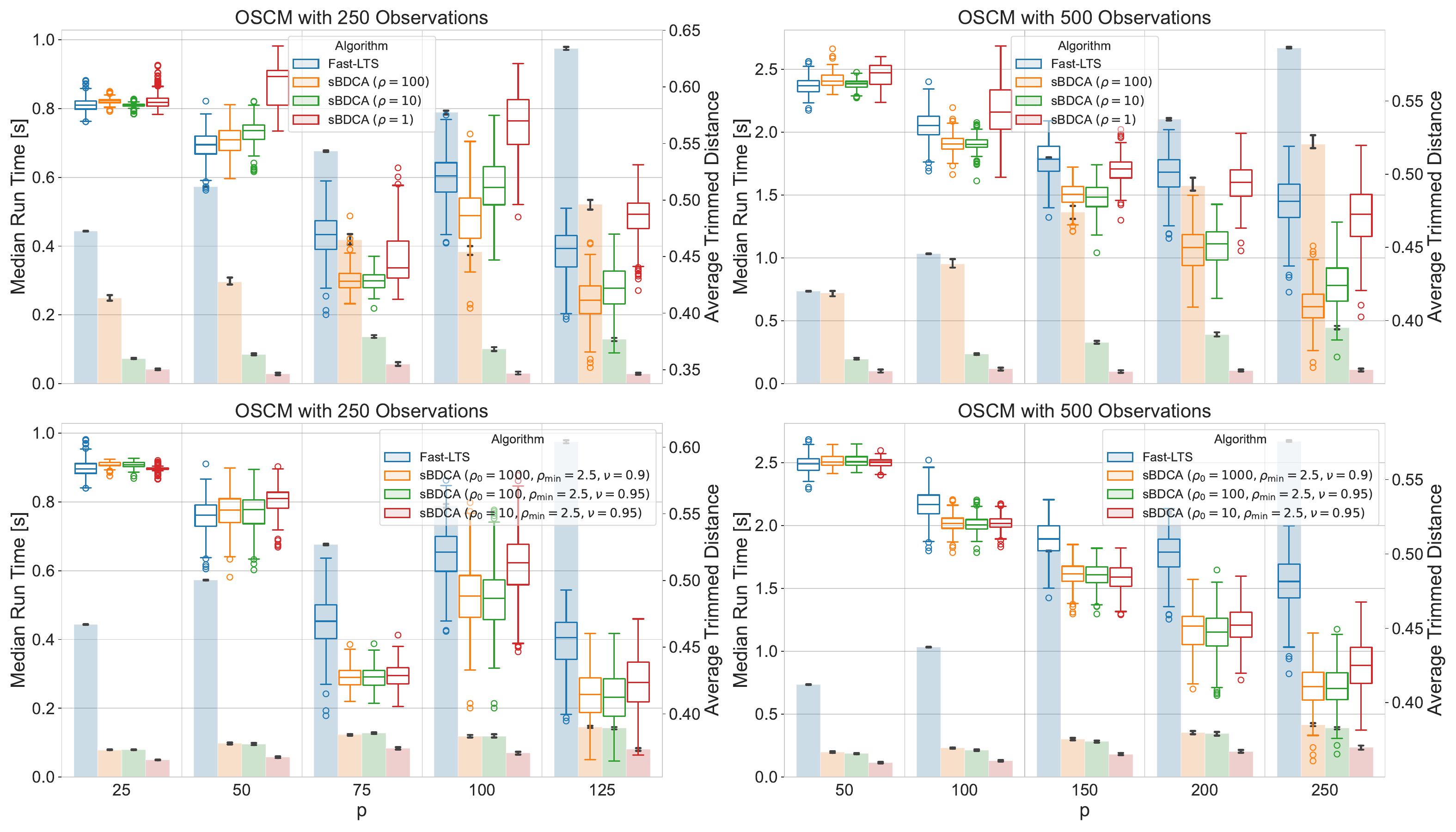}
    \caption{Influence of $\rho_k$ on \gls{ATD} and Computation Time.}%
    \label{fig:comp_fix_succ_dc}%
\end{figure}

\vspace{-0.2cm}


The results for the ten toy examples are summarized in Figure~\ref{fig:comp_fix_succ_dc}, where the performance of Fast-\gls{LTS} serves as a benchmark. 
As no publicly available Python implementation of the heuristic was available, we translated the MATLAB code from the \textit{LIBRA} library into Python [cf. \citet{Verboven2010}].
The two upper subplots in Figure~\ref{fig:comp_fix_succ_dc} compare the \gls{ATD} ($:= \sqrt{f(\mathbf{z}) \cdot h^{-1}}$, right y-axis, box plots) and the median computation time (left y-axis, bar plots) of all feasible outcomes of the \gls{sBDCA} configurations with a fixed learning rate. 
Overall, our empirical findings confirm the trade-off between computational efficiency and solution accuracy highlighted in the theoretical discussion.
Larger values of $\rho$ result in lower objective function values but at the expense of increased median computation time. 
This slowdown is primarily due to the higher number of iterations required for convergence. 
It is particularly pronounced in high-dimensional settings with many independent variables, where computing the inverse of $\mathbf{X}^\top \mathbf{Z} \mathbf{X}$ becomes more costly at each iteration.
In contrast, smaller values of $\rho$ substantially reduce runtime but also compromise solution quality.
In comparison to the Fast-\gls{LTS}, \gls{sBDCA} with a larger $\rho$ provides a similar performance in settings with fewer regression coefficients, but outperforms the state-of-the-art in terms of output quality in higher dimensional settings. 


The two lower subplots in Figure~\ref{fig:comp_fix_succ_dc} compare the \gls{ATD} (right y-axis, box plots) and the median computation time (left y-axis, bar plots) of all feasible outcomes of the \gls{sBDCA} configurations with an iteration-specific learning rate. 
Compared to the results shown in the upper subplots, the best median \gls{ATD} remains similar, while its variability is slightly reduced.
In high-dimensional settings, \gls{sBDCA} with an iteration-specific learning rate also consistently outperforms Fast-\gls{LTS} in terms of \gls{ATD}, while delivering comparable solution quality in lower-dimensional cases.
In terms of computation time, all \gls{sBDCA} variants shown in the lower subplots exhibit notable efficiency improvements, consistently outperforming Fast-\gls{LTS} across the ten toy examples.
Especially in settings with many independent variables, the computation time of Fast-\gls{LTS} increases sharply, whereas \gls{sBDCA} configurations with an iteration-specific learning rate maintain a similar runtime.
The observed efficiency gains can be explained by the fact that \gls{sBDCA} requires relatively few iterations, with the count remaining stable or even decreasing as $p$ increases. 
In contrast, Fast-\gls{LTS} is hindered by the need to update the regression coefficients more than 1,000 times during the concentration steps, regardless of $p$ [cf. \citet[p. 2504]{TORTI2012}].
As previously noted, these updates require computing the inverse of $\mathbf{X}^\top \mathbf{Z} \mathbf{X}$, which constitutes the primary source of computational cost and scales significantly with increasing $p$.

\vspace{-0.3cm}



\subsection{Initialization Strategy \& Preconditioning the Gradient with Leverage}\label{sec:initialization_and_preconditioner}


Previously, we outlined the derivation of \gls{sBDCA} and examined the impact of the convexity parameter on runtime.
From a computational perspective, this modification already made the proposed \gls{DC} programming algorithm highly competitive across the ten toy examples. 
In this subsection, we explore strategies to further enhance the solution quality, which appears to be strongly influenced by the chosen starting point [cf. Figure~\ref{fig:comp_fix_succ_dc}]. 
From a technical perspective, this dependence can be explained by examining the partial derivative of the objective function $f$ with respect to an individual weight $z_i$, for $i \in \{1, \ldots, n\}$, formally expressed as 
\begin{align}\label{eq:derivative}
    \begin{split}
        \frac{\partial f(\mathbf{z})}{\partial z_i} &= \frac{\partial (\mathbf{y}^\top \mathbf{Z} \mathbf{y} - \mathbf{y}^\top \mathbf{Z} \mathbf{X} (\mathbf{X}^\top \mathbf{Z} \mathbf{X})^{-1} \mathbf{X}^\top \mathbf{Z} \mathbf{y})}
        {\partial z_i} \\
        &= y_i^2 - 2 \cdot \frac{\partial (\mathbf{y}^\top \mathbf{Z} \mathbf{X})}{\partial z_i} (\mathbf{X}^\top \mathbf{Z} \mathbf{X})^{-1} \mathbf{X}^\top \mathbf{Z} \mathbf{y} - \mathbf{y}^\top \mathbf{Z} \mathbf{X} \frac{\partial \Bigl( (\mathbf{X}^\top \mathbf{Z} \mathbf{X})^{-1} \Bigr)}{\partial z_i} \mathbf{X}^\top \mathbf{Z} \mathbf{y}\\
        &= y_i^2 - 2 \cdot y_i \cdot \mathbf{x}_i (\mathbf{X}^\top \mathbf{Z} \mathbf{X})^{-1} \mathbf{X}^\top \mathbf{Z} \mathbf{y} + \mathbf{y}^\top \mathbf{Z} \mathbf{X} (\mathbf{X}^\top \mathbf{Z} \mathbf{X})^{-1} \mathbf{x}_i^\top \mathbf{x}_i (\mathbf{X}^\top \mathbf{Z} \mathbf{X})^{-1} \mathbf{X}^\top \mathbf{Z} \mathbf{y} \\
        &= \Bigl(y_i - \mathbf{x}_i (\mathbf{X}^\top \mathbf{Z} \mathbf{X})^{-1} \mathbf{X}^\top \mathbf{Z} \mathbf{y} \Bigr)^2 \\
        &= \Bigl( y_i - \mathbf{x}_i \boldsymbol{\beta} (\mathbf{z}) \Bigr)^2,
    \end{split}
\end{align}
which corresponds to the squared residual of the $i$-th instance computed based on $\mathbf{z}$. 
It plays a critical role in the strongly convex subproblem, which is solved at each iteration by projecting $\Tilde{\mathbf{z}}_k = \mathbf{z}_k - \eta_k \cdot \nabla f(\mathbf{z}_k)$ onto the feasible set. 
In particular, the orthogonal projection onto the capped simplex tends to map the largest and smallest components of $\Tilde{\mathbf{z}}_k$ close to zero and one, respectively. 
Consequently, the $k$-th regression line must already identify outliers among the instances with the largest residuals and smallest weights. 
Otherwise, differences between the current weight and the scaled residual may not fall among the $n-h$ smallest values, potentially increasing the influence of outliers erroneously. 
This highlights that the selection of an appropriate starting point is critical for finding lower objective function values, and is discussed in detail below.


Throughout this paper, we assume no prior knowledge regarding the number or location of outliers in the data. 
Consequently, \gls{sBDCA} has so far been initialized using random points drawn from the interior of the capped simplex to avoid the vertices that correspond to local minima with high \gls{ATD}. 
Such starting points can be generated by sampling from a Dirichlet distribution, whose \gls{PDF} is given by 
\begin{equation}
    f_{\text{Dir}} \left(\mathbf{x}, \boldsymbol{\alpha} \right) := \frac{\Gamma \left( \sum_{i=1}^n \alpha_i \right)}{\prod_{i=1}^n \Gamma(\alpha_i)} \prod_{i=1}^n x_i^{\alpha_i - 1},
\end{equation}
where $\mathbf{x} = [x_1, \ldots, x_n]^\top \in \Delta_1$ denotes the support, $\boldsymbol{\alpha} = [\alpha_1, \ldots, \alpha_n]^\top \in \mathbb{R}_{>0}^{n}$ are the concentration parameters, and $\Gamma(\cdot)$ denotes the gamma function, which generalizes the factorial to non-integer values [cf. \citet[p. 309]{ibe2013}; \citet[p. 38]{ng2011}]. 
By setting $\alpha_1 = \ldots = \alpha_n$, the resulting Dirichlet distribution becomes symmetric, concentrating its probability mass evenly around the center of the probability simplex. 
This behavior is illustrated in Figure~\ref{fig:dirichlet_pdf}, where it can further be seen that larger and more uniform values of $\alpha_i$, for all $i \in \{1, \ldots, n\}$, cause random draws to cluster tightly around the center point $x_1 = \ldots = x_n = \frac{1}{n}$. 
To obtain a feasible initialization within the capped simplex, these values can subsequently be scaled by a factor $h$. 


\begin{figure}[H]%
    \centering
    \captionsetup{justification=centering}
    \includegraphics[width=1\textwidth]{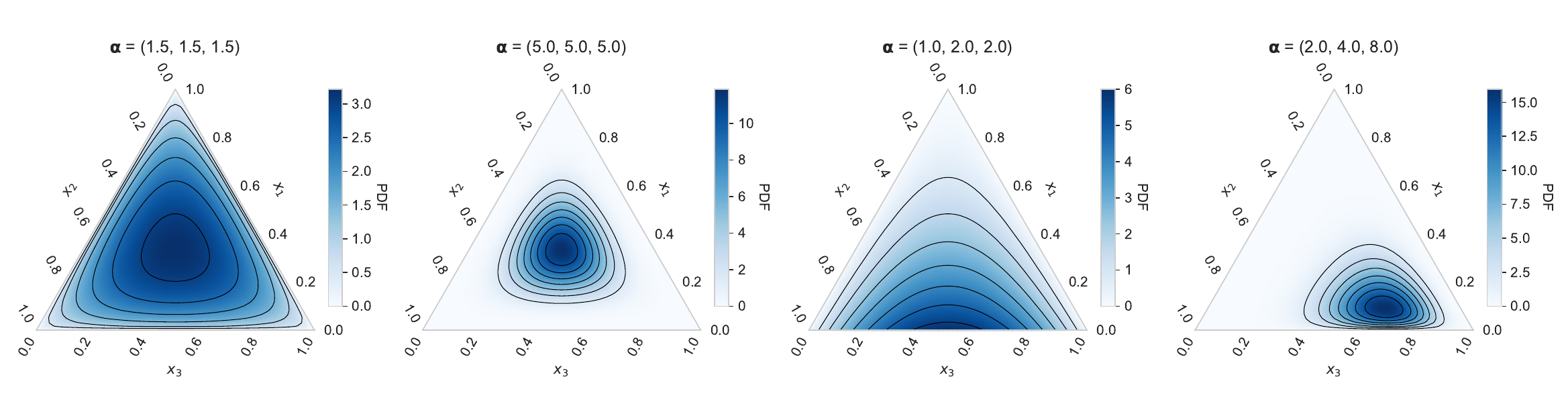}
    \caption{Influence of $\boldsymbol{\alpha}$ on the \gls{PDF} of the Dirichlet Distribution.}%
    \label{fig:dirichlet_pdf}%
\end{figure}

\vspace{-0.2cm}


Since the inspection of the gradient $\nabla f$ at the beginning of this subsection suggested that starting points should ideally not be too far from the global solution -- which corresponds to a vertex -- we next consider an initialization strategy based on an extreme point of the feasible set. 
Specifically, we denote by $\mathbf{v}_0 \in \{0, 1\}^n$ a vertex satisfying $\mathbf{v}_0^\top \mathbf{1}_n = h$. 
The probability that such a feasible point excludes all outliers is given by 
\begin{equation}
    \mathbb{P} \bigl( \mathbf{v}_{0} \text{ contains none of the $n_o$ outliers} \bigr) = \prod_{i=0}^{h-1} \frac{n_{c} - i}{n - i},
\end{equation}
where $n_c \in \mathbb{N}$ and $n_o \in \mathbb{N}$ denote the number of core and outlying observations, respectively, with $n = n_c + n_o$. 
Even for moderate values such as $n_c = 95$, $n_o = 5$, and $h = \frac{n}{2}$, this probability remains relatively low, namely $\prod_{i=0}^{50-1} \frac{95 - i}{100 - i} = 0.0312$. 
Consequently, such vertex-based starting points will likely include outliers. 
This drawback was also noted by \citet[p. 33--34]{rousseeuw2006}, who instead proposed to initialize Fast-\gls{LTS} using random $p$-subsets, denoted by $\mathbf{p}_0 \in \{0, 1\}^n$ with $\mathbf{p}_0 \in \Delta_p$.
As such a starting point selects only $p$ observations, the probability that it excludes all outliers is given by 
\begin{equation}
    \mathbb{P} \bigl( \mathbf{p}_{0} \text{ contains no outlier} \bigr) = \prod_{i=0}^{p-1} \frac{n_{c} - i}{n - i} > \mathbb{P} \bigl( \mathbf{v}_{0} \text{ contains no outlier} \bigr),
\end{equation} 
which is larger than the corresponding probability for $\mathbf{v}_0$ whenever $p < h$. 
Although a $p$-subset does not constitute a feasible solution to the \gls{LTS} problem and thus cannot serve directly as an initialization for \gls{sBDCA}, it can be used to fit an initial regression line. 
This initial fit provides estimates of the squared residuals, which in turn can be used to select the $h$ observations with the smallest errors, yielding a vertex initialization. 
It appears to be the most logical approach to generate a feasible extreme point without prior outlier knowledge.


To determine whether the $p$-subset initialization strategy facilitates the identification of local solutions with lower \gls{ATD}, we repeat the ten toy examples from the previous subsection.
The results are presented in Figure~\ref{fig:comp_initial}, where the two  subplots compare the \gls{ATD} (right y-axis, box plots) and the median computation time (left y-axis, bar plots) of all feasible outcomes for the different combinations of $n$ and $p$.
When focusing on computation time, we observe that the two initialization strategies have no significant impact on the runtime of \gls{sBDCA}.
However, the solution quality of the proposed \gls{DC} programming algorithm noticeably deteriorates under the $p$-subset initialization.
As a result, \gls{sBDCA} is no longer competitive with Fast-\gls{LTS} in terms of \gls{ATD}. 
This underscores a major drawback of the $p$-subset strategy, whose effect on the variability of the \gls{ATD} for the given datasets could be compared with the return on an investment in randomly selected stocks. 
Specifically, putting all resources into a few $p$ shares (i.e., instances) can yield either exceptionally high returns (i.e., very low \gls{ATD}) or result in total losses (i.e., a regression breakdown with very high \gls{ATD}). 
In contrast, spreading investments more evenly over all $n$ stocks leads to better diversification, which offers stable but not extraordinary returns (i.e., we obtain more similar solutions, which may not correspond to the global minimum).


\begin{figure}[H]%
    \centering
    \captionsetup{justification=centering}
    \includegraphics[width=1\textwidth]{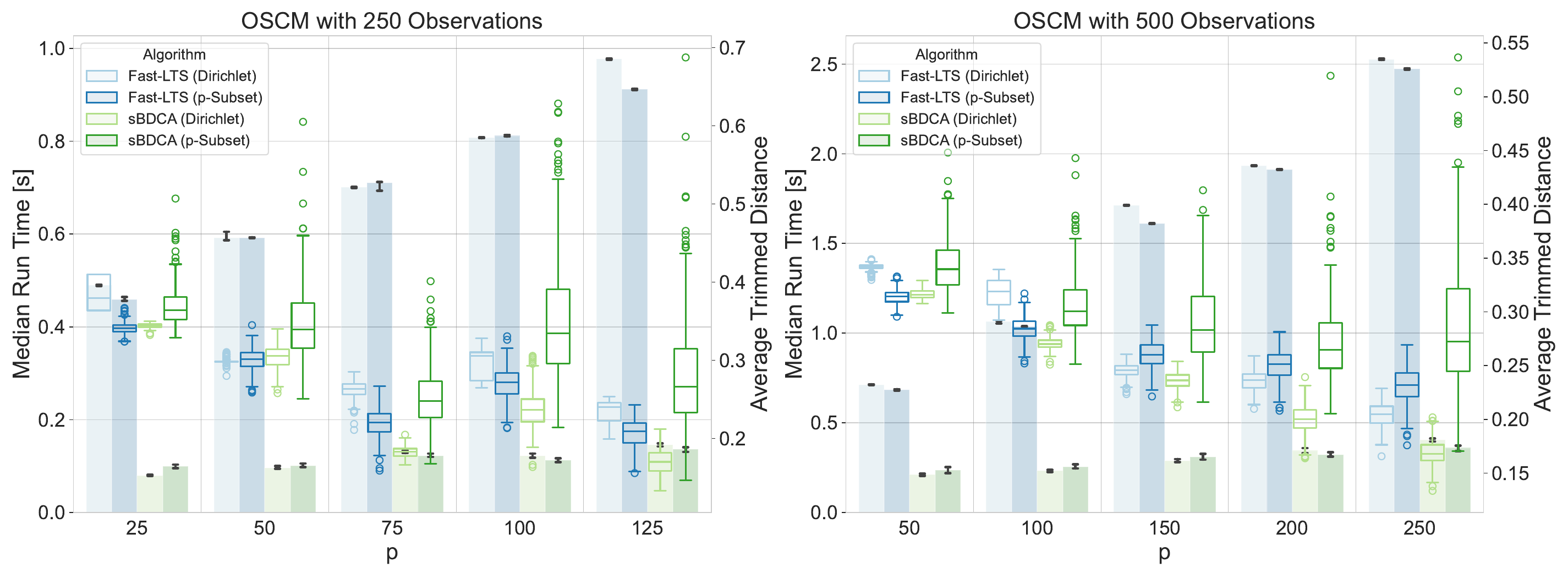}
    \caption{Comparison of different Initialization Strategies for \gls{sBDCA}.}%
    \label{fig:comp_initial}%
\end{figure}

\vspace{-0.3cm}


Notably, Fast-\gls{LTS} does not suffer from the drawback of the $p$-subset initialization. 
As explained in Subsection~\ref{sec:LTS_Challenges_Heuristic}, the heuristic internally generates 500 random $p$-subsets during each initialization, which are processed separately through the internal sorting procedure. 
After a few iterations, only the most promising results are retained and run until convergence.  In the end, the final results are compared once more, and only the best one serves as output. 
This design increases the likelihood of obtaining at least one clean subset without outliers and ensures a robust end result. 
In general, such a ``The More, The Merrier''-approach was also suggested by other researches, e.g., \citet[p. 104]{Critchley2010}, \citet[p. 206]{Rousseeuw1997} and \citet[p. 198]{hawkins1984}, and could improve the solution quality of \gls{sBDCA} under the $p$-subset initialization. 
However, it is less practical from a computational perspective, as the underlying optimization procedure is more costly than the sorting strategy of Fast-\gls{LTS}. 
For example, in Figure~\ref{fig:comp_initial} it already can be seen that if the results of 250 starting points were combined to one by selecting the best solution, the computation time of all attempts also needed to be summed up. 
This would significantly increase the overall runtime of \gls{sBDCA}, and make it slower than Fast-\gls{LTS}.


The previous toy examples demonstrated that, when limited to a single starting point, an alternative initialization strategy does not improve the objective function value of the local solution found by \gls{sBDCA}. 
Therefore, we now address how the potentially misleading direction provided by the gradient of $f$ could be corrected. 
For this purpose, we revisit the different types of outliers introduced in Subsection~\ref{sec:outliers_in_regression}. 
Generally, an instance is classified as an outlier if it deviates from the true underlying regression line \textit{or} exhibits an unusual combination of independent variables. 
Observations that meet both criteria are known as \textit{bad leverage points}, which are particularly problematic as they can significantly distort the regression line [cf. \citet[p. 1]{wilcox2023}]. 
This can lead to \textit{swamping effects}, where non-outlying observations have larger residuals than actual outliers [cf. \citet[p. 508]{Habshah2009}]. 
However, while the residual of an observation indicates its distance from the regression line (outliers in the $\mathbf{y}$-direction), it does not provide direct information about unusual combinations of independent variables (outliers in the $\mathbf{x}$-direction). 
To address this limitation, we could combine the squared residual, given by $\nabla f$,  with a measure that quantifies the influence of the covariates. 
In classical linear regression, this information can be available in the \textit{hat matrix}, which is formally defined as
\begin{equation}
    \mathbf{H} := \mathbf{X} (\mathbf{X}^\top \mathbf{X})^{-1} \mathbf{X}^\top  \in \mathcal{M}_{n \times n}(\mathbb{R}),
\end{equation}
and possesses a number of noteworthy properties [cf. \citet[p. 15]{morris2011}]. 
For example, $\mathbf{H}$ is symmetric ($\mathbf{H} = \mathbf{H}^\top$), idempotent ($\mathbf{H}^2 = \mathbf{H}$) and positive semi-definite ($\mathbf{H} \succeq \mathbf{0}_{n \times n}$) [cf. \citet[p. 120]{Fahrmeir2021}; \citet[p. 196]{Hocking2003}]. 
Originally, the hat matrix was introduced by John W. Tukey who derived its name based on the fact that the matrix maps $\mathbf{y}$ into~$\hat{\mathbf{y}}  := \mathbf{H}\mathbf{y} \in \mathbb{R}^n$ [cf. \citet[pp. 180--181]{Fahrmeir2021}; \citet[p. 17]{Hoaglin1978}]. 
The diagonal elements of this matrix are referred to as \textit{leverage statistics/scores} or \textit{hat values} [cf. \citet[p. 99]{James2021}; \citet[pp. 398--399]{fox2019}]. 
In outlier diagnostics, they are of particular interest as they can be interpreted as outlier score in $\mathbf{x}$-direction and reveal how strongly individual instances have influenced their own predictions [cf. \citet[p. 13]{DEMAESSCHALCK20001}]. 
For more details about the hat values, the interested reader is referred to Appendix \ref{sec:hat_values}.


In our \gls{DC} programming algorithm, the hat values can be used to deduce a Quasi-Newton-type correction of the direction given by the gradient $\nabla f$. 
For this purpose, we first observe that the hat matrix of the \gls{LTS} problem slightly deviates from $\mathbf{H}$ as it has the form 
\begin{equation}
    \Tilde{\mathbf{H}} := \mathbf{X} (\mathbf{X}^\top \mathbf{Z} \mathbf{X})^{-1} \mathbf{X}^\top \mathbf{Z} \in \mathcal{M}_{n \times n}(\mathbb{R}),
\end{equation}
where we denote the individual elements by $\Tilde{\mathbf{H}} = ( \tilde{h}_{ij} )_{1 \leq i, j \leq n}$. 
Moreover, the hat values $0 \leq \tilde{h}_{ii} \leq 1$ are only bounded from below by zero as the weights of some observations can be equal to zero. 
Based on this information, we can then construct the following diagonal scaling matrix
\begin{equation}
    \mathbf{B}_k = \text{diag} \left (\frac{1}{(1 - \tilde{h}_{11, k})^\theta}, \ldots, \frac{1}{(1 - \tilde{h}_{nn, k})^\theta} \right) \in \mathcal{M}_{n \times n} (\mathbb{R}),
\end{equation}
where $\theta \in \{ a \in \mathbb{R}_+ \ | \ \forall i:  (1 - \tilde{h}_{ii, k})^{a} > 0 \ \text{and} \ \frac{r^2_i}{(1 - \tilde{h}_{ii, k})^{a}} < \infty\}$, and $\tilde{h}_{ii, k}$ represents the $i$-th diagonal element of $\Tilde{\mathbf{H}}_k$ calculated based on $\mathbf{z}_k$. 
As this matrix and its inverse are positive definite, i.e., 
\begin{equation}
\mathbf{B}_k \succeq \underset{i}{\min} \Biggl \{ \frac{1}{(1 - \tilde{h}_{ii, k})^\theta} \biggr \} \cdot \mathbf{I}_n \quad \text{and} \quad \mathbf{B}_k^{-1} \succeq \underset{i}{\min} \biggl \{  (1 - \tilde{h}_{ii, k})^\theta \biggr \}  \cdot \mathbf{I}_n, 
\end{equation}
it can be applied as a preconditioner to the gradient of $f$.
This procedure can be interpreted as a standardization of the squared residuals to the same variance [cf. Appendix \ref{sec:hat_values}].
The negative hat values can also be seen as an approximation of the diagonal elements of the Hessian  
\begin{equation}
    \frac{\partial^2 f \left( \mathbf{z} \right)}{\partial z_i \partial z_i} = - 2 \cdot r_i^2 \cdot \mathbf{x}_i^\top \left( \mathbf{X}^\top \mathbf{Z} \mathbf{X}\right)^{-1} \mathbf{x}_i.
\end{equation}
Then, instead of exactly solving the strongly convex subproblem at the beginning of the optimization procedure, we can obtain $\Tilde{\mathbf{y}}_k$ based on $P_{\Delta_h} \bigl ( \mathbf{z}_k - \eta_k \cdot \mathbf{B}_k \nabla f (\mathbf{z}_k) \bigr)$, where $\eta_k$ again must be reasonably small,  and $\underset{i}{\min} \ (1 - \tilde{h}_{ii, k})^\theta$ sufficiently greater than zero.
Furthermore, $-\mathbf{B}_k \nabla f (\mathbf{z}_k)$ must be in the cone $\text{co} \bigl\{ -\nabla f (\mathbf{z}_k)$, $P_{\Delta_h} \bigl (- \nabla f (\mathbf{z}_k) \bigr) \bigr\}$ to theoretically guarantee that we reduce the objective function value, i.e., $f(\Tilde{\mathbf{y}}_k) < f(\mathbf{z}_k)$ [cf. \citet[Proposition~1]{Gafni1984}]. 
Even though $\Tilde{\mathbf{y}}_k$ is no longer the exact solution of \eqref{eq:subproblem_lts}, we can afterwards still perform the line search along the direction $\mathbf{d}_k$ due to Proposition~\ref{prop:descent_direction_dk}. 


\vspace{0.1cm}

\begin{prop}{prop:descent_direction_dk}
    \vspace{-0.15cm}
    If $f(\Tilde{\mathbf{y}}_k) < f(\mathbf{z}_k)$, then $\mathbf{d}_k = \Tilde{\mathbf{y}}_k - \mathbf{z}_k$ is a descent direction at $f(\Tilde{\mathbf{y}}_k)$, i.e., 
    \begin{equation}
        \nabla_{\mathbf{d}_k} f(\Tilde{\mathbf{y}}_k) = \mathbf{d}_k^\top \nabla f(\Tilde{\mathbf{y}}_k) < 0,
    \end{equation}
    and a line search can be performed along this direction.
\end{prop}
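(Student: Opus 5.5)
The plan is to use only the concavity of $f$ (Theorem~\ref{theorem:concave_objective}) and the first-order overestimation property of concave functions, applied in the ``reverse'' direction from $\Tilde{\mathbf{y}}_k$ back to $\mathbf{z}_k$. We will not need $\Tilde{\mathbf{y}}_k$ to be the exact solution of \eqref{eq:subproblem_lts}. The hypothesis $f(\Tilde{\mathbf{y}}_k) < f(\mathbf{z}_k)$ is the only information about the preconditioned step that enters.

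\textbf{Step 1: apply concavity at $\Tilde{\mathbf{y}}_k$.} Since $f$ is concave and continuously differentiable on an open set containing $\Delta_h$, its tangent plane at any point is a global overestimator. Taking the tangent at $\Tilde{\mathbf{y}}_k$ and evaluating at $\mathbf{z}_k = \Tilde{\mathbf{y}}_k - \mathbf{d}_k$ gives
\begin{equation*}
f(\mathbf{z}_k) \le f(\Tilde{\mathbf{y}}_k) + (\mathbf{z}_k - \Tilde{\mathbf{y}}_k)^\top \nabla f(\Tilde{\mathbf{y}}_k) = f(\Tilde{\mathbf{y}}_k) - \mathbf{d}_k^\top \nabla f(\Tilde{\mathbf{y}}_k).
\end{equation*}
Rearranging yields $\mathbf{d}_k^\top \nabla f(\Tilde{\mathbf{y}}_k) \le f(\Tilde{\mathbf{y}}_k) - f(\mathbf{z}_k)$, and the right-hand side is strictly negative by assumption. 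This is exactly the claimed strict inequality $\nabla_{\mathbf{d}_k} f(\Tilde{\mathbf{y}}_k) < 0$. We also note that $\mathbf{d}_k \neq \mathbf{0}_n$ automatically, since otherwise the two function values would coincide.

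\textbf{Step 2: justify the line search.} Because $\nabla f$ is continuous, a negative directional derivative means $f(\Tilde{\mathbf{y}}_k + \lambda \mathbf{d}_k) < f(\Tilde{\mathbf{y}}_k)$ for small $\lambda > 0$. Concavity makes this global. The map $\lambda \mapsto f(\Tilde{\mathbf{y}}_k + \lambda \mathbf{d}_k)$ is concave with negative slope at $\lambda = 0$, so it is decreasing for all $\lambda \ge 0$. Equivalently, $f(\Tilde{\mathbf{y}}_k + \lambda\mathbf{d}_k) \le f(\Tilde{\mathbf{y}}_k) + \lambda\, \mathbf{d}_k^\top \nabla f(\Tilde{\mathbf{y}}_k)$, which is the analogue of \eqref{eq:rho_in_Armijo}.

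Feasibility must also be checked. Both $\mathbf{z}_k$ and $\Tilde{\mathbf{y}}_k$ lie in $\Delta_h$, the latter as the output of the projection $P_{\Delta_h}$. Since $\mathbf{1}_n^\top \mathbf{d}_k = 0$, the points $\Tilde{\mathbf{y}}_k + \lambda \mathbf{d}_k$ satisfy the equality constraint for every $\lambda$. The box constraints then define a closed interval $[0,\bar{\lambda}_k]$ of admissible step sizes. Hence the simplified line search of Algorithm~\ref{alg:sBDCA} is well defined and does not increase $f$.

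\textbf{Main obstacle.} The only delicate point is domain and differentiability: $f$ involves $(\mathbf{X}^\top \mathbf{Z}\mathbf{X})^{-1}$, so we must ensure this matrix is invertible at $\mathbf{z}_k$, at $\Tilde{\mathbf{y}}_k$, and on the segment between them. We plan to handle this with the standing assumption behind Theorem~\ref{theorem:concave_objective} (nonsingularity on the relevant part of $\Delta_h$), together with convexity of $\Delta_h$. Under that assumption the tangent inequality holds on the segment, which is all the argument uses.

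A secondary subtlety is $\bar{\lambda}_k = 0$, which occurs when $\Tilde{\mathbf{y}}_k$ already lies on the boundary in direction $\mathbf{d}_k$. In that case the line search simply returns $\Tilde{\mathbf{y}}_k$, so descent relative to $\mathbf{z}_k$ is still guaranteed by the hypothesis.
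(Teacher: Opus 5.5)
Your proof is correct and its core step is the paper's argument: you apply the concave tangent inequality at $\Tilde{\mathbf{y}}_k$, evaluate it at $\mathbf{z}_k$, and combine it with $f(\Tilde{\mathbf{y}}_k) < f(\mathbf{z}_k)$ to force $\mathbf{d}_k^\top \nabla f(\Tilde{\mathbf{y}}_k) < 0$. Your Step 2 is a useful addition that the paper leaves implicit: it justifies the ``line search can be performed'' clause via monotone decrease along the ray (one-dimensional concavity) and a closed interval of feasible step sizes (using $\mathbf{1}_n^\top \mathbf{d}_k = 0$).
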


\begin{prf}{prf:descent_direction_dk}
    As $f$ is a concave and continuously differentiable function on $\Delta_h$, we can derive the inequality
    \begin{equation}
        f(\Tilde{\mathbf{y}}_k) < f(\mathbf{z}_k) \leq f(\Tilde{\mathbf{y}}_k) - \langle \nabla f(\Tilde{\mathbf{y}}_k), \mathbf{d}_k \rangle,
    \end{equation}
    which can only hold if $\langle \nabla f(\Tilde{\mathbf{y}}_k), \mathbf{d}_k \rangle < 0$.
\end{prf}

\vspace{-0.2cm}


In the new update scheme, the parameter $\theta$ plays a special role as it determines the importance of the hat value compared to the squared residual. 
The bigger we chose the parameter, the more the residuals of instances with higher hat values are inflated, and accordingly these observations are targeted for the weight reduction. 
Especially at the beginning it appears to be reasonable to select a bigger $\theta$ to underestimate the variance and to aim for a regression line that prioritizes low leverage scores over small residuals. 
Afterwards, we can keep $\theta$ constant as long as the projection step further reduces the objective function value. 
Otherwise, we have to reduce the parameter to fulfill the conditions that ensure an improvement of $f$ through the projection procedure. 
Furthermore, if~$\theta$ is still bigger than zero in the last iterations, we have to enforce that the parameter is set to zero. 
This ensures that we just perform the normal \gls{sBDCA} in the end, which guarantees that we find a local solution of the \gls{LTS} problem with (sub)linear rate. 
Algorithm~\ref{alg:sBDCA_with_precond} illustrates how this idea could be implemented in practice.


\begin{algorithm}[H]
\vspace{0.05cm}
  \KwInput{Initial $\mathbf{z}_0 \in \Delta_h$, $\varepsilon > 0$, $K \in \mathbb{N}$, $\rho_0, \rho_{\text{min}}, \rho_{\text{max}}, \nu > 0$, $\theta \in \mathbb{N}$}  
  \vspace{0.1cm}
  $k \gets 0$ \\
  \vspace{0.1cm}
  \For{$k \leq K$}
   {
   \vspace{0.1cm}
   $\eta_k \gets \frac{1}{2 \cdot \rho_k}$ \\
   \vspace{0.1cm}
   $\Tilde{\mathbf{y}}_k \gets P_{\Delta_h} \Bigl( \mathbf{z}_k - \eta_k  \cdot \mathbf{B}_k \nabla f(\mathbf{z}_k) \Bigr)$ \\
   \vspace{0.1cm}
    $\mathbf{d}_k \gets \Tilde{\mathbf{y}}_k - \mathbf{z}_k$ \\
    \vspace{0.1cm}
    \If{$\lVert \mathbf{d}_k \rVert \leq \varepsilon$}
    {\vspace{0.1cm}
    \If{$\theta = 0$}{
    \vspace{0.1cm}
    \textbf{break}
    }
    \vspace{0.1cm}
    $\bar{\lambda}_k \gets 0$
    }\Else{   
    \vspace{0.1cm}
    $\bar{\lambda}_k \gets  \underset{( \Tilde{\mathbf{y}}_k + \bar{\lambda} \cdot \mathbf{d}_k  ) \in \Delta_h}{\text{arg max}} \  \bar{\lambda}$ \\
    \vspace{0.1cm}
    }
    $\mathbf{z}_{k + 1} \gets \Tilde{\mathbf{y}}_k + \bar{\lambda}_k \cdot \mathbf{d}_k$ \\
    \vspace{0.1cm}
    \If{$\Bigl( f(\mathbf{z}_{k + 1}) > f(\mathbf{z}_k) \Bigr)$ \textbf{\upshape or} $\Bigl( \theta > 0 \ \textbf{\upshape and} \ \lVert \mathbf{d}_k \rVert \leq \varepsilon \Bigr)$}{
    $\mathbf{z}_{k + 1} \gets \Tilde{\mathbf{y}}_k$ \\
    \vspace{0.1cm}
    $\theta \gets \theta - 1$
    }
    $\rho_{k + 1} \gets \min \Bigl \{ \max  \bigl( \nu \cdot \rho_{k}, \rho_{\text{min}} \bigr), \rho_{\text{max}} \Bigr \} $ \\
    \vspace{0.1cm}
    $k \gets k + 1$ \\
    \vspace{0.1cm}
   }
   \KwOutput{$\mathbf{z}_k$}
   \vspace{0.05cm}
\caption{\gls{sBDCA} with Preconditioning}\label{alg:sBDCA_with_precond}
\end{algorithm}  



To assess the impact of the proposed correction technique, we repeat the ten toy examples once more. 
The corresponding results are shown in Figure \ref{fig:comp_precond},
where the two subplots again compare the \gls{ATD} (right y-axis, box plots) and the median computation time (left y-axis, bar plots) of all feasible outcomes for the different combinations of $n$ and $p$.
When considering computational efficiency, we observe that preconditioning increases the computation time in all settings, which can be attributed to a higher number of iterations required for convergence.
Nevertheless, both \gls{sBDCA} versions still outperform the runtime of Fast-\gls{LTS}, particularly in high-dimensional settings with many independent variables. 
In terms of solution quality, the correction technique significantly reduces the variability, which allows \gls{sBDCA} to remain highly competitive even when initialized with a single $p$-subset.
Furthermore, \gls{sBDCA} with preconditioning consistently outperforms Fast-\gls{LTS} and \gls{sBDCA} without preconditioning in terms of median \gls{ATD} independent of the initialization strategy. 
In high-dimensional settings, this difference becomes even more pronounced, where Algorithm~\ref{alg:sBDCA_with_precond} achieves up to 50\% smaller objective function values than Fast-\gls{LTS}.
As mentioned in Subsection~\ref{sec:LTS_Challenges_Heuristic}, the core procedure of Fast-\gls{LTS} is not inherently robust and highly depends on a good initialization. 
In settings with many independent variables, the $\mathbb{P} \bigl( \mathbf{p}_{0} \text{ contains no outlier} \bigr)$ decreases, such that the 500 starting points are no longer sufficient to output a solution that is close to the best known \gls{ATD}.
This supports the results of  \citet[p. 2505]{TORTI2012}, who highlight that 500 starting points may not be enough to explore all local minima.


\begin{figure}[H]%
    \centering
    \captionsetup{justification=centering}
    \includegraphics[width=1\textwidth]{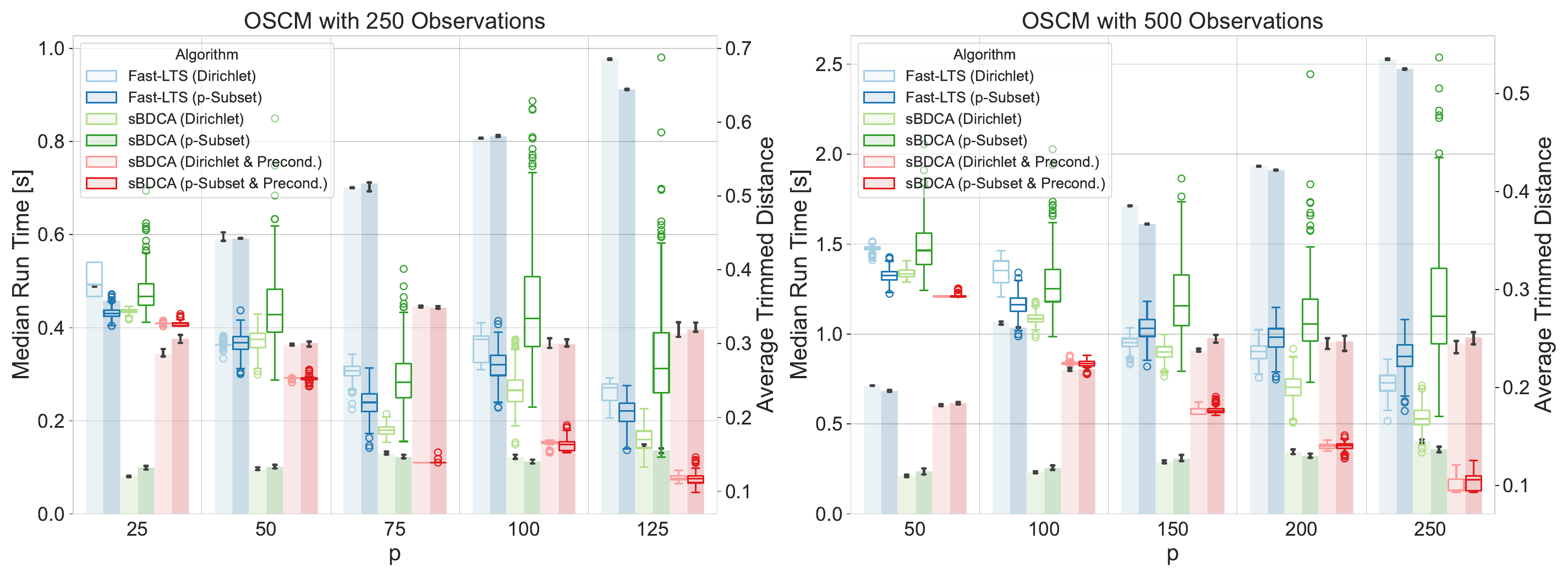}
    \caption{Influence of Preconditioner on \gls{ATD} and Computation Time.}%
    \label{fig:comp_precond}%
\end{figure}

\vspace{-0.2cm}


\section{Applications to Robust Regression}\label{sec:Application_to_Robust_Regression}


In the theoretical part of this paper, preliminary numerical results were already presented in Subsection~\ref{sec:LS_and_sDC} and Subsection~\ref{sec:initialization_and_preconditioner} to showcase the influence of the successive \gls{DC} decompositions, the choice of starting points, and the proposed preconditioning matrix.
To provide a more comprehensive evaluation of the algorithmic performance, this section extends the empirical analysis by considering multiple synthetic and real-world datasets with leverage points and regression outliers under varying combinations of $n$ and $p$. 
Note that all experiments are again performed in Python using the \href{https://www.southampton.ac.uk/research/facilities/iridis-research-computing-facility}{IRIDIS 5 Research Computing Facility}  where each compute node is equipped with an Intel Xeon Gold 6138 processor (40 CPUs at 2.0 GHz) and 192 GB of DDR4 memory. 
To ensure a transparent and reproducible experimental set-up, the complete implementation is also publicly available via the project’s GitHub repository: \href{https://github.com/mlthormann/LTS-With-DC-Programming/}{LTS-with-DC-Programming \faExternalLink}.



\subsection{Smaller Regression Examples}


In this subsection, the focus is on smaller regression examples that were collected by \citet{rousseeuw1987} and \citet{Hadi1993}. 
In the academic literature, these datasets are often used to obtain an initial judgment about the algorithmic performance, as the global solutions can still be computed with a manageable computational effort. 
For our numerical study, we initially only select the Fire Claims, First Word, International Calls, and Star Cluster datasets [cf.~\citet[pp. 26, 29, 47, 50]{rousseeuw1987}] to visually compare the outputs of \gls{OLS}, Fast-\gls{LTS}, and \gls{sBDCA} with preconditioning. 
Since the latter two algorithms do not produce deterministic results, they are initialized 500 times and only the median results are illustrated in Figure~\ref{fig:Simple_LR}. 
Similarly, the regression lines for \gls{RANSAC}, Huber regression, Quantile regression and the \gls{TS} estimator are shown as these approaches are widely used in the literature [cf. Subsection~\ref{sec:robust_estimators_in_reg_analysis}] and are implemented in the \texttt{sklearn} Python library [cf. \citet{scikit-learn}].


\begin{figure}[H]%
    \centering
    \captionsetup{justification=centering}
    \includegraphics[width=1\textwidth]{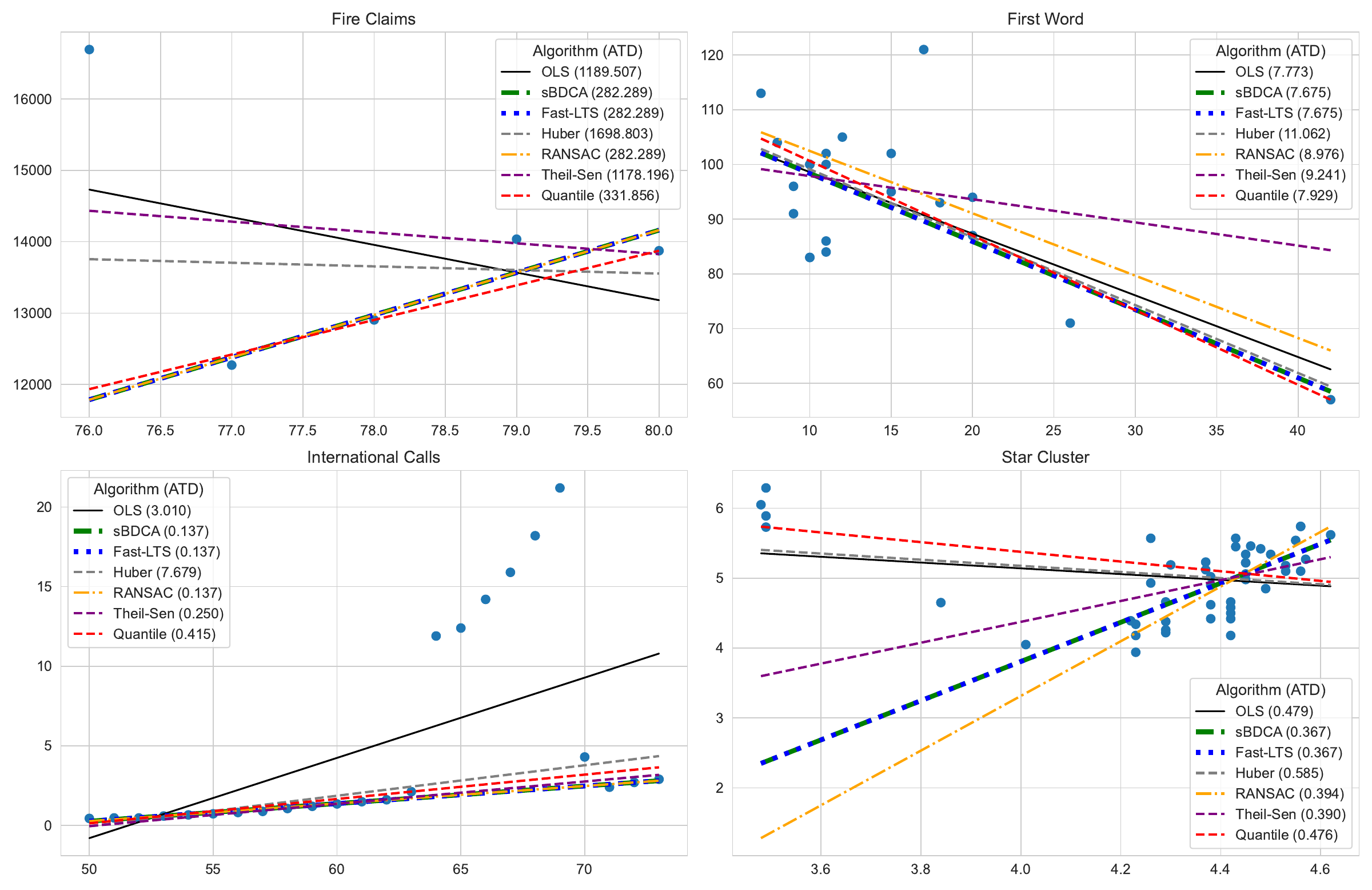}
    \caption{Simple Linear Regression Examples.}%
    \label{fig:Simple_LR}%
\end{figure}

\vspace{-0.2cm}


The regression lines in Figure~\ref{fig:Simple_LR} indicate that Fast-\gls{LTS} and \gls{sBDCA} with preconditioning yield identical median results for the chosen datasets.
Overall, both approaches do not encounter difficulties in separating outliers from the remaining observations and outperform the other estimators in terms of \gls{ATD}. 
Beyond this conclusion, \gls{RANSAC} emerges as another approach that delivers consistently robust results across the datasets, but exhibits minor deficiencies in the two subplots on the right side of Figure~\ref{fig:Simple_LR}.
A comparable performance is also achieved by the Quantile regression, but the estimator completely breaks down for the Star Cluster dataset, where the four isolated points in the top right corner cause the distortion.
An even worse performance is provided by the three remaining estimators that produce a distorted regression line for at least two out of the four selected datasets. 
While \gls{OLS} and Huber regression have problems to handle the bad leverage points present in the Fire Claims and Star Cluster datasets, the \gls{TS} estimator breaks down under the First Word and Fire Claims examples.


After obtaining a visual impression of the performance, we now evaluate the solution quality in terms of \gls{ATD} for all datasets collected by \citet{rousseeuw1987} and \citet{Hadi1993}. 
For this purpose, the estimators with non-deterministic outputs are again initialized 500 times per dataset, and the median \gls{ATD} is computed. 
The corresponding results are presented in Table \ref{tab:Small_Regression_Examples}, where the best outcome for each dataset is highlighted in bold. 
Overall, the performance of Fast-\gls{LTS} and \gls{sBDCA} with preconditioning are again very similar, with both approaches yielding the lowest \gls{ATD} for the majority of datasets. 
Occasionally, the state-of-the-art heuristic achieves a slightly lower \gls{ATD}, as illustrated, for instance, in the last row of Table \ref{tab:Small_Regression_Examples}. 
This can be attributed to the fact that Fast-\gls{LTS} generates 500 random $p$-subsets per initialization ($500 \cdot 500 = 250,000$ starting points in total), which are sufficient to explore all local optima for very small datasets. 
In contrast, \gls{sBDCA} with preconditioning creates only a single starting point per initialization.
As a result, in four of the 34 cases, it converges to a local solution that is slightly inferior to the global minimum, but it remains the second-best option for these datasets.
Beyond this observation, the performance of the remaining estimators depends on the specific dataset. 
As already discussed in relation to Figure~\ref{fig:Simple_LR}, \gls{RANSAC} often delivers robust results but achieves the lowest \gls{ATD} only for a small number of datasets.
Moreover, \gls{RANSAC} does not consistently outperform the Huber regression, \gls{TS} estimator and Quantile regression in terms of \gls{ATD}.
The overall worst performance is provided by \gls{OLS}, which often produces results with very high \gls{ATD}. 


\begin{table}[H]
    \scriptsize
    \centering
    \captionsetup{justification=centering}
    \caption{Median \acrfull{ATD} per Dataset and Algorithm.}
    \label{tab:Small_Regression_Examples}

    \begin{tabular}{>{\raggedright}p{2.7475cm} >{\raggedleft}p{0.25cm} >{\raggedleft}p{0.05cm} >{\raggedleft}p{0.25cm} | ccc >{\centering}p{0.95cm} >{\centering}p{0.95cm} cc}
    \toprule
    Data Set Name & n & p & h & {\tiny Fast-\gls{LTS}} & Huber & \gls{OLS} & {\tiny Quantile} & {\tiny \gls{RANSAC}} & \gls{TS} & \gls{sBDCA} \\
    \midrule
    Barnett \& Lewis & 7 & 1 & 5 & \textbf{2.577} & 3.426 & 269.6 & 3.033 & \textbf{2.577} & 3.033 & \textbf{2.577} \\
    Blood Pressure & 10 & 1 & 9 & \textbf{6.190} & 6.215 & 7.766 & 6.254 & 7.252 & 6.191 & \textbf{6.190} \\
    Cushny \& Peebles & 10 & 1 & 8 & \textbf{2.784} & 2.813 & 4.630 & 2.828 & 2.828 & 2.828 & \textbf{2.784} \\
    Kootenay River & 13 & 1 & 12 & \textbf{1.818} & 1.900 & 6.705 & 1.865 & \textbf{1.818} & 1.834 & \textbf{1.818} \\
    Lactic Acid Concen. & 20 & 1 & 16 & \textbf{0.625} & 0.666 & 0.701 & 0.688 & 0.701 & 0.668 & \textbf{0.625} \\
    Plastic Material & 10 & 1 & 8 & \textbf{0.387} & 0.418 & 0.503 & 0.410 & 0.434 & 0.393 & \textbf{0.387} \\
    Ranges of Projectiles & 8 & 1 & 6 & \textbf{37.84} & 44.56 & 85.65 & 39.10 & 39.81 & 42.05 & \textbf{37.84} \\
    Semidiameters Venus & 15 & 1 & 13 & \textbf{0.309} & 0.309 & 0.311 & 0.309 & 0.323 & 0.309 & \textbf{0.309} \\
    \midrule 
    Accidents & 7 & 2 & 6 & \textbf{493.5} & 513.5 & 1419 & 538.0 & \textbf{493.5} & 623.0 & \textbf{493.5} \\
    Body \& Brain Weight & 28 & 2 & 25 & \textbf{0.696} & 0.760 & 1.134 & 0.762 & 0.713 & 0.709 & \textbf{0.696} \\
    Fire Claims & 5 & 2 & 4 & \textbf{282.3} & 852.8 & 1190 & 331.9 & \textbf{282.3} & 1178 & \textbf{282.3} \\
    First Word & 21 & 2 & 19 & \textbf{7.675} & 7.714 & 7.773 & 7.929 & 8.976 & 9.241 & \textbf{7.675} \\
    Inflation in China & 9 & 2 & 8 & \textbf{3.577} & 4.025 & 60.96 & 3.972 & 4.718 & 4.162 & \textbf{3.577} \\
    International Calls & 24 & 2 & 17 & \textbf{0.137} & 0.627 & 3.010 & 0.415 & \textbf{0.137} & 0.250 & \textbf{0.137} \\
    Monthly Payments & 12 & 2 & 9 & \textbf{0.581} & 0.690 & 4.572 & 0.675 & 0.670 & 0.801 & \textbf{0.581} \\
    Pension Funds & 18 & 2 & 16 & \textbf{0.390} & 0.390 & 0.390 & 0.397 & 0.394 & 0.392 & \textbf{0.390} \\
    Pilot Plant & 20 & 2 & 19 & \textbf{1.187} & 1.317 & 11.95 & 1.407 & \textbf{1.187} & 1.222 & \textbf{1.187} \\
    Siegel's Example & 9 & 2 & 6 & \textbf{0.000} & 0.094 & 0.133 & 0.092 & \textbf{0.000} & 0.001 & \textbf{0.000} \\
    Star Cluster & 47 & 2 & 42 & \textbf{0.367} & 0.477 & 0.479 & 0.476 & 0.394 & 0.390 & \textbf{0.367} \\
    \midrule 
    Cloud Point & 19 & 3 & 16 & \textbf{0.207} & 0.245 & 0.292 & 0.223 & 0.292 & 0.238 & \textbf{0.207} \\
    Delivery Time & 25 & 3 & 24 & \textbf{2.273} & 2.457 & 2.729 & 2.393 & 2.637 & 2.319 & \textbf{2.273} \\
    Hadi and Simonoff & 25 & 3 & 22 & \textbf{0.786} & 0.918 & 0.988 & 0.976 & 0.988 & 0.928 & \textbf{0.786} \\
    Heart Catheterization & 12 & 3 & 7 & \textbf{0.312} & 0.945 & 0.999 & 0.911 & 0.940 & 0.549 & 0.369 \\
    Phosphorus Content & 18 & 3 & 11 & \textbf{3.543} & 3.967 & 6.270 & 4.567 & 4.486 & 5.517 & \textbf{3.543} \\
    \midrule 
    Air Quality & 31 & 4 & 23 & \textbf{9.103} & 11.23 & 155.8 & 11.54 & 11.45 & 11.67 & \textbf{9.103} \\
    Education Expend. & 50 & 4 & 49 & \textbf{37.48} & 37.84 & 40.66 & 39.85 & 43.99 & 41.59 & \textbf{37.48} \\
    Hawkins-Bradu-Kass & 75 & 4 & 65 & \textbf{0.540} & 0.616 & 1.073 & 0.586 & 0.628 & 1.229 & 0.561 \\
    Salinity & 28 & 4 & 19 & \textbf{0.396} & 0.430 & 0.587 & 0.414 & 0.482 & 0.448 & \textbf{0.396} \\
    Stackloss & 21 & 4 & 16 & \textbf{0.892} & 1.140 & 1.822 & 1.003 & 1.002 & 1.017 & \textbf{0.892} \\
    \midrule 
    Aircraft & 23 & 5 & 14 & \textbf{1.604} & 2.482 & 3.275 & 2.299 & 2.172 & 2.281 & 1.625 \\
    Exact Fit & 25 & 5 & 20 & \textbf{0.000} & \textbf{0.000} & 0.948 & \textbf{0.000} & 0.471 & 0.005 & \textbf{0.000} \\
    \midrule 
    Coleman & 20 & 6 & 17 & \textbf{0.555} & 0.604 & 0.931 & 0.649 & 0.628 & 0.809 & \textbf{0.555} \\
    Wood Specific Gravity & 20 & 6 & 16 & \textbf{0.006} & 0.012 & 0.014 & 0.013 & \textbf{0.006} & 0.018 & 0.009 \\
    \bottomrule
    \end{tabular}
    
\end{table}



\subsection{Synthetic Data Experiments}\label{sec:synthetic_data_exp}


After comparing the algorithms on smaller regression examples with $n < 100$ and $p \leq 6$, this subsection examines medium- and large-scale synthetic datasets to evaluate performance relative to the number of instances and input variables. 
To ensure a fair and unbiased comparison, the numerical experiments follow the recommendations of \citet{beiranvand2017}.
Consistent with these best practices, performance is examined across three dimensions: efficiency, reliability, and output quality.
Specifically, efficiency is measured in terms of median computation time, while solution quality is assessed based on the distance to the best-known solution. 
To evaluate reliability, the median objective function values and the fraction of infeasible solutions are analyzed across 500 initializations per dataset.
Uncertainty in the point estimates is represented either by box plots or by 95\% \glspl{CI}, which are computed based on 100,000 bootstrap samples and adjusted using the Bonferroni correction [cf. \citet[pp. 655--656]{fox2015}; \citet{Armstrong2014};  \citet{Becher1993}; \citet{efron1979}; \citet{bonferroni1936}].


The evaluation of the algorithmic reliability requires generating multiple starting points, which must be selected carefully.
As \citet[p. 821]{beiranvand2017} note, biased starting points are among the main shortcomings in performance comparisons.
In particular, Subsection~\ref{sec:initialization_and_preconditioner} has already shown that the algorithmic outputs are sensitive to the initialization strategy. 
To avoid granting any algorithm an unfair advantage, we evaluate performance using both the $p$-subset and Dirichlet initialization strategies. 
Based on the same reasoning, we also considered including the core procedure of Fast-\gls{LTS} without the benefit of 500 starting points in the comparison.
However, this addition provided limited insight, as the resulting \gls{ATD} is far from competitive when the heuristic is restricted to a single starting point.  


Building on this assessment  framework, the first numerical experiment of this subsection revisits the \gls{OSCM} introduced in Subsection~\ref{sec:LS_and_sDC}.
In particular, we extend the previous comparison by generating 100 random datasets for each combination of  $n \in \{100, 250, 500, 1000\}$ and $p \in \{\lceil n \cdot 0.1 \rceil, \lceil n \cdot 0.2 \rceil, \lceil n \cdot 0.3 \rceil, \lceil n \cdot 0.4 \rceil, \lceil n \cdot 0.5 \rceil \}$. 
In total, this yields 2,000 datasets.
The proportion of outliers is set to $n_{o} \equiv \lceil n \cdot 0.1 \rceil$ to be in line with the estimate for routine datasets provided by \citet[p. 88]{hampel1973} and \citet[p. 26]{hampel1986}.
Likewise, we fix $h$ at $\lceil \frac{n + p + 1}{2} \rceil$ close to its lower bound, as the true number of outliers is typically unknown in practice.
The latter choice is based on the recommendation of \citet[p. 38]{rousseeuw2006}.


Figure~\ref{fig:atd_oscm} compares the fraction of infeasible solutions (left y-axis, bar plots) and the \gls{ATD} of all feasible outcomes (right y-axis, box plots) across the selected settings. 
Overall, the subplots confirm that \gls{sBDCA} with preconditioning achieves superior performance in terms of both solution quality and algorithmic reliability.
Compared to Fast-\gls{LTS}, the results of the proposed \gls{DC} programming algorithm are less sensitive to the initialization strategy and are closer to the best solution per dataset for each $(n,p)$ combination. 
Especially in settings with many input variables, this performance gap increases such that the median \gls{ATD} of our proposed algorithm is up to 50\% lower compared to the value of the state-of-the-art. 
In these cases, the box plots also show no overlap, underscoring the substantial performance gap. 
As discussed in Subsection~\ref{sec:LTS_Challenges_Heuristic} and Subsection~\ref{sec:initialization_and_preconditioner}, this discrepancy can be attributed to the $p$-subset initialization strategy, which leads to initial regression lines of worse quality in high-dimensional settings. 
The 500 starting points generated within each Fast-\gls{LTS} initialization are then insufficient to explore the solution landscape. 
Another limitation of the heuristic becomes evident in the lower-right subplot, where the \gls{ATD} does not  decreases monotonously as the number of input variables increases. 
This can be explained by the internal sub-sampling procedure applied to all datasets with $n > 600$, whose execution becomes more challenging in settings with many input variables.


\begin{figure}[H]%
    \centering
    \includegraphics[width=1\textwidth]{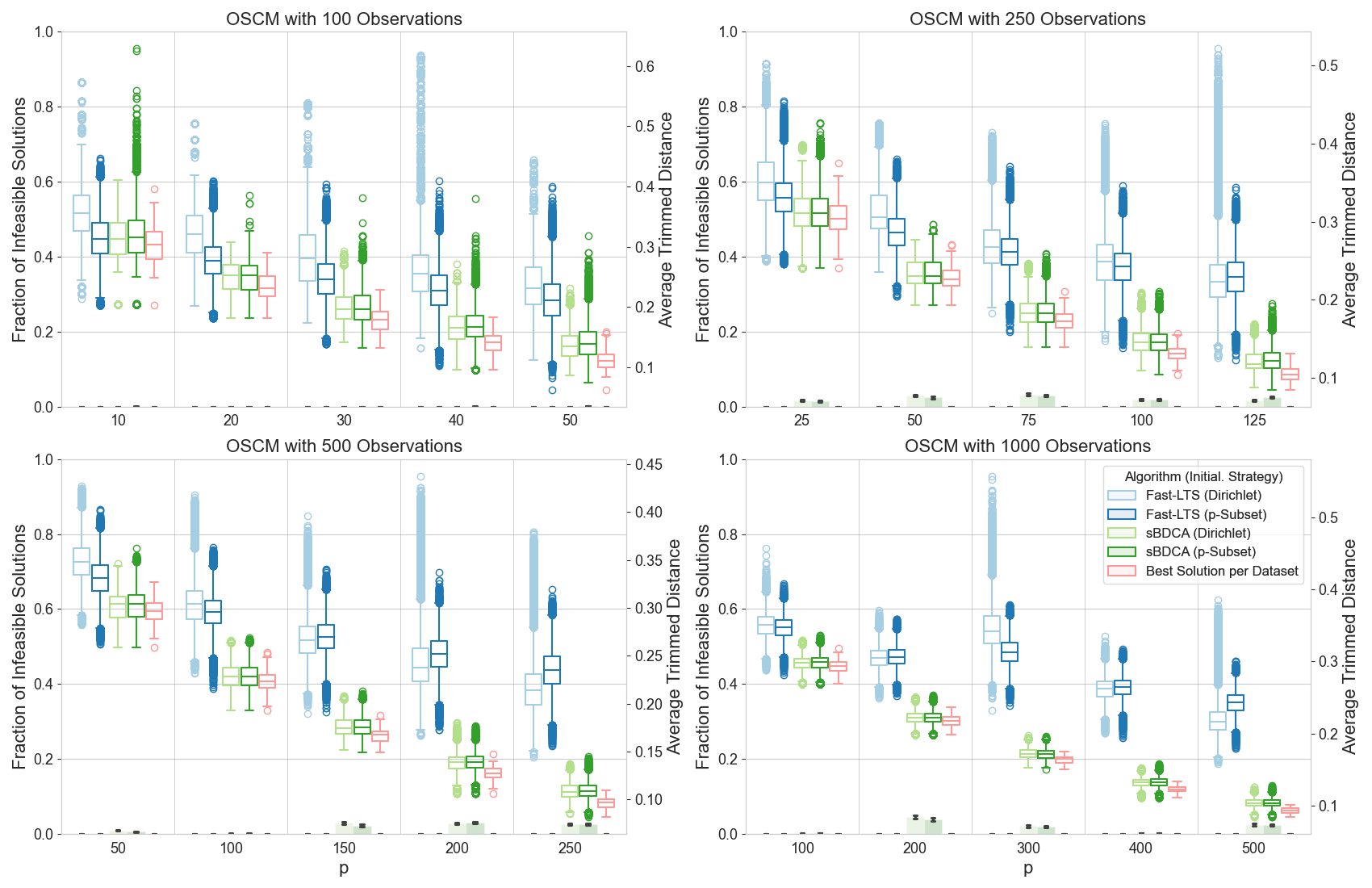}
    \caption{Performance comparison between \gls{sBDCA} with preconditioning and Fast-\gls{LTS} using the \gls{OSCM} and focusing on algorithmic reliability and output quality. 
    Each subplot displays the fraction of infeasible solutions (left y-axis, bar plots) and the \gls{ATD} of all feasible outcomes (right y-axis, box plots) across different numbers of input variables on the x-axis. 
    Results are grouped by solver type and initialization strategy (Dirichlet, $p$-subset).
    The box plots in red display the lowest \gls{ATD} among all feasible solutions per dataset independent of the solver.}%
    \label{fig:atd_oscm}%
\end{figure}

\vspace{-0.4cm}


To complete the comparative analysis for the \gls{OSCM},
Figure~\ref{fig:cpu_oscm}  compares the median iteration~/~c-step count  (left y-axis, bar plots) and the computation time of all feasible outcomes (right y-axis, box plots) across the different settings.
Overall, the results indicate that our proposed \gls{DC} programming algorithm achieves superior computational performance across a range of settings. 
In particular, for datasets with a high number of independent variables, \gls{sBDCA} with preconditioning significantly outperforms Fast-\gls{LTS}, with median computation times up to 3.25 times faster.
This performance gap can primarily be attributed to the more expensive inversion of the $p \times p$ matrix $\mathbf{X}^\top \mathbf{Z} \mathbf{X}$ during a similar number of concentration steps in the Fast-\gls{LTS} heuristic, as previously discussed in Subsection~\ref{sec:LS_and_sDC}. 
In contrast, under the specified parameter setting, the iteration count of \gls{sBDCA} shows a slight decline as the dimensionality increases, which helps to mitigate the computational burden associated with the matrix inversion.  


\begin{figure}[H]%
    \centering
    \includegraphics[width=1\textwidth]{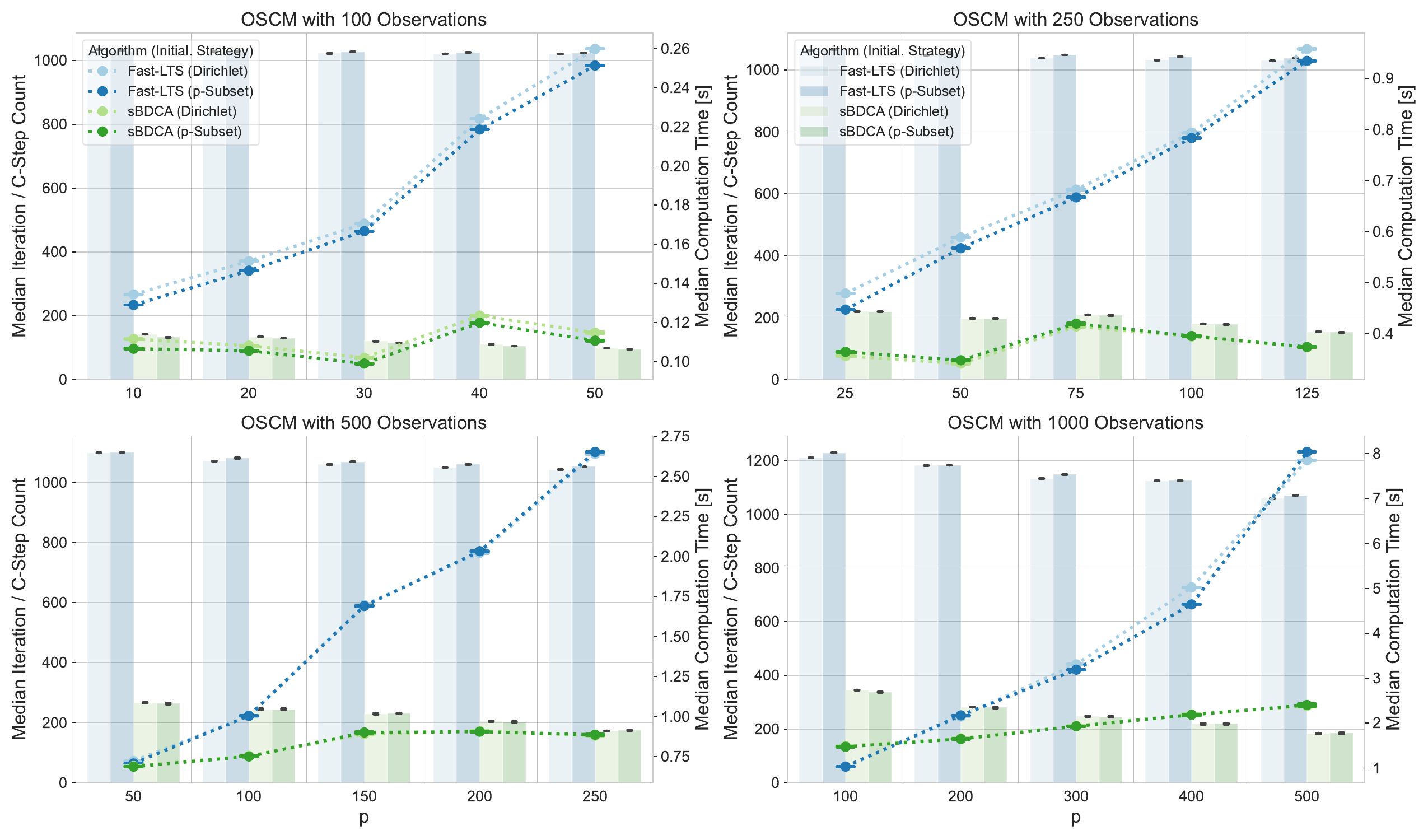}
    \caption{Performance comparison between \gls{sBDCA} with preconditioning and Fast-\gls{LTS} using the \gls{OSCM} and focusing on algorithmic efficiency. 
    Each subplot displays the median number of iterations / c-steps (left y-axis, bar plots) and the computation time of all feasible solutions (right y-axis, box plots) across different numbers of input variables on the x-axis. 
    Results are grouped by solver type and initialization strategy (Dirichlet, $p$-subset).}%
    \label{fig:cpu_oscm}%
\end{figure}

\vspace{-0.4cm}


The first numerical experiment provided several examples involving regression outliers.
To follow common practice in the academic literature, where algorithms are typically compared using data examples containing outliers in the $\mathbf{x}$- and $\mathbf{y}$-direction [cf. \citet[p.~21]{Chen2018}; \citet[p. 5]{XIE2022}; \citet[p. 2191]{Dogru2018}; \citet[p. 1329]{Nurunnabi2014}; \citet[p. 32]{AlNoor2013}; \citet[p. 114]{ROUSSEEUW1992}], the second numerical experiment is based on synthetic datasets with leverage points.
Specifically, the data examples are generated using the mechanism introduced by \citet[p. 209]{rousseeuw1987}, which has subsequently been applied by several other researchers, including \citet[p. 104]{satman2013}, \citet[p. 111]{Critchley2010}, \citet[p. 196]{Hofmann2010}, \citet[pp. 40--41]{rousseeuw2006}, and \citet[p. 114]{ROUSSEEUW1992}.
Within this framework, the core observations are generated according to
\begin{equation}\label{eq:sdm}
    y_i = x_{i1} + x_{i2} + \ldots + x_{ip} + \epsilon_i \quad \text{for all} \quad i \in \{1, \ldots, n\},
\end{equation}
where $\epsilon_i \sim \mathcal{N}(0, 1)$, $x_{i1} = 1$, and $x_{ij} \sim \mathcal{N}(0, 10^2)$ for $j \in \{2, \ldots, p\}$. 
Subsequently, outliers are introduced in the $\mathbf{x}$-direction by replacing the original $x_{i2}$ values for $n_o$ instances with random values drawn from $\mathcal{N}(100, 10^2)$. 
In our experiments, this setup is used to again generate 100 random datasets for each combination of $n \in \{100, 250, 500, 1000\}$ and $p \in \{\lceil n \cdot 0.1 \rceil, \lceil n \cdot 0.2 \rceil, \lceil n \cdot 0.3 \rceil, \lceil n \cdot 0.4 \rceil, \lceil n \cdot 0.5 \rceil\}$, with fixed values for $n_c \equiv \lceil n \cdot 0.95 \rceil$ and $h \equiv \lceil \frac{n + p + 1}{2} \rceil$, and results in a total of 2,000 datasets with (bad) leverage points. 
In the following paragraphs, we refer to this data-generating process as the \gls{DMLP}.


Figure~\ref{fig:atd_dmlp} compares the fraction of infeasible solutions (left y-axis, bar plots) and the \gls{ATD} of all feasible outcomes (right y-axis, box plots) across the selected settings for the \gls{DMLP}.  
To better observe the performance differences, the \gls{ATD}-axis is restricted to the interval $[0.05, 0.45]$. 
A version without this restriction is provided in Appendix~\ref{sec:further_num_results}.
Overall, the results again indicate that \gls{sBDCA} with preconditioning delivers superior performance in terms of both solution quality and algorithmic reliability.
Our proposed \gls{DC} programming algorithm not only provides the lowest median \gls{ATD} for most datasets, but also delivers robust results in settings with many independent variables where the Fast-\gls{LTS} estimator breaks down. 
This performance gap is most pronounced in the subplots for datasets with $n \in \{500, 1000\}$ and $p \in \{\lceil n \cdot 0.4 \rceil, \lceil n \cdot 0.5 \rceil\}$.
In these cases, the median \gls{ATD} of \gls{sBDCA} with preconditioning remains near 0.1, while the median results of Fast-\gls{LTS} start above 0.25 and go up to 1.5.  
This discrepancy can again be explained by the worse quality of the initial regression lines in high-dimensional settings, from which the non-robust core procedure of the heuristic has limited capacity to recover. 
Due to the same reasoning, the performance of the state-of-the-art is also significantly more sensitive to the chosen initialization across all settings, regardless of the specific value of $p$ and $n$.
In contrast, \gls{sBDCA} with preconditioning has the capacity to escape from a suboptimal initialization due to the in Subsection~\ref{sec:initialization_and_preconditioner} introduced hat-value-based scaling that inflates the residuals of potential leverage points. 


\begin{figure}[H]%
    \centering
    \includegraphics[width=1\textwidth]{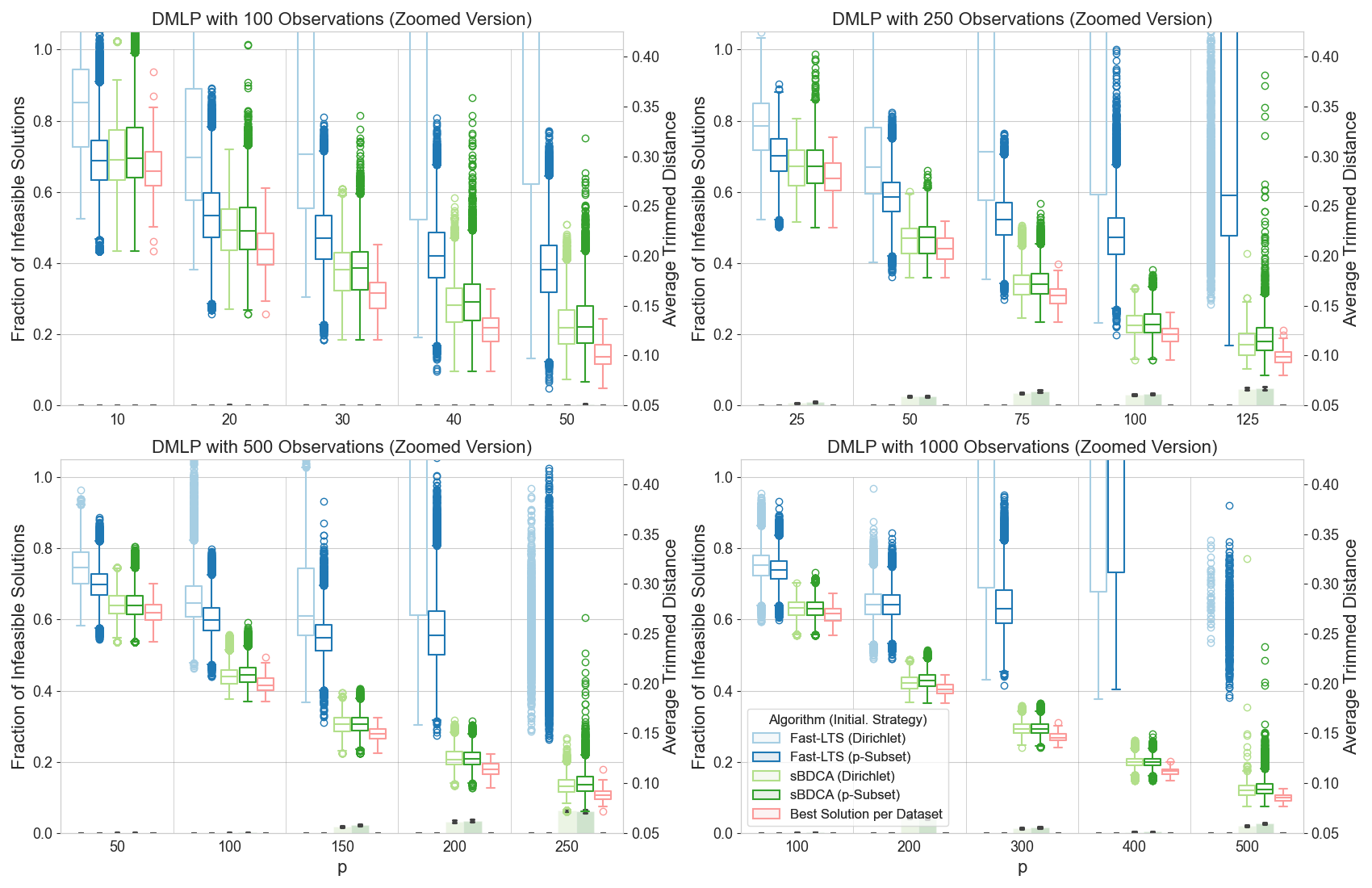}
    \caption{Performance comparison between \gls{sBDCA} with preconditioning and Fast-\gls{LTS} using the \gls{DMLP} and focusing on algorithmic reliability and output quality. 
    Each subplot displays the fraction of infeasible solutions (left y-axis, bar plots) and the \gls{ATD} of all feasible outcomes (right y-axis, box plots) across different numbers of input variables on the x-axis. 
    Results are grouped by solver type and initialization strategy (Dirichlet, $p$-subset).
    The box plots in red display the lowest \gls{ATD} among all feasible solutions per dataset, independent of the solver.}%
    \label{fig:atd_dmlp}%
\end{figure}

\vspace{-0.4cm}


To further investigate the initialization sensitivity of Fast-\gls{LTS}, Figure~\ref{fig:heat_map} presents a heatmap of the objective landscape, overlaid with the optimization paths of both algorithms for 20 randomly selected $p$-subset starting points.
In this example, the underlying dataset was generated using the \gls{DMLP} with $n \equiv 1000$, $p \equiv 3$, and $n_c \equiv n \cdot 0.85$. 
In the left-hand area of the objective landscape, multiple starting points are located close to a suboptimal local solution. 
The right-hand subplot demonstrates how \gls{sBDCA} with preconditioning effectively escapes this valley, while the left-hand subplot reveals how the core procedure of Fast-\gls{LTS} repeatedly becomes trapped. 
Even though, in this example, the heuristic still yields a robust end result due to a sufficient number of starting points located in less vulnerable areas, the trajectories illustrate the non-robustness of its core procedure. 
In settings with a high number of independent variables, many instances must be selected to create a $p$-subset.
Thus, the probability of obtaining at least one clean subset without outliers approaches zero, as discussed in Subsection \ref{sec:initialization_and_preconditioner}. 
As a result, an increasing number of starting points are located closer to suboptimal local solutions, from which the heuristic cannot escape.
This then causes non-robust end results, as shown in Figure~\ref{fig:atd_dmlp}.


\begin{figure}[H]%
    \centering
    \captionsetup{justification=centering}
    \includegraphics[width=1\textwidth]{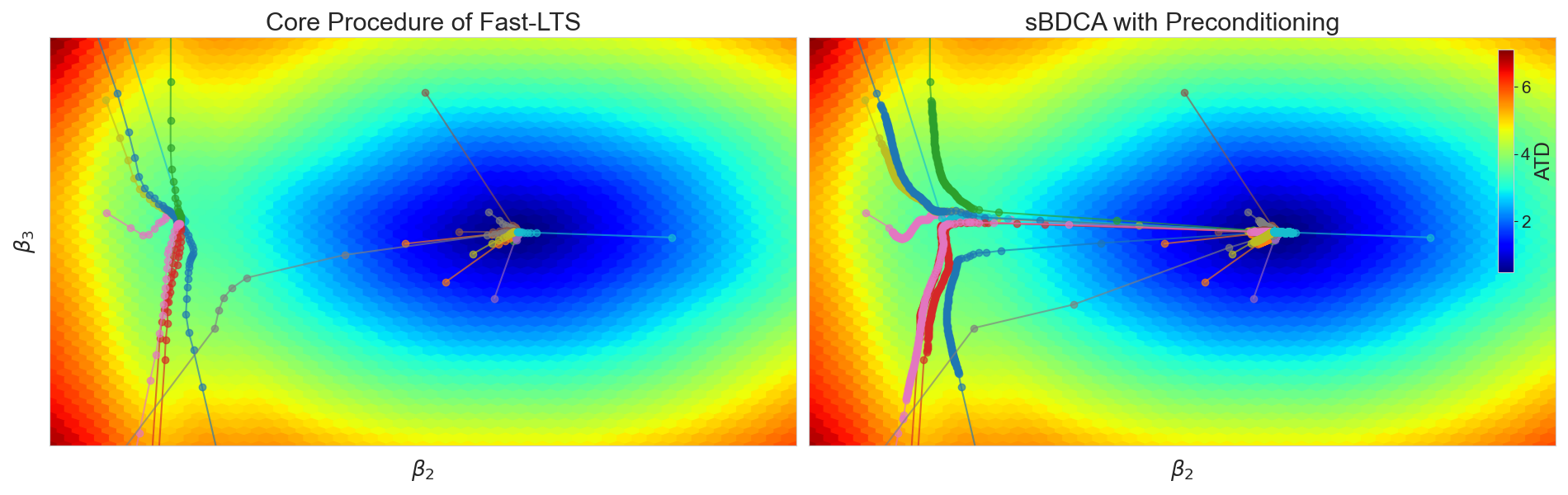}
    \caption{Optimization Paths of Fast-\gls{LTS} and \gls{sBDCA} with Preconditioning.}%
    \label{fig:heat_map}%
\end{figure}

\vspace{-0.4cm}


To provide the final part of the performance comparison using the \gls{DMLP}, Figure~\ref{fig:cpu_oscm} compares the median iteration / c-step count  (left y-axis, bar plots) and the computation time of all feasible outcomes (right y-axis, box plots) across the different settings. 
Overall, the results indicate similar performance trends as observed for the \gls{OSCM} in Figure~\ref{fig:cpu_oscm}. 
However, to ensure robust end results, more conservative input parameters had to be chosen for the \gls{sBDCA} with preconditioning, which cause higher computation times across the selected settings. 
Nevertheless, our proposed algorithm still outperforms Fast-\gls{LTS} in terms of runtime in nearly all instances.


\begin{figure}[H]%
    \centering
    \includegraphics[width=1\textwidth]{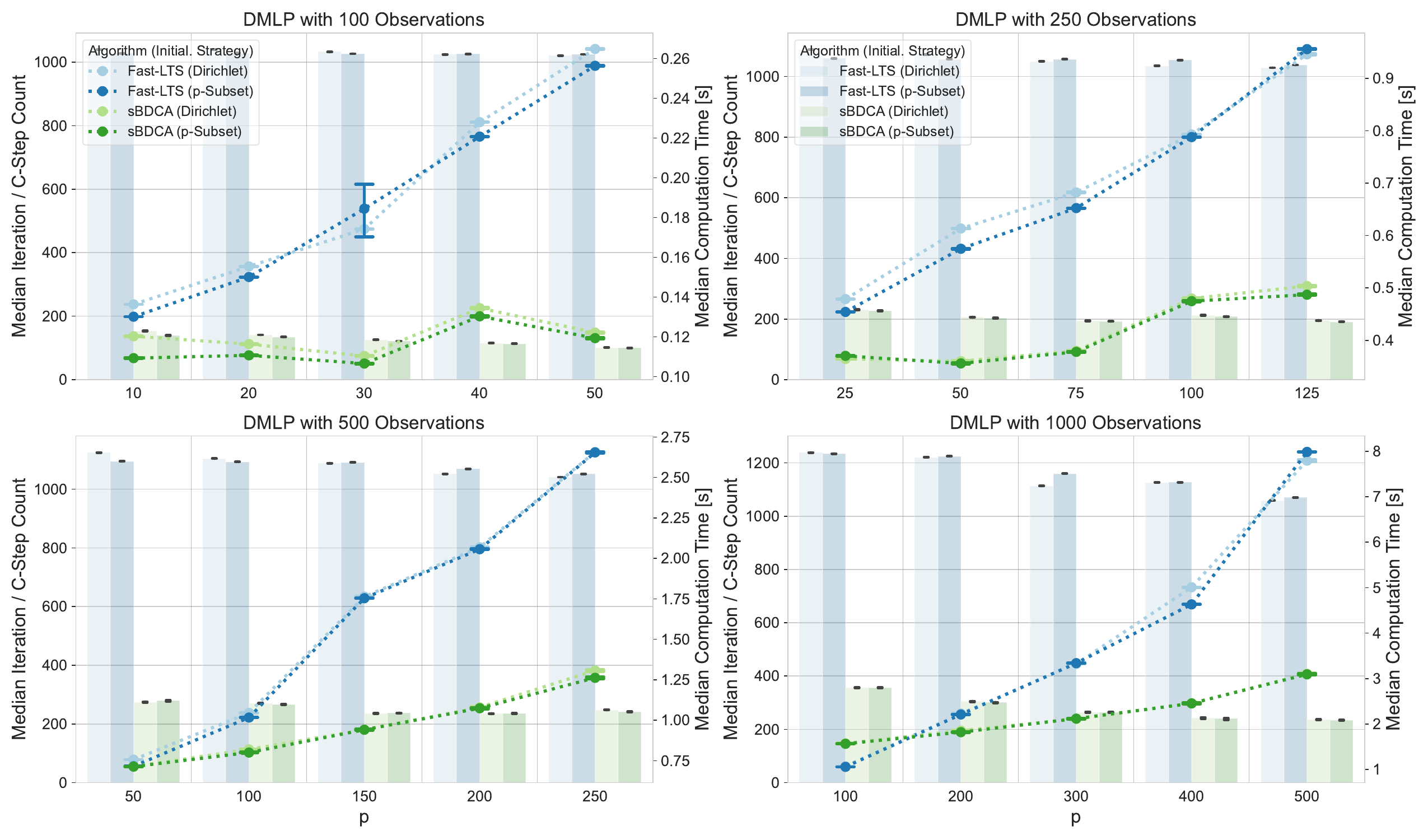}
    \caption{Performance comparison between \gls{sBDCA} with preconditioning and Fast-\gls{LTS} using the \gls{DMLP} and focusing on algorithmic efficiency. 
    Each subplot displays the median number of iterations / c-steps (left y-axis, bar plots) and the computation time of all feasible solutions (right y-axis, box plots) across different numbers of input variables on the x-axis. 
    Results are grouped by solver type and initialization strategy (Dirichlet, $p$-subset).}%
    \label{fig:cpu_dmlp}%
\end{figure}

\vspace{-0.6cm}



\subsection{Real-World Data Experiments}


Building on the synthetic-data comparison, this subsection evaluates algorithmic performance using two high-dimensional real-world datasets from the \href{https://archive.ics.uci.edu/}{UCI Machine Learning Repository}. 
Specifically, we selected the \gls{MSD} and the \gls{SCD}, originally introduced by \citet{bertin2011} and \citet{HAMIDIEH2018}, respectively.
The \gls{MSD} consists of metadata and audio features for one million predominantly Western songs released between 1922 and 2011. 
The dataset supports various tasks, but our focus is on predicting the year of a song's release across more than 450,000 training instances. 
Each song is represented by 90 features, including key acoustic attributes such as pitch, timbre, and loudness. 
The \gls{SCD} comprises data on over 21,000 superconductors. 
The main predictive task is to estimate the log-transformed superconducting critical temperature using 81 features derived from the materials’ chemical compositions. 
These features encompass descriptors related to valence, electron affinity, atomic mass, and other physicochemical characteristics. 
For further details on both datasets, we refer the reader to \citet{bertin2011} and \citet{HAMIDIEH2018}.


In line with the experiments in Subsection~\ref{sec:synthetic_data_exp}, we evaluate performance based on algorithmic efficiency, reliability, and output quality.
Given that both datasets have a fixed number of variables, we stratify observations and sample accordingly to analyze performance as a function of $p$.
Following this procedure, we construct 100 stratified subsets for each sample size $n \in \{\lfloor \frac{p}{0.1} \rfloor, \lfloor \frac{p}{0.2} \rfloor, \lfloor \frac{p}{0.3} \rfloor, \lfloor \frac{p}{0.4} \rfloor, \lfloor \frac{p}{0.5} \rfloor\}$, resulting in a total of 1,000 subsamples $(= 5 \cdot 100 \cdot 2)$. 
Both algorithms are then initialized 500 times per setting using both initialization strategies.


Building on this evaluation framework, Figure~\ref{fig:MSD_SCD_ATD_Inf} compares the fraction of infeasible solutions (left y-axis, bar plots) and the \gls{ATD} of all feasible outcomes (right y-axis, box plots) across the selected settings. 
Overall, \gls{sBDCA} with preconditioning and Dirichlet initialization provides the best performance in terms of algorithmic reliability and output quality.
However, in contrast to the synthetic data experiments, the performance of our proposed \gls{DC} programming algorithm with the $p$-subset initialization is more dataset-dependent. 
In particular, for settings with fewer observations, the \gls{ATD} exhibits greater variability and predominantly fails to significantly outperform the results of Fast-\gls{LTS}.
This observation may be attributed to a more complicated data structure characterized by a non-normal dependent variable and correlated input variables.
The latter then makes it more challenging to mitigate the effects of an unfavorable $p$-subset initialization, even when the proposed preconditioner based on the hat values is applied.


\begin{figure}[H]%
    \centering
    \includegraphics[width=1\textwidth]{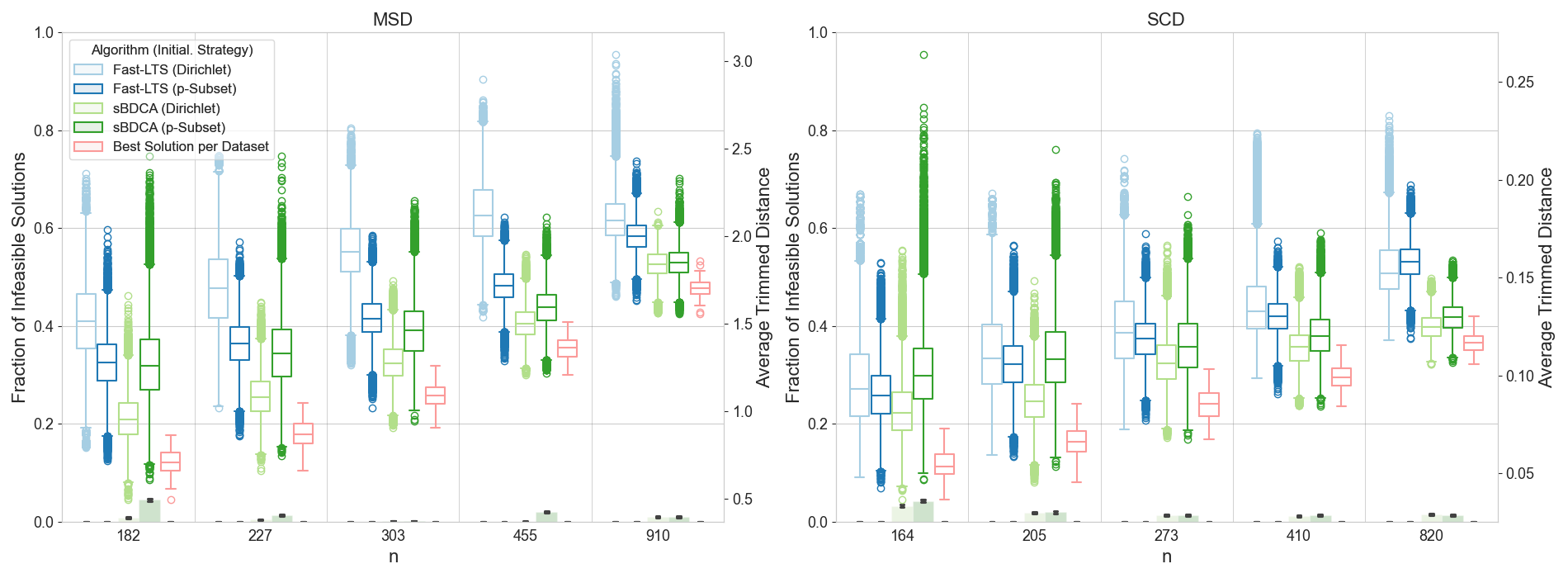}
    \caption{Performance comparison between \gls{sBDCA} with preconditioning and Fast-\gls{LTS} using the \gls{MSD} / \gls{SCD} and focusing on algorithmic reliability and output quality. 
    Each subplot displays the fraction of infeasible solutions (left y-axis, bar plots) and the \gls{ATD} of all feasible outcomes (right y-axis, box plots) across different numbers of input variables on the x-axis. 
    Results are grouped by solver type and initialization strategy (Dirichlet, $p$-subset).
    The box plots in red display the lowest \gls{ATD} among all feasible solutions per dataset independent of the solver.}%
    \label{fig:MSD_SCD_ATD_Inf}%
\end{figure}

\vspace{-0.3cm}


To complete the comparative analysis,
Figure~\ref{fig:MSD_SCD_CT_Iter}   compares the median iteration / c-step count  (left y-axis, bar plots) and the computation time of all feasible outcomes (right y-axis, box plots) across the different settings.
In comparison to the experiments in Subsection~\ref{sec:synthetic_data_exp}, the runtime of \gls{sBDCA} with preconditioning changes depending on the initialization strategy.
This can be explained by the input parameters, which had to be chosen more conservatively for the $p$-subset strategy to ensure an acceptable output quality.
Nevertheless, the overall conclusion remains the same, as our proposed algorithm outperforms Fast-\gls{LTS} in terms of computation time. 


\vspace{-0.2cm}

\begin{figure}[H]%
    \centering
    \includegraphics[width=1\textwidth]{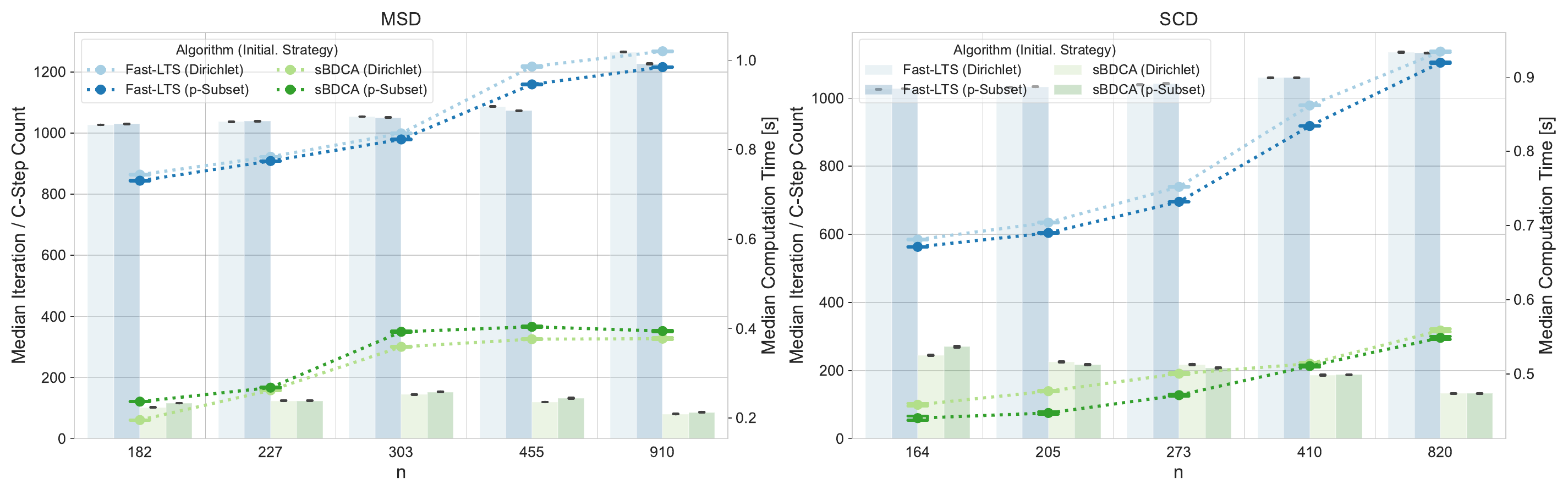}
    \caption{Performance comparison between \gls{sBDCA} with preconditioning and Fast-\gls{LTS} using the \gls{MSD} / \gls{SCD} and focusing on algorithmic efficiency. 
    Each subplot displays the median number of iterations / c-steps (left y-axis, bar plots) and the computation time of all feasible solutions (right y-axis, box plots) across different numbers of input variables on the x-axis. 
    Results are grouped by solver type and initialization strategy (Dirichlet, $p$-subset).}%
    \label{fig:MSD_SCD_CT_Iter}%
\end{figure}

\vspace{-0.5cm}


\section{Conclusions}\label{sec:conclusions}



In this paper, we proposed a \gls{DC} programming algorithm to solve the classical \gls{LTS} problem  more effectively (from a single starting point) than existing approaches. 
For this purpose, we first recalled how the problem can be reformulated as a concave minimization defined over a polytope. 
Based on this representation, we then derived a novel \gls{DC} decomposition for the problem.
This, in turn, enabled us to construct strongly convex approximations of the \gls{LTS} problem, which can be efficiently solved via inexpensive projections onto the feasible set.
Afterwards, we proposed the \gls{sBDCA} as a solution method that combines successive \gls{DC} decompositions with a simplified line search procedure.
Specifically for the \gls{LTS} problem, we proved that the algorithm converges to a local solution with a linear rate in the fastest case, and with a sublinear rate in the slowest case.
To obtain this result, we deduced, based on the Cayley-Hamilton Theorem, that the objective function is a multivariate polynomial and real-analytic function [cf. Definition~\ref{defi:real_analytic}], which fulfills the \L ojasiewic property with exponent $\vartheta_f \in [\frac{1}{2}, 1)$. 
To ensure robust results based on a single starting point, we further derived a problem-specific preconditioning matrix that utilizes the hat values of instances to correct the gradient direction of the objective function.
Ultimately, this preconditioner was incorporated into the projection step of the \gls{sBDCA} framework to better balance the influence of regression outliers and leverage points on the optimization result.


In the practical part of this paper, we conducted extensive numerical experiments  to compare the performance of our proposed \gls{DC} programming algorithm with the Fast-\gls{LTS} heuristic based on several synthetic and real-world datasets.
In these comparisons, we demonstrated that, especially in settings with many independent variables, \gls{sBDCA} with preconditioning is up to 3.25 times faster than the state-of-the-art, and finds up to 90\% lower objective function values. 
We also analyzed the \gls{ATD} depending on different initialization strategies, which showed that our proposed \gls{DC} programming algorithm is more robust than the core procedure of Fast-\gls{LTS}. 
As a result, the \gls{sBDCA} with preconditioning needed only one starting point to match or surpass the solution quality of the heuristic that depends on 500 starting points.


Ultimately, several directions for future research emerge from this work.
From a theoretical perspective, alternative preconditioning matrices could be derived and analyzed to further enhance the algorithm’s effectiveness.
Moreover, other \gls{DC} decompositions could be constructed and investigated for the \gls{LTS} problem.
From a practical standpoint, future work could examine how to automatically tune the input parameters of \gls{sBDCA} to improve user-friendliness. 
In this context, exploring alternative initialization strategies that remain reliable in high-dimensional settings would also be valuable.
Furthermore, future numerical comparisons could benefit from an enriched collection of synthetic and real-world regression datasets with outliers to enable a more systematic benchmarking of robust regression estimators.


\section*{Acknowledgments}

The authors acknowledge the use of the IRIDIS High Performance Computing Facility, and associated support services at the University of Southampton, in the completion of this work.

Further, the authors would like to thank Selin Damla Ahipa{\c{s}}ao{\u{g}}lu, Stefano Cipolla, Tim Marshall-Cox and Samuel Ward for helpful remarks and insightful discussions
throughout the development of this work.


\addcontentsline{toc}{section}{References}

\setcitestyle{numbers}
\renewcommand{\bibnumfmt}[1]{#1.}

\bibliographystyle{dcu}
\setlength{\bibsep}{5.5pt}
\bibliography{02_Chapters/08_References.bib}

@book{aggarwal2015,
      title={{Data Mining: The Textbook}},
      author={Aggarwal, Charu C},
      edition={1st},
      year={2015},
      publisher={Springer},
      address={Cham},
      note={\url{https://doi.org/10.1007/978-3-319-14142-8}}
}

@article{AGULLO2001,
         title = {New algorithms for computing the least trimmed squares regression estimator},
         journal = {Computational Statistics \& Data Analysis},
         volume = {36},
         number = {4},
         pages = {425--439},
         year = {2001},
         note = {\url{https://doi.org/10.1016/S0167-9473(00)00056-6}},
         author = {José Agulló}
}

@article{ahipacsaouglu2015,
         title={Fast algorithms for the minimum volume estimator},
         author={Ahipa{\c{s}}ao{\u{g}}lu, Selin Damla},
         journal={Journal of Global Optimization},
         volume={62},
         pages={351--370},
         year={2015},
         publisher={Springer},
         note={\url{https://doi.org/10.1007/s10898-014-0233-8}}
}

@article{AlNoor2013,
         title={{Model of Robust Regression with Parametric and Nonparametric Methods}},
         author={Al-Noor, Nadia H. and Mohammad, Asmaa A},
         journal={Mathematical Theory and Modeling},
         volume={3},
         number={5},
         pages={27--39},
         year={2013}
}

@article{Alfons2013,
         note = {\url{https://doi.org/10.1214/12-AOAS575}},
         author = {Andreas Alfons and Christophe Croux and Sarah Gelper},
         journal = {The Annals of Applied Statistics},
         number = {1},
         pages = {226--248},
         publisher = {Institute of Mathematical Statistics},
         title = {SPARSE LEAST TRIMMED SQUARES REGRESSION FOR ANALYZING HIGH-DIMENSIONAL LARGE DATA SETS},
         volume = {7},
         year = {2013}
}

@article{alma2011,
         title={Comparison of robust regression methods in linear regression},
         author={Alma, {\"O}zlem G{\"u}r{\"u}nl{\"u}},
         journal={International Journal of Contemporary Mathematical Sciences},
         volume={6},
         number={9},
         pages={409--421},
         year={2011}
}

@misc{ang2021,
      title={{Fast Projection onto the Capped Simplex with Applications to Sparse Regression in Bioinformatics, arXiv preprint, arXiv:2110.08471 [math.OC]}}, 
      author={Ang, Andersen and Ma, Jianzhu and Liu, Nianjun and Huang, Kun and Wang, Yijie},
      year={2021},
      note={\url{https://doi.org/10.48550/arXiv.2110.08471}}, 
}

@book{anjos2011,
      title={{Handbook on Semidefinite, Conic and Polynomial Optimization}},
      author={Anjos, Miguel F and Lasserre, Jean B},
      edition={1st},
      series={International Series in Operations Research \& Management Science},
      year={2011},
      publisher={Springer},
      address={New York},
      note={\url{https://doi.org/10.1007/978-1-4614-0769-0}}
}

@book{Antoniou2021,
      title={{Practical Optimization: Algorithms and Engineering Applications}},
      author={Antoniou, Andreas and Lu, Wu-Sheng},
      edition={2nd},
      year={2021},
      series={Texts in Computer Science},
      publisher={Springer},
      address={New York},
      note={\url{https://doi.org/10.1007/978-1-0716-0843-2}}
}

@article{aragon2018,
         title={{Accelerating the DC algorithm for smooth functions}},
         author={Arag{\'o}n-Artacho, Francisco J and Fleming, Ronan MT and Vuong, Phan T},
         journal={Mathematical Programming},
         volume={169},
         pages={95--118},
         year={2018},
         publisher={Springer},
         note = {\url{https://doi.org/10.1007/s10107-017-1180-1}}
}

@article{aragon2020,
         author = {Arag\'{o}n-Artacho, Francisco J. and Vuong, Phan T.},
         title = {{The Boosted Difference of Convex Functions Algorithm For Nonsmooth Functions}},
         journal = {SIAM Journal on Optimization},
         volume = {30},
         number = {1},
         pages = {980--1006},
         year = {2020},
         note = { \url{https://doi.org/10.1137/18M123339X}}
}

@article{aragon2022,
         title={{The Boosted DC Algorithm for Linearly Constrained DC Programming}},
         author={Arag{\'o}n-Artacho, Francisco Javier and Campoy, Rub{\'e}n and Vuong, Phan T},
         journal={Set-Valued and Variational Analysis},
         pages={1265--1289},
         volume={30},
         year={2022},
         note={\url{https://doi.org/10.1007/s11228-022-00656-x}}
}

@article{Armstrong2014,
         author = {Armstrong, Richard A.},
         title = {{When to use the Bonferroni correction}},
         journal = {Ophthalmic and Physiological Optics},
         volume = {34},
         number = {5},
         pages = {502--508},
         note = {\url{https://doi.org/10.1111/opo.12131}},
         year = {2014}
}

@article{atkinson1999,
         title={Computing least trimmed squares regression with the forward search},
         author={Atkinson, Anthony C and Cheng, T.-C.},
         journal={Statistics and Computing},
         volume={9},
         pages={251--263},
         year={1999},
         note={\url{https://doi.org/10.1023/A:1008942604045}}
}

@article{Baesens2009,
        author = {Baesens, B and Mues, C and Martens, D and Vanthienen, J},
        title = {{50 years of data mining and OR: upcoming trends and challenges}},
        journal = {Journal of the Operational Research Society},
        volume = {60},
        number = {sup1},
        pages = {S16--S23},
        year = {2009},
        note = {\url{https://doi.org/10.1057/jors.2008.171}},
}

@article{BARBATO2024,
         title = {Mathematical programming for simultaneous feature selection and outlier detection under l1 norm},
         journal = {European Journal of Operational Research},
         volume = {316},
         number = {3},
         pages = {1070--1084},
         year = {2024},
         note= {\url{https://doi.org/10.1016/j.ejor.2024.03.035}},
         author = {Barbato, Michele and Ceselli, Alberto},
}

@article{Becher1993,
         note = {\url{https://doi.org/10.2307/2532271}},
         author = {Heiko Becher and Peter Hall and Susan R. Wilson},
         journal = {Biometrics},
         number = {4},
         pages = {1268--1272},
         title = {Bootstrap Hypothesis Testing Procedures},
         volume = {49},
         year = {1993}
}

@article{beiranvand2017,
         title={Best practices for comparing optimization algorithms},
         author={Beiranvand, Vahid and Hare, Warren and Lucet, Yves},
         journal={Optimization and Engineering},
         volume={18},
         pages={815--848},
         year={2017},
         note={\url{https://doi.org/10.1007/s11081-017-9366-1}}
}

@article{beliakov2012,
         title={Computing of high breakdown regression estimators without sorting on graphics processing units},
         author={Beliakov, Gleb and Johnstone, Michael and Nahavandi, Saeid},
         journal={Computing},
         volume={94},
         pages={433--447},
         year={2012},
         note={\url{https://doi.org/10.1007/s00607-011-0183-7}}
}

@misc{bernholt2006,
      title={{Robust Estimators are Hard to Compute, Technical Report, No. 2005/52,  Universität Dortmund, Sonderforschungsbereich 475 - Komplexitätsreduktion in Multivariaten Datenstrukturen, Dortmund}},
      author={Bernholt, Thorsten},
      year={2006},
      note={\url{https://www.econstor.eu/bitstream/10419/22645/1/tr52-05.pdf}}
}

@misc{bertin2011,
      title={{The Million Song Dataset}},
      author={Bertin-Mahieux, Thierry and Ellis, Daniel PW and Whitman, Brian and Lamere, Paul},
      year={2011},
      note={UCI Machine Learning Repository. Available at \url{https://archive.ics.uci.edu/dataset/203/yearpredictionmsd}}
}

@book{bertsekas1999,
      title={{Nonlinear Programming}},
      author={Bertsekas, Dimitri P},
      edition={2nd},
      year={1999},
      publisher={Athena Scientific},
      address={Belmont (Massachusetts)},
      note={ISBN 1-886529-00-0}
}

@article{Bertsimas2016,
         author = {Bertsimas, Dimitris and King, Angela},
         title = {{OR Forum—An Algorithmic Approach to Linear Regression}},
         journal = {Operations Research},
         volume = {64},
         number = {1},
         pages = {2--16},
         year = {2016},
         note = {\url{https://doi.org/10.1287/opre.2015.1436}}
}

@article{bonferroni1936,
         title={Teoria statistica delle classi e calcolo delle probabilita},
         author={Bonferroni, Carlo},
         journal={Pubblicazioni del R istituto superiore di scienze economiche e commericiali di firenze},
         volume={8},
         pages={3--62},
         year={1936}
}

@book{boyd2009,
      title={{Convex Optimization}},
      author={Boyd, Stephen and Vandenberghe, Lieven},
      year={2009},
      publisher={Cambridge University Press},
      address={Cambridge},
      note={\url{https://doi.org/10.1017/CBO9780511804441}}
}

@book{Brualdi1991, 
      address={Cambridge}, 
      series={Encyclopedia of Mathematics and its Applications}, 
      title={Combinatorial Matrix Theory}, 
      publisher={Cambridge University Press}, 
      author={Brualdi, Richard A. and Ryser, Herbert J.}, 
      year={1991}, 
      edition={1st},
      collection={Encyclopedia of Mathematics and its Applications},
      note={\url{https://doi.org/10.1017/CBO9781107325708}}
}

@ARTICLE{Celik1992,
         author={{\c{C}}elik, M.K. and Abur, A.},
         journal={IEEE Transactions on Power Systems}, 
         title={{A robust WLAV state estimator using transformations}}, 
         year={1992},
         volume={7},
         number={1},
         pages={106--113},
         note={\url{https://doi.org/10.1109/59.141693}}
}

@book{Chatterjee2020,
      title={{Handbook of Regression Analysis With Applications in R}},
      author={Chatterjee, Samprit and Simonoff, Jeffrey S.},
      year={2020},
      edition={2nd},
      publisher={John Wiley \& Sons},
      address={Hoboken (NJ)},
      series={Wiley Series in Probability and Statistics},
      note = {\url{https://doi.org/10.1002/9781119392491}}
}

@article{Chave2003,
         author = {Chave, Alan D. and Thomson, David J.},
         title = {{A Bounded Influence Regression Estimator Based on the Statistics of the Hat Matrix}},
         journal = {Journal of the Royal Statistical Society Series C: Applied Statistics},
         volume = {52},
         number = {3},
         pages = {307--322},
         year = {2003},
         doi = {10.1111/1467-9876.00406},
         note = {\url{https://doi.org/10.1111/1467-9876.00406}}
}

@article{Chen2018,
         author  = {Ruidi Chen and Ioannis Ch. Paschalidis},
         title   = {{A Robust Learning Approach for Regression Models Based on Distributionally Robust Optimization}},
         journal = {Journal of Machine Learning Research},
         year    = {2018},
         volume  = {19},
         number  = {13},
         pages   = {1--48},
         note     = {\url{http://jmlr.org/papers/v19/17-295.html}}
}

@article{cipolla2024,
         title={Proximal-stabilized semidefinite programming},
         author={Cipolla, Stefano and Gondzio, Jacek},
         journal={Computational Optimization and Applications,},
         pages={1--44},
         year={2024},
         volume={},
         note={\url{https://doi.org/10.1007/s10589-024-00614-3}}
}

@article{Critchley2010,
         title={A relaxed approach to combinatorial problems in robustness and diagnostics},
         author={Critchley, F. and Schyns, M. and Haesbroeck, G. and Fauconnier, C. and Lu, G. and Atkinson, R. A. and Wang, D. Q.},
         journal={Statistics and Computing},
         volume={20},
         pages={99--115},
         year={2010},
         note={\url{https://doi.org/10.1007/s11222-009-9119-x}}
}

@article{DEMAESSCHALCK20001,
         title = {{The Mahalanobis distance}},
         journal = {Chemometrics and Intelligent Laboratory Systems},
         volume = {50},
         number = {1},
         pages = {1--18},
         year = {2000},
         note = {\url{https://doi.org/10.1016/S0169-7439(99)00047-7}},
         author = {R. {De Maesschalck} and D. Jouan-Rimbaud and D. L. Massart}
}

@article{deOliveira2020,
         title={{The ABC of DC programming}},
         author={de Oliveira, Welington},
         journal={Set-Valued and Variational Analysis},
         volume={28},
         pages={679--706},
         year={2020},
         note={\url{https://doi.org/10.1007/s11228-020-00566-w}}
}

@article{Dogru2018,
         author = {Doğru, Fatma Zehra and Arslan, Olcay},
         title = {{Robust mixture regression modeling using the least trimmed squares (LTS)-estimation method}},
         journal = {Communications in Statistics - Simulation and Computation},
         volume = {47},
         number = {7},
         pages = {2184--2196},
         year = {2018},
         note = {\url{https://doi.org/10.1080/03610918.2017.1341528}}
}

@article{efron1979,
         title={{Bootstrap Methods: Another look at the Jackknife}},
         author={Efron, Bradley},
         journal={The Annals of Statistics},
         pages={1--26},
         year={1979},
         volume={7},
         number={1},
         note={\url{https://www.jstor.org/stable/2958830}}
}

@book{Fahrmeir2021,
      title={{Regression: Models, Methods and Applications}},
      author={Fahrmeir, Ludwig and Kneib, Thomas and Lang, Stefan and Marx, Brian D.},
      edition={2nd},
      year={2021},
      publisher={Springer},
      address={Berlin},
      note={\url{https://doi.org/10.1007/978-3-662-63882-8}}
}

@article{fernandes2023,
         author = {Fernandes, Alvaro A. A. and Koehler, Martin and Konstantinou, Nikolaos and Pankin, Pavel and Paton, Norman P.},
         title = {Data Preparation: A Technological Perspective and Review},
         journal = {SN Computer Science},
         volume = {4},
         number = {425},
         pages = {1--20},
         year  = {2023},
         note = {\url{https://doi.org/10.1007/s42979-023-01828-8}}
}

@article{FLORES2015,
         title = {{SOCP relaxation bounds for the optimal subset selection problem applied to robust linear regression}},
         journal = {European Journal of Operational Research},
         volume = {246},
         number = {1},
         pages = {44--50},
         year = {2015},
         note = {\url{https://doi.org/10.1016/j.ejor.2015.04.024}},
         author = {Flores, Salvador},
}

@book{fox2015,
      title={{Applied Regression Analysis and Generalized Linear Models}},
      author={Fox, John},
      year={2015},
      edition= {3rd},
      publisher={SAGE Publications},
      address={Thousand Oaks (California)}
}

@book{fox2019,
      title={{An R companion to applied regression}},
      author={Fox, John and Weisberg, Sanford},
      year={2019},
      edition= {3rd},
      publisher={SAGE Publications},
      address={Thousand Oaks (California)}
}

@article{Gafni1984,
         author = {Gafni, Eli M. and Bertsekas, Dimitri P.},
         title = {{Two-Metric Projection Methods for Constrained Optimization}},
         journal = {SIAM Journal on Control and Optimization},
         volume = {22},
         number = {6},
         pages = {936--964},
         year = {1984},
         note = {\url{https://doi.org/10.1137/0322061}}
}

@book{garcia2015,
      title={{Data Preprocessing in Data Mining}},
      author={Garc{\'\i}a, Salvador and Luengo, Juli{\'a}n and Herrera, Francisco},
      edition={1st},
      year={2015},
      publisher={Springer},
      address={Cham},
      series={Intelligent Systems Reference Library},
      note={\url{https://doi.org/10.1007/978-3-319-10247-4}}
}

@article{GILONI2002,
         title = {{Least Trimmed Squares Regression, Least Median Squares Regression, and Mathematical Programming}},
         journal = {Mathematical and Computer Modelling},
         volume = {35},
         number = {9--10},
         pages = {1043--1060},
         year = {2002},
         note = {\url{https://doi.org/10.1016/S0895-7177(02)00069-9}},
         author = {A. Giloni and M. Padberg}
}

@book{Giorgi2023, 
      address={Cham}, 
      series={International Series in Operations Research \& Management Science}, 
      title={Basic Mathematical Programming Theory}, 
      publisher={Springer}, 
      author={Giorgi, Giorgio and Jimen{\'e}z, Bienvenido and Novo, Vicente }, 
      year={2023}, 
      edition ={1st},
      note={\url{https://doi.org/10.1007/978-3-031-30324-1}}
}

@article{Habshah2009,
         author = {Habshah, M. and  Norazan,  M. R. and  Rahmatullah Imon, A. H.M.},
         title = {The performance of diagnostic-robust generalized potentials for the identification of multiple high leverage points in linear regression},
         journal = {Journal of Applied Statistics},
         volume = {36},
         number = {5},
         pages = {507--520},
         year = {2009},
         note = {\url{https://doi.org/10.1080/02664760802553463}}
}

@article{Hadi1993,
         author = {Ali S. Hadi and Jeffrey S. Simonoff},
         title = {{Procedures for the Identification of Multiple Outliers in Linear Models}},
         journal = {Journal of the American Statistical Association},
         volume = {88},
         number = {424},
         pages = {1264--1272},
         year = {1993},
         note = {\url{https://doi.org/10.1080/01621459.1993.10476407}}
}

@article{HAMIDIEH2018,
         title = {A data-driven statistical model for predicting the critical temperature of a superconductor},
         journal = {Computational Materials Science},
         volume = {154},
         pages = {346--354},
         year = {2018},
         note = {\url{https://doi.org/10.1016/j.commatsci.2018.07.052}},
         author = {Hamidieh, Kam},
}

@article{hampel1973,
         title={{Robust estimation: A condensed partial survey}},
         author={Hampel, Frank R},
         year={1973},
         journal={Zeitschrift für Wahrscheinlichkeitstheorie und Verwandte Gebiete},
         volume={27},
         pages={87--104},
         note = {\url{https://doi.org/10.1007/BF00536619}}
}

@book{hampel1986,
      title={{Robust Statistics: The Approach Based on Influence Functions}},
      author={Hampel, Frank R and Ronchetti, Elvezio M and Rousseeuw, Peter J and Stahel, Werner A},
      year={1986},
      edition={1st},
      series={Wiley Series in Probability and Statistics},
      publisher={John Wiley \& Sons},
      address={New York},
      note = {\url{https://doi.org/10.1002/9781118186435}}
}

@article{HARRINGTON2010,
         title = {{Finding approximate solutions to combinatorial problems with very large data sets using BIRCH}},
         journal = {Computational Statistics \& Data Analysis},
         volume = {54},
         number = {3},
         pages = {655--667},
         year = {2010},
         note = {\url{https://doi.org/10.1016/j.csda.2008.08.001}},
         author = { Harrington, Justin and Salibián-Barrera, Matias},
}

@article{hartman1959,
         title={On functions representable as a difference of convex functions},
         author={Hartman, Philip},
         year={1959},
         journal={Pacific Journal of Mathematics},
         volume={9},
         number={3},
         pages={707--713},
         note = {\url{https://doi.org/10.2140/pjm.1959.9.707}}
}

@book{hastie2009,
      title={{The Elements of Statistical Learning: Data Mining, Inference, and Prediction}},
      author={Hastie, Trevor and Tibshirani, Robert and Friedman, Jerome H},
      edition={2nd},
      year={2009},
      note={\url{https://doi.org/10.1007/b94608}},
      address ={New York},
      publisher={Springer}
}

@book{hawkins1980,
      title={{Identification of Outliers}},
      author={Hawkins, Douglas M},
      edition={1st},
      year={1980},
      series={Monographs on Statistics and Applied Probability},
      publisher={Springer},
      note={\url{https://doi.org/10.1007/978-94-015-3994-4}},
      address={Dordrecht}
}

@article{hawkins1984,
         title={{Location of Several Outliers in Multiple-Regression Data Using Elemental Sets}},
         author={Hawkins, Douglas M and Bradu, Dan and Kass, Gordon V},
         journal={Technometrics},
         volume={26},
         number={3},
         pages={197--208},
         year={1984},
         note={\url{https://doi.org/10.1080/00401706.1984.10487956}}
}

@article{HAWKINS1994,
         title = {The feasible solution algorithm for least trimmed squares regression},
         journal = {Computational Statistics \& Data Analysis},
         volume = {17},
         number = {2},
         pages = {185--196},
         year = {1994},
         note = {\url{https://doi.org/10.1016/0167-9473(92)00070-8}},
         author = {Douglas M. Hawkins}
}

@article{HAWKINS1999,
         title = {Improved feasible solution algorithms for high breakdown estimation},
         journal = {Computational Statistics \& Data Analysis},
         volume = {30},
         number = {1},
         pages = {1--11},
         year = {1999},
         note = {\url{https://doi.org/10.1016/S0167-9473(98)00082-6}},
         author = {Douglas M. Hawkins and David J. Olive}
}

@article{Hawkins2002,
         author = {Hawkins, Douglas M and Olive, David J },
         title = {{Inconsistency of Resampling Algorithms for High-Breakdown Regression Estimators and a New Algorithm}},
         journal = {Journal of the American Statistical Association},
         volume = {97},
         number = {457},
         pages = {136--159},
         year = {2002},
         note = {\url{https://doi.org/10.1198/016214502753479293}}
}

@article{Heng2025,
         title={Bootstrap estimation of the proportion of outliers in robust regression},
         author={Heng, Q. and Lange, K.},
         journal={Statistics and Computing},
         volume={35},
         number ={3},
         pages={1--14},
         year={2025},
         note={\url{https://doi.org/10.1007/s11222-024-10526-1}}
}

@InProceedings{Ho2020,
               author="Ho, Vinh Thanh and Le Thi, Hoai An and Pham Dinh, Tao",
               editor="Le Thi, Hoai An and Le, Hoai Minh and Pham Dinh, Tao and Nguyen, Ngoc Thanh",
               title={{DCA with Successive DC Decomposition for Convex Piecewise-Linear Fitting}},
               booktitle="Advanced Computational Methods for Knowledge Engineering",
               year="2020",
               publisher="Springer International Publishing",
               address="Cham",
               pages="39--51",
               note={\url{https://doi.org/10.1007/978-3-030-38364-0_4}}
}

@article{HO2021,
         title = {{DCA-based algorithms for DC fitting}},
         journal = {Journal of Computational and Applied Mathematics},
         volume = {389},
         pages = {113353},
         year = {2021},
         note = {\url{https://doi.org/10.1016/j.cam.2020.113353}},
         author = {Vinh Thanh Ho and Hoai An {Le Thi} and Tao {Pham Dinh}},
}

@article{Hoaglin1978,
         author = {Hoaglin, David C. and Welsch, Roy E.},
         title = {{The Hat Matrix in Regression and ANOVA}},
         journal = {The American Statistician},
         volume = {32},
         number = {1},
         pages = {17--22},
         year = {1978},
         note = {\url{https://doi.org/10.1080/00031305.1978.10479237}}
}

@book{Hocking2003,
      title={{Methods and Applications of Linear Models: Regression and the Analysis of Variance}},
      author={Hocking, Ronald R},
      edition={2nd},
      year={2003},
      publisher={John Wiley \& Sons},
      address={Hoboken (New Jersey)},
      note={\url{https://doi.org/10.1002/0471434159}}
}

@article{Hofmann2010,
         author = {Hofmann, Marc and  Gatu, Cristian and  Kontoghiorghes, Erricos John},
         title = {{An Exact Least Trimmed Squares Algorithm for a Range of Coverage Values}},
         journal = {Journal of Computational and Graphical Statistics},
         volume = {19},
         number = {1},
         pages = {191--204},
         year = {2010},
         note = {\url{https://doi.org/10.1198/jcgs.2009.07091}}

}

@book{Horst1995,
      title={{Handbook of Global Optimization: Volume 2}},
      author={Horst, Reiner and Pardalos, Panos M.},
      year={1995},
      edition={1st},
      series={Nonconvex Optimization and Its Applications},
      publisher={Springer},
      address={New York},
      note = {\url{https://doi.org/10.1007/978-1-4757-5362-2}}
}

@article{HOSSJER1995,
         title = {Exact computation of the least trimmed squares estimate in simple linear regression},
         journal = {Computational Statistics \& Data Analysis},
         volume = {19},
         number = {3},
         pages = {265--282},
         year = {1995},
         note = {\url{https://doi.org/10.1016/0167-9473(95)92697-V}},
         author = {Ola H{\"o}ssjer}
}

@ARTICLE{Huang2016,
         author={Huang, Dong and Cabral, Ricardo and Torre, Fernando De la},
         journal={IEEE Transactions on Pattern Analysis and Machine Intelligence}, 
         title={Robust Regression}, 
         year={2016},
         volume={38},
         number={2},
         pages={363--375},
         note={\url{https://doi.org/10.1109/TPAMI.2015.2448091}}
}

@book{huber2009,
      title={{Robust Statistics}},
      author={Huber, Peter J. and  Ronchetti, Elvezio M.},
      series={Wiley Series in Probability and Statistics},
      edition={2nd},
      year={2009},
      publisher={John Wiley \& Sons},
      address = {Hoboken},
      note={\url{https://doi.org/10.1002/9780470434697}}
}

@book{ibe2013,
      title={{Markov Processes for Stochastic Modeling}},
      author={Ibe, Oliver C.},
      year={2013},
      publisher={Elsevier},
      edition={2nd},
      address={London},
      note = {\url{https://doi.org/10.1016/C2012-0-06106-6}}
}

@book{James2021,
      title={{An Introduction to Statistical Learning with Applications in R}},
      author={James, Gareth and Witten, Daniela and Hastie, Trevor and Tibshirani, Robert},
      year={2021},
      edition={2nd},
      publisher={Springer},
      address={New York},
      series={Springer Texts in Statistics},
      note = {\url{https://doi.org/10.1007/978-1-0716-1418-1}}
}

@article{Kan2013,
         author = {Kan, Bet{\"u}l and Alpu, {\"O}zlem and  Yazıcı, Berna},
         title = {{Robust ridge and robust Liu estimator for regression based on the LTS estimator}},
         journal = {Journal of Applied Statistics},
         volume = {40},
         number = {3},
         pages = {644--655},
         year = {2013},
         note = {\url{https://doi.org/10.1080/02664763.2012.750285}}
}

@book{Lasserre2015, 
      series={Cambridge Texts in Applied Mathematics}, 
      title={An Introduction to Polynomial and Semi-Algebraic Optimization}, 
      publisher={Cambridge University Press}, 
      address={Cambridge},
      edition={1st},
      author={Lasserre, Jean Bernard}, 
      year={2015}, 
      note={\url{https://doi.org/10.1017/CBO9781107447226}}
}

@book{Lewis2023, 
      series={Lecture Notes in Mathematics}, 
      title={Geometric Analysis on Real Analytic Manifolds}, 
      publisher={Springer}, 
      address={Cham},
      edition={1st},
      author={Lewis, Andrew D.}, 
      year={2023}, 
      note={\url{https://doi.org/10.1007/978-3-031-37913-0}}
}

@article{LeThi2000,
         title={An efficient algorithm for globally minimizing a quadratic function under convex quadratic constraints},
         author={Le Thi, Hoai An},
         journal={Mathematical Programming},
         volume={87},
         pages={401--426},
         year={2000},
         note={\url{https://doi.org/10.1007/s101070050003}}
}

@article{LeThi2005,
         title={{The DC (Difference of Convex Functions) Programming and DCA Revisited with DC Models of Real World Nonconvex Optimization Problems}},
         author={Le Thi, Hoai An and Pham Dinh, Tao},
         journal={Annals of Operations Research},
         volume={133},
         pages={23--46},
         year={2005},
         note={\url{https://doi.org/10.1007/s10479-004-5022-1}}
}

@article{leThi2018,
         title={{DC programming and DCA: thirty years of developments}},
         author={Le Thi, Hoai An and Pham Dinh, Tao},
         journal={Mathematical Programming},
         volume={169},
         pages={5--68},
         year={2018},
         note={\url{https://doi.org/10.1007/s10107-018-1235-y}}
}

@article{leThi2019,
         title={{A unified DC programming framework and efficient DCA based approaches for large scale batch reinforcement learning}},
         author={Le Thi, Hoai An and Ho, Vinh Thanh and Pham Dinh, Tao},
         journal={Journal of Global Optimization},
         volume={73},
         pages={279--310},
         year={2019},
         note={\url{https://doi.org/10.1007/s10898-018-0698-y}}
}

@article{leThi2024,
         title={{Open issues and recent advances in DC programming and DCA}},
         author={Le Thi, Hoai An and Pham Dinh, Tao},
         journal={Journal of Global Optimization},
         volume={88},
         pages={533--590},
         year={2024},
         note={\url{https://doi.org/10.1007/s10898-023-01272-1}}
}

@article{Liu2019,
         title = {{A refined convergence analysis of ${pDCA}_{e}$ with applications to simultaneous sparse recovery and outlier detection}},
         journal = {Computational Optimization and Applications},
         volume = {73},
         pages = {69--100},
         year = {2019},
         note = {\url{https://doi.org/10.1007/s10589-019-00067-z}},
         author = {Liu, Tianxiang and Pong, Ting Kei and Takeda, Akiko}
}

@book{Lojasiewicz1965,
      title={Ensembles Semi-Analytiques}, 
      author={{\L}ojasiewicz, Stanis{\l}aw},
      year={1965},
      publisher ={Institute des Hautes Etudes Scientifiques},
      address={Bures-sur-Yvette (Seine-et-Oise), France}, 
}

@article{Mahalanobis2018,
         author = {P. C. Mahalanobis},
         journal = {Sankhyā: The Indian Journal of Statistics, Series A},
         number = {A},
         pages = {pp. S1--S7},
         title = {ON THE GENERALIZED DISTANCE IN STATISTICS},
         volume = {80},
         year = {2018},
         note={\url{https://doi.org/10.1007/s13171-019-00164-5}}
}

@article{MANGASARIAN1967,
         title = {{The Fritz John necessary optimality conditions in the presence of equality and inequality constraints}},
         journal = {Journal of Mathematical Analysis and Applications},
         volume = {17},
         number = {1},
         pages = {37--47},
         year = {1967},
         note = {\url{https://doi.org/10.1016/0022-247X(67)90163-1}},
         author = {Mangasarian, O. L. and Fromovitz, S.}
}

@book{Maronna2019,
      title={{Robust Statistics: Theory and Methods (with R)}},
      author={Maronna, Ricardo A. and Martin, R. Douglas and Yohai, Victor J. and Salibián-Barrera, Matías },
      year={2019},
      edition={2nd},
      publisher={John Wiley \& Sons},
      address={Hoboken},
      series={Wiley Series in Probability and Statistics},
      note = {\url{https://doi.org/10.1002/9781119214656}}
}

@article{Molybog2020,
         author  = {Igor Molybog and Ramtin Madani and Javad Lavaei},
         title   = {{Conic Optimization for Quadratic Regression Under Sparse Noise}},
         journal = {Journal of Machine Learning Research},
         year    = {2020},
         volume  = {21},
         number  = {195},
         pages   = {1--36},
         note     = {\url{http://jmlr.org/papers/v21/18-881.html}}
}

@book{morris2011,
      title={{Design of Experiments: An Introduction Based on Linear Models}},
      author={Morris, Max D.},
      year={2011},
      edition={1st},
      publisher={Chapman \& Hall/CRC},
      address={New York},
      note = {\url{https://doi.org/10.1201/9781439894903}}
}

@article{mount2014,
         title={{On the Least Trimmed Squares Estimator}},
         author={Mount, David M and Netanyahu, Nathan S and Piatko, Christine D and Silverman, Ruth and Wu, Angela Y},
         journal={Algorithmica},
         volume={69},
         pages={148--183},
         year={2014},
         note={\url{https://doi.org/10.1007/s00453-012-9721-8}}
}

@article{mount2016,
         title = {{A practical approximation algorithm for the LTS estimator}},
         journal = {Computational Statistics \& Data Analysis},
         volume = {99},
         pages = {148--170},
         year = {2016},
         note = {\url{https://doi.org/10.1016/j.csda.2016.01.016}},
         author = {Mount, David M. and Netanyahu, Nathan S. and Piatko, Christine D. and  Wu, Angela Y. and  Silverman, Ruth}
}

@book{ng2011,
      title={{Dirichlet and Related Distributions: Theory, Methods and Applications}},
      author={Ng, Kai Wang and Tian, Guo-Liang and Tang, Man-Lai},
      year={2011},
      publisher={John Wiley \& Sons},
      edition={1st},
      address={Chichester},
      series={Wiley Series in Probability and Statistics},
      note = {\url{https://doi.org/10.1002/9781119995784}}
}

@article{NGUYEN2010,
         title = {Outlier detection and least trimmed squares approximation using semi-definite programming},
         journal = {Computational Statistics \& Data Analysis},
         volume = {54},
         number = {12},
         pages = {3212--3226},
         year = {2010},
         note = {\url{https://doi.org/10.1016/j.csda.2009.09.037}},
         author = {T.D. Nguyen and R. Welsch}
}

@book{Nocedal2006, 
      address={New York}, 
      series={Springer Series in Operations Research and Financial Engineering}, 
      title={Numerical Optimization}, 
      publisher={Springer}, 
      author={Nocedal, Jorge and Wright, Stephen J.}, 
      year={2006}, 
      edition={2nd},
      note={\url{https://doi.org/10.1007/978-0-387-40065-5}}
}

@article{Nurunnabi2014,
         author = {Nurunnabi, A.A.M. and  Hadi, Ali S. and  Imon, A.H.M.R.},
         title = {Procedures for the identification of multiple influential observations in linear regression},
         journal = {Journal of Applied Statistics},
         volume = {41},
         number = {6},
         pages = {1315--1331},
         year = {2014},
         note = { \url{https://doi.org/10.1080/02664763.2013.868418}}
}

@Incollection{owen2007,
              title={A robust hybrid of lasso and ridge regression},
              author={Owen, Art B},
              editor={Verducci, J. S. and Shen, X. and Lafferty, J.},
              booktitle={AMS-IMS-SIAM Joint Summer Research Conference Machine and Statistical Learning: Prediction and Discovery },
              series={Contemporary Mathematics},
              publisher={American Mathematical Society},
              edition={1st},
              volume={443},
              pages={59--72},
              address={Providence, Rhode Island},
              year={2007},
              note={\url{http://dx.doi.org/10.1090/conm/443/08555}},
}

@article{pena1995,
         title={{The Detection of Influential Subsets in Linear Regression by using an Influence Matrix}},
         author={Pe{\~n}a, Daniel and Yohai, Victor J},
         journal={Journal of the Royal Statistical Society: Series B (Methodological)},
         volume={57},
         number={1},
         pages={145--156},
         year={1995},
         note={\url{https://doi.org/10.1111/j.2517-6161.1995.tb02020.x}}
}

@article{PhamDinh1997,
         title={{Convex analysis approach to DC programming: Theory, Algorithms and Applications}},
         author={Pham Dinh, Tao and Le Thi, Hoai An},
         journal={Acta Mathematica Vietnamica},
         volume={22},
         number={1},
         pages={289--355},
         year={1997}
}

@Incollection{PhamDinh2014,
              author="Pham Dinh, Tao and Le Thi, Hoai An",
              editor="Nguyen, Ngoc-Thanh and Le Thi, Hoai An",
              title={{Recent Advances in DC Programming and DCA}},
              booktitle="Transactions on Computational Intelligence XIII",
              year="2014",
              publisher="Springer",
              address="Berlin, Heidelberg",
              pages="1--37",
              note={\url{https://doi.org/10.1007/978-3-642-54455-2_1}}
}

@article{Plackett1972,
         author = {Plackett, R. L.},
         title = {{Studies in the History of Probability and Statistics. XXIX: The discovery of the method of least squares}},
         journal = {Biometrika},
         volume = {59},
         number = {2},
         pages = {239--251},
         year = {1972},
         note = {\url{https://doi.org/10.1093/biomet/59.2.239}}
}

@incollection{Pop2017,
              author="Pop, Vasile and Furdui, Ovidiu",
              title={{Applications of Cayley--Hamilton Theorem}},
              bookTitle="Square Matrices of Order 2: Theory, Applications, and Problems",
              year="2017",
              publisher="Springer International Publishing",
              address="Cham",
              pages="107--182",
              edition={1st},
              note={\url{https://doi.org/10.1007/978-3-319-54939-2_3}}
}

@article{QU2021,
         title = {{A new approach to estimating earnings forecasting models: Robust regression MM-estimation}},
         journal = {International Journal of Forecasting},
         volume = {37},
         number = {2},
         pages = {1011--1030},
         year = {2021},
         note = {\url{https://doi.org/10.1016/j.ijforecast.2020.11.003}},
         author = {Qu, Li},
}

@book{reimann2008,
      title={{Statistical Data Analysis Explained: Applied Environmental Statistics with R}},
      author={Reimann, Clemens and Filzmoser, Peter and Garrett, Robert and Dutter, Rudolf},
      year={2008},
      edition={},
      publisher={John Wiley \& Sons},
      address={Chichester (England)},
      note={\url{https://doi.org/10.1002/9780470987605}}
}

@article{rousseeuw1984,
         title={{Least Median of Squares Regression}},
         author={Rousseeuw, Peter J},
         journal={Journal of the American Statistical Association},
         volume={79},
         number={388},
         pages={871--880},
         year={1984},
         note={\url{https://doi.org/10.2307/2288718}}
}

@book{rousseeuw1987,
      title={{Robust Regression and Outlier Detection}},
      author={Rousseeuw, Peter J and Leroy, Annick M},
      year={1987},
      address={New York},
      publisher={John Wiley \& Sons},
      note={\url{https://doi.org/10.1002/0471725382}},
      series={Wiley Series in Probability and Statistics}
}

@article{Rousseeuw1990,
         author = {Rousseeuw, Peter J. and van Zomeren, Bert C.},
         title = {{Unmasking Multivariate Outliers and Leverage Points}},
         journal = {Journal of the American Statistical Association},
         volume = {85},
         number = {411},
         pages = {633--639},
         year = {1990},
         note = {\url{https://doi.org/10.1080/01621459.1990.10474920}}
}

@article{ROUSSEEUW1992,
         title = {A comparison of some quick algorithms for robust regression},
         journal = {Computational Statistics \& Data Analysis},
         volume = {14},
         number = {1},
         pages = {107--116},
         year = {1992},
         note = {\url{https://doi.org/10.1016/0167-9473(92)90085-T}},
         author = {Rousseeuw, Peter J. and van Zomeren, Bert C. },
}

@article{Rousseeuw1997,
         note = {\url{http://www.jstor.org/stable/4355978}},
         author = {Rousseeuw, Peter J. and  Hubert, Mia},
         journal = {Lecture Notes-Monograph Series},
         pages = {201--214},
         title = {{Recent Developments in PROGRESS}},
         volume = {31},
         year = {1997}
}

@article{rousseeuw2006,
         title={{Computing LTS Regression for Large Data Sets}},
         author={Rousseeuw, Peter J and Van Driessen, Katrien},
         journal={Data Mining and Knowledge Discovery},
         volume={12},
         pages={29--45},
         year={2006},
         note={\url{https://doi.org/10.1007/s10618-005-0024-4}}
}

@article{rousseeuw2011,
         title={Robust statistics for outlier detection},
         author={Rousseeuw, Peter J and Hubert, Mia},
         journal={Wiley Interdisciplinary Reviews: Data Mining and Knowledge Discovery},
         volume={1},
         number={1},
         pages={73--79},
         year={2011},
         note={\url{https://doi.org/10.1002/widm.2}}
}

@article{SABZEKAR2021,
         title = {Robust regression using support vector regressions},
         journal = {Chaos, Solitons \& Fractals},
         volume = {144},
         pages = {110738},
         year = {2021},
         note = {\url{https://doi.org/10.1016/j.chaos.2021.110738}},
         author = {Mostafa Sabzekar and Seyed Mohammad Hossein Hasheminejad}
}

@Incollection{Salgado2016,
              author={Salgado, C{\'a}tia M and Azevedo, Carlos and Proen{\c{c}}a, Hugo and Vieira, Susana M},
              editor={{MIT Critical Data}},
              title={{Noise Versus Outliers}},
              booktitle={Secondary Analysis of Electronic Health Records},
              year="2016",
              edition="1st",
              publisher="Springer International Publishing",
              address="Cham",
              pages="163--183",
              note={\url{https://doi.org/10.1007/978-3-319-43742-2_14}}
}

@article{Salibian-Barrera2006,
         author = {Salibian-Barrera, Matías and  Yohai, Víctor J},
         title = {{A Fast Algorithm for S-Regression Estimates}},
         journal = {Journal of Computational and Graphical Statistics},
         volume = {15},
         number = {2},
         pages = {414--427},
         year = {2006},
         note = {\url{https://doi.org/10.1198/106186006X113629}},
}

@article{Satman2012,
         author = {M. Hakan Satman},
         title = {{A Genetic Algorithm Based Modification on the LTS Algorithm for Large Data Sets}},
         journal = {Communications in Statistics - Simulation and Computation},
         volume = {41},
         number = {5},
         pages = {644--652},
         year = {2012},
         note = {\url{https://doi.org/10.1080/03610918.2011.598989}},
}

@article{satman2013,
         title={{A New Algorithm for Detecting Outliers in Linear Regression}},
         author={Satman, Mehmet Hakan},
         journal={International Journal of Statistics and Probability},
         volume={2},
         number={3},
         pages={101--109},
         year={2013},
         note = {\url{https://doi.org/10.5539/ijsp.v2n3p101}}
}

@article{Schelter2018,
         author = {Schelter, Sebastian and Lange, Dustin and Schmidt, Philipp and Celikel, Meltem and Biessmann, Felix and Grafberger, Andreas},
         title = {Automating large-scale data quality verification},
         year = {2018},
         volume = {11},
         number = {12},
         note = {\url{https://doi.org/10.14778/3229863.3229867}},
         journal = {Proceedings of the VLDB Endowment},
         pages = {1781--1794},
         numpages = {14}
}

@article{scikit-learn,
         title={Scikit-learn: Machine Learning in {P}ython},
         author={Pedregosa, F. and Varoquaux, G. and Gramfort, A. and Michel, V. and Thirion, B. and Grisel, O. and Blondel, M. and Prettenhofer, P. and Weiss, R. and Dubourg, V. and Vanderplas, J. and Passos, A. and Cournapeau, D. and Brucher, M. and Perrot, M. and Duchesnay, E.},
          journal={Journal of Machine Learning Research},
          volume={12},
          pages={2825--2830},
          year={2011}
}

@article{She2011,
         author = {Yiyuan She and Art B. Owen},
         title = {{Outlier Detection Using Nonconvex Penalized Regression}},
         journal = {Journal of the American Statistical Association},
         volume = {106},
         number = {494},
         pages = {626--639},
         year = {2011},
         note = {\url{https://doi.org/10.1198/jasa.2011.tm10390}}
}

@book{SheilSmall2002, 
      address={Cambridge}, 
      series={Cambridge Studies in Advanced Mathematics}, 
      title={Complex Polynomials}, 
      publisher={Cambridge University Press}, 
      author={Sheil-Small, T.}, 
      year={2002}, 
      edition={1st},
      note={\url{https://doi.org/10.1017/CBO9780511543074}}
}

@ARTICLE{Shen2013,
         author={Shen, Fumin and Shen, Chunhua and van den Hengel, Anton and Tang, Zhenmin},
         journal={IEEE Transactions on Image Processing}, 
         title={{Approximate Least Trimmed Sum of Squares Fitting and Applications in Image Analysis}}, 
         year={2013},
         volume={22},
         number={5},
         pages={1836--1847},
         note={\url{https://doi.org/10.1109/TIP.2013.2237914}}
}

@article{SMITI2020,
         title = {A critical overview of outlier detection methods},
         journal = {Computer Science Review},
         volume = {38},
         pages = {100306},
         year = {2020},
         note = {\url{https://doi.org/10.1016/j.cosrev.2020.100306}},
         author = {Smiti, Abir}
}

@INPROCEEDINGS{Solodov2011,
               author = {Solodov, Mikhail V.},
               publisher = {John Wiley \& Sons},
               title = {{Constraint Qualifications}},
               booktitle = {Wiley Encyclopedia of Operations Research and Management Science},
               chapter = {},
               pages = {},
               note = {\url{https://doi.org/10.1002/9780470400531.eorms0978}},
               year = {2011},
}

@misc{stuart2011,
      title={{Robust Regression. Department of Mathematical Sciences, University of Durham, Durham, UK}},
      author={Stuart, Chaterine},
      year={2011},
}

@INPROCEEDINGS{Subbarao2006,
               author={Subbarao, R. and Meer, P.},
               booktitle={2006 Conference on Computer Vision and Pattern Recognition Workshop (CVPRW'06)}, 
               title={{Beyond RANSAC: User Independent Robust Regression}}, 
               year={2006},
               pages={101--101},
               note={\url{https://doi.org/10.1109/CVPRW.2006.43}}
}

@article{SudermannMerx2021,
         title={{Leveraged least trimmed absolute deviations}},
         author={Sudermann-Merx, Nathan and Rebennack, Steffen},
         journal={OR Spectrum},
         volume={43},
         pages={809--834},
         year={2021},
         note={\url{https://doi.org/10.1007/s00291-021-00627-y}}
}

@misc{Thormann2024,
      title={{The Boosted Difference of Convex Functions Algorithm for Value-at-Risk Constrained Portfolio Optimization, arXiv preprint arXiv:2402.09194 [math.OC]}},
      author={Thormann, Marah-Lisanne and Vuong, Phan Tu and Zemkoho, Alain},
      year={2024},
      note={\url{https://doi.org/10.48550/arXiv.2402.09194}}
}

@article{TORTI2012,
         title = {{Benchmark testing of algorithms for very robust regression: FS, LMS and LTS}},
         journal = {Computational Statistics \& Data Analysis},
         volume = {56},
         number = {8},
         pages = {2501--2512},
         year = {2012},
         note = {\url{https://doi.org/10.1016/j.csda.2012.02.003}},
         author = {Torti, Francesca and  Perrotta, Domenico and  Atkinson, Anthony C. and  Riani, Marco},
}

@article{Tukey1962,
         note = {\url{http://www.jstor.org/stable/2237638}},
         author = {John W. Tukey},
         journal = {The Annals of Mathematical Statistics},
         number = {1},
         pages = {1--67},
         publisher = {Institute of Mathematical Statistics},
         title = {{The Future of Data Analysis}},
         volume = {33},
         year = {1962}
}

@book{Tutz2011, 
      address={Cambridge}, 
      series={Cambridge Series in Statistical and Probabilistic Mathematics}, 
      title={{Regression for Categorical Data}}, 
      publisher={Cambridge University Press}, 
      author={Tutz, Gerhard}, 
      year={2011},
      note={\url{https://doi.org/10.1017/CBO9780511842061}}
}

@book{tuy2018,
      title={{Convex Analysis and Global Optimization}},
      author={Tuy, Hoang},
      year={2016},
      edition={2nd},
      publisher={Springer},
      address={Cham},
      note={\url{https://doi.org/10.1007/978-3-319-31484-6}}
}

@article{Verardi2009,
         author = {Vincenzo Verardi and Christophe Croux},
         title ={{Robust Regression in Stata}},
         journal = {The Stata Journal},
         volume = {9},
         number = {3},
         pages = {439--453},
         year = {2009},
         note = {\url{https://doi.org/10.1177/1536867X0900900306}}
}

@article{Verboven2010,
         author = {Verboven, Sabine and Hubert, Mia},
         title ={{MATLAB library LIBRA}},
         journal = {WIREs Computational Statistics},
         volume = {2},
         number={4},
         pages = {509--515},
         year = {2010},
         note = {\url{https://doi.org/10.1002/wics.96}}
}

@misc{wang2015,
      title={{Projection onto the capped simplex, arXiv preprint arXiv:1503.01002 [cs.LG]}},
      author={Wang, Weiran and Lu, Canyi},
      year={2015},
      note={\url{https://doi.org/10.48550/arXiv.1503.01002}}
}

@article{Western1995,
         note = {\url{https://doi.org/10.2307/2111654}},
         author = {Bruce Western},
         journal = {American Journal of Political Science},
         number = {3},
         pages = {786--817},
         title = {{Concepts and Suggestions for Robust Regression Analysis}},
         volume = {39},
         year = {1995}
}

@Incollection{Wilcox2010,
              author="Wilcox, Rand R.",
              editor="Wilcox, Rand R.",
              title="Robust Regression",
              bookTitle="Fundamentals of Modern Statistical Methods: Substantially Improving Power and Accuracy",
              year="2010",
              edition="2nd",
              publisher="Springer",
              address="New York",
              pages="193--215",
              note={\url{https://doi.org/10.1007/978-1-4419-5525-8_11}}
}

@article{wilcox2023,
         title={{Regression: Identifying Good and Bad Leverage Points}},
         author={Wilcox, Rand R. and Xu, Lai},
         journal={International Journal of Statistics and Probability},
         volume={12},
         number={1},
         pages={1--9},
         year={2023},
         note={\url{https://doi.org/10.5539/ijsp.v12n1p1}}
}

@article{Wu2023,
         author = {Wu, Can and Cui, Ying and Li, Donghui and Sun, Defeng},
         title = {{Convex and Nonconvex Risk-Based Linear Regression at Scale}},
         journal = {INFORMS Journal on Computing},
         volume = {35},
         number = {4},
         pages = {797--816},
         year = {2023},
         note = {\url{https://doi.org/10.1287/ijoc.2023.1282}}
}

@article{XIE2022,
         title = {Partial least trimmed squares regression},
         journal = {{Chemometrics and Intelligent Laboratory Systems}},
         volume = {221},
         pages = {104486},
         year = {2022},
         note = {\url{https://doi.org/10.1016/j.chemolab.2021.104486}},
         author = {Zhonghao Xie and Xi'an Feng and Xiaojing Chen}
}

@book{Yagi2021, 
      series={SpringerBriefs in Mathematics}, 
      title={Abstract Parabolic Evolution Equations and \L ojasiewicz–Simon Inequality~I: Abstract Theory}, 
      publisher={Springer}, 
      address={Singapore},
      edition={1st},
      author={Yagi, Atsushi}, 
      year={2021}, 
      note={\url{https://doi.org/10.1007/978-981-16-1896-3}}
}

@article{yu2014,
         title={An adversarial optimization approach to efficient outlier removal},
         author={Yu, Jin and Eriksson, Anders and Chin, Tat-Jun and Suter, David},
         journal={Journal of Mathematical Imaging and Vision},
         volume={48},
         pages={451--466},
         year={2014},
         note = {\url{https://doi.org/10.1007/s10851-013-0418-7}}
}

@article{ZHANG2023,
         title = {A survey for solving mixed integer programming via machine learning},
         journal = {Neurocomputing},
         volume = {519},
         pages = {205--217},
         year = {2023},
         note = {\url{https://doi.org/10.1016/j.neucom.2022.11.024}},
         author = {Zhang, Jiayi and  Liu, Chang and  Li, Xijun and  Zhen, Hui-Ling and  Yuan, Mingxuan and  Li, Yawen and  Yan, Junchi}
}

@article{Zioutas2009,
         title = {Quadratic mixed integer programming and support vectors for deleting outliers in robust regression},
         journal = {Annals of Operations Reserach},
         volume = {166},
         pages = {339–-353},
         year = {2009},
         note = {\url{https://doi.org/10.1007/s10479-008-0412-4}},
         author = {Zioutas, G. and Pitsoulis, L. and Avramidis, A.}
}

@misc{zuo2022,
      title={{New algorithms for computing the least trimmed squares estimator, arXiv preprint, arXiv:2203.10387 [stat.CO]}}, 
      author={Yijun Zuo},
      year={2022},
      note={\url{https://doi.org/10.48550/arXiv.2203.10387}}, 
}


\appendix

\setcitestyle{authoryear,open={(},close={)}}





\section{Feasible Set}\label{sec:feasible_set}


In Section~\ref{sec:LTS_as_DC}, the capped simplex $\Delta_h$ was introduced as the feasible set of the optimization problem considered in this paper.
To provide better insight into its geometric structure, this appendix includes a visualization of $\Delta_h$ for $h \in \{1, 2\}$ from different perspectives, as shown in Figure \ref{fig:capped_simplex}.
Specifically, the three subplots are based on three variables $z_1, z_2, z_3 \in [0, 1]$, which satisfy either $z_1 + z_2 + z_3 = 2$ (green triangle), or $z_1 + z_2 + z_3 = 1$ (blue triangle). 
The latter triangle is also known as the probability or unit simplex. 
From these representations, it can be observed that the feasible set is closed, bounded, and convex, as it corresponds to a hypercube (due to the box constraints) intersected by a hyperplane (due to the equality constraint). 


\vspace{-0.2cm}

\begin{figure}[H]%
    \centering
    \captionsetup{justification=centering}
    \includegraphics[width=1\textwidth]{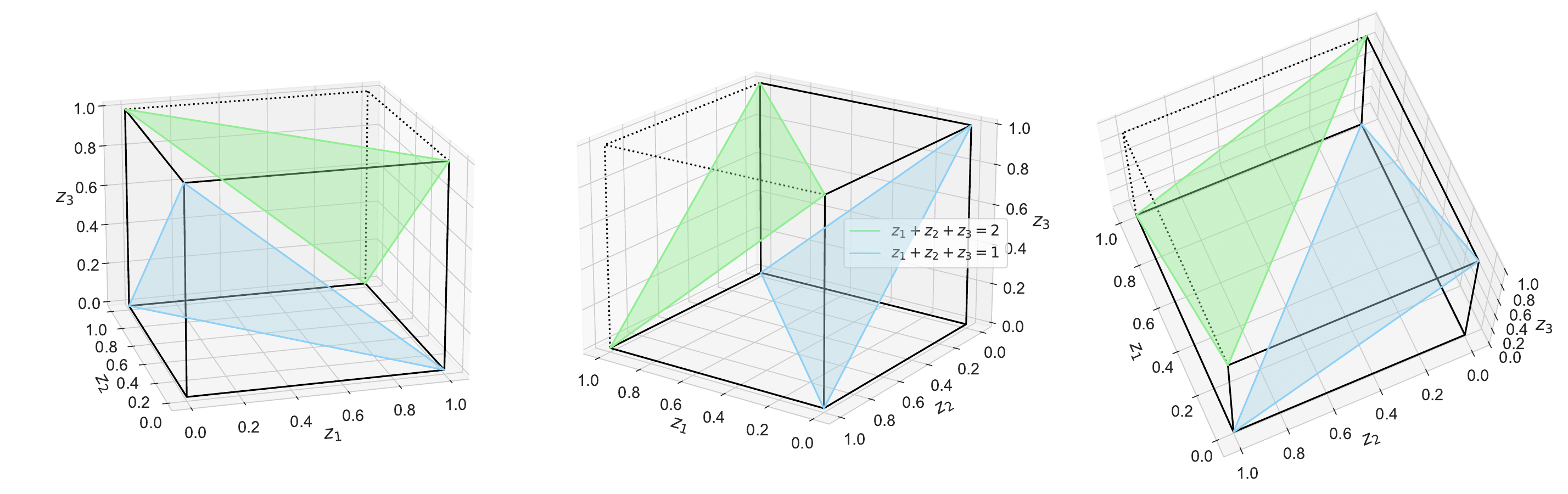}
    \caption{Different Perspectives on the Probability and Capped Simplex.}%
    \label{fig:capped_simplex}%
\end{figure}

\vspace{-0.2cm}



\section{Solving the Subproblem with Projection onto Capped Simplex}\label{sec:projection_onto_capped_simplex}


In Subsection \ref{sec:sBDCA}, the general procedure of \gls{sBDCA} for the \gls{LTS} problem was outlined.
For brevity, we omitted the details of how the projection problem \eqref{eq:projection_onto_capped_simplex} is solved in practice and how its solution is related to the minimizer of \eqref{eq:subproblem_lts}.
To complete the algorithmic description, this appendix provides the corresponding explanations. 
At each iteration, the strongly convex subproblem~\eqref{eq:subproblem_lts} must be constructed and solved anew, which significantly contributes to the overall computation time.
Favorably, for the chosen \gls{DC} decomposition, the strongly convex approximation $\Tilde{f}_k (\mathbf{z})$ around $\mathbf{z}_k$ is a quadratic function, which can be expressed in standard form as
\begin{align}\label{eq:qp}
    \begin{split}        
        \Tilde{f}_k (\mathbf{z}) &= \phi_1(\mathbf{z}) - \mathbf{z}^\top \nabla \phi_2(\mathbf{z}_k)\\
        &= \rho_k \cdot \lVert \mathbf{z} \rVert ^2 +  \mathbf{z}^\top \big (- \nabla \phi_2 (\mathbf{z}_k) \big ) \\
        &= \rho_k \cdot \mathbf{z}^\top \mathbf{z} +  \mathbf{z}^\top \big( - \nabla \phi_2 (\mathbf{z}_k) \big ) + r_k,
    \end{split}
\end{align}
where $r_k = 0$ (for all $k$) is the constant term that could influence the optimal objective function value $\Tilde{f}_k^*$ but does not change the optimal decision variables $\Tilde{\mathbf{y}}_k$. 
Consequently, the strongly convex subproblem~\eqref{eq:subproblem_lts} is a \gls{QP} that, in principle, could be solved based on various methods such as interior-point, active set or augmented Lagrangian [cf. \citet[ch.~9.3]{Giorgi2023}; \citet[ch.~16]{Nocedal2006}]. 
In our practical implementation, we experimented with various approaches.
Ultimately, we opted for the projection procedure proposed by \citet{ang2021}, since it returns a high accuracy solution for $\Tilde{\mathbf{y}}_k$, and is drastically faster than state-of-the-art commercial solvers like Gurobi that utilizes an interior-point method [cf. \citet[p.~7]{ang2021}].
This performance difference becomes particularly evident in higher dimensions.


Before describing the projection procedure proposed by \citet{ang2021} in more detail, we note that their algorithm does not directly minimize the objective function $\Tilde{f}_k (\mathbf{z})$ subject to the given constraints.
Instead, it solves the following \gls{QP}
\begin{align}\label{eq:projection_onto_capped_simplex2}\tag{$\mathcal{P}_{\Delta_h}$}
 \begin{split}
      P_{\Delta_h} \left ( \Tilde{\mathbf{z}}_k \right) = \underset{\mathbf{z} \in \Delta_h}{\text{arg min}} \ \  \frac{1}{2} \cdot \bigl \lVert \mathbf{z} - \Tilde{\mathbf{z}}_k  \bigr \rVert^2,
 \end{split}
\end{align}
which is known as the projection of $\Tilde{\mathbf{z}}_k = \begin{bmatrix} \Tilde{z}_{k, 1}, \ldots, \Tilde{z}_{k, n} \end{bmatrix}^\top \in \mathbb{R}^n$ onto the capped simplex [cf.~Proposition \ref{prop:projection_theorem}]. 
By requiring that $\Tilde{\mathbf{z}}_k = \mathbf{z}_k - \eta_k \cdot \nabla f(\mathbf{z}_k)$ with $\eta_k = \frac{1}{2 \cdot \rho_k}$, it can be shown that the optimal solution of~\eqref{eq:projection_onto_capped_simplex} coincides with $\Tilde{\mathbf{y}}_k$, which is the minimizer of the convex subproblem~\eqref{eq:subproblem_lts}. 
In particular, we only have to add the constant
\begin{align*}
    \Bar{r}_k &:=  \rho_k \cdot \bigl \lVert \mathbf{z}_k \bigr \rVert ^2 -  \mathbf{z}_k^\top \nabla f(\mathbf{z}_k) + \frac{1}{4 \cdot \rho_k} \cdot \bigl \lVert \nabla f(\mathbf{z}_k) \bigr \rVert^2 = \frac{1}{4 \cdot \rho_k} \cdot \bigl \lVert \nabla \phi_2 (\mathbf{z}_k) \bigr \rVert ^2
\end{align*}
to the objective function  $\Tilde{f}_k (\mathbf{z})$ and rescale the sum by $\frac{1}{2 \cdot \rho_k}$ to obtain 
\begin{align}
    \Bar{\phi}_{k} (\mathbf{z}) 
    :&=  \frac{1}{2 \cdot \rho_k} \cdot \Bigl ( \Tilde{f}_k (\mathbf{z}) + \Bar{r}_k \Bigr ) \\
    &= \frac{1}{2} \cdot \Bigl\lVert \mathbf{z} -  \frac{1}{2 \cdot \rho_k} \cdot \Bigl (2 \cdot \rho_k \cdot \mathbf{z}_k - \nabla f(\mathbf{z}_k) \Bigr ) \Bigr \rVert ^2 \\
    &= \frac{1}{2} \cdot \Bigl \lVert \mathbf{z} -  \Big ( \mathbf{z}_k - \eta_k \cdot \nabla f(\mathbf{z}_k) \Big ) \Bigr \rVert ^2  \label{eq:two} \\
    &= \frac{1}{2} \cdot \bigl \lVert \mathbf{z} -  \Tilde{\mathbf{z}}_k \bigr \rVert ^2, 
\end{align}
which is the objective function of the projection procedure \eqref{eq:projection_onto_capped_simplex}. 
As adding a constant term $\Bar{r}_k$ and rescaling by a constant factor $\frac{1}{2 \cdot \rho_k}$ does not alter the optimal solution of problem~\eqref{eq:subproblem_lts}, one can equivalently solve the problem  \eqref{eq:projection_onto_capped_simplex} to obtain $\Tilde{\mathbf{y}}_k$.


\vspace{0.2cm}

\begin{prop}[{Projection {\small [\citet[p. 201]{bertsekas1999}]}}] {prop:projection_theorem}
    Let $\Omega$ be a nonempty, closed, and convex subset of $\mathbb{R}^n$, then it holds that
    \begin{itemize}
        \item For every $\mathbf{z} \in \mathbb{R}^n$, there exist a unique $\mathbf{x}^* \in \Omega$ that minimizes $\lVert \mathbf{z} - \mathbf{x} \rVert$ over all $\mathbf{x} \in \Omega$. This vector is called the \textbf{projection} of $\mathbf{z}$ on $\Omega$ and is denoted by $P_{\Omega}(\mathbf{z})$.
        \item Given some $\mathbf{z} \in \mathbb{R}^n$, a vector $\mathbf{x}^* \in \Omega$ is equal to the projection $P_{\Omega}(\mathbf{z})$ if and only if
        \begin{equation*}
            \langle \mathbf{z} - \mathbf{x}^*, \mathbf{x} - \mathbf{x}^* \rangle \leq 0, \quad \forall \ \mathbf{x} \in \Omega.
        \end{equation*}
        \item The mapping $f: \mathbb{R}^n \rightarrow \Omega$ defined by $f(\mathbf{x}) = P_{\Omega}(\mathbf{x})$ is continuous and nonexpansive, that is, 
        \begin{equation*}
            \lVert P_{\Omega}(\mathbf{x}) - P_{\Omega}(\mathbf{y}) \rVert \leq \lVert \mathbf{x} - \mathbf{y} \rVert, \quad \forall \ \mathbf{x}, \mathbf{y} \in \mathbb{R}^n.
        \end{equation*}
        \item In the case that $\Omega$ is a subspace, a vector $\mathbf{x}^* \in \Omega$ is equal to the projection $P_{\Omega}(\mathbf{z})$ if and only if $\mathbf{z} - \mathbf{x}^*$ is orthogonal to $\Omega$, that is,
        \begin{equation*}
            \langle \mathbf{z} - \mathbf{x}^*, \mathbf{x} \rangle = 0 \quad \forall \ \mathbf{x} \in \Omega.
        \end{equation*}
    \end{itemize}
\end{prop}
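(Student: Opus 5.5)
The statement to prove is the last one shown, Proposition~\ref{prop:projection_theorem}, the standard projection theorem of \citet[p. 201]{bertsekas1999}. I will prove its four items in order. The tools are strong convexity of the squared distance, compactness, and a first-order variational argument.

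\emph{Existence and uniqueness.} Fix $\mathbf{z}\in\mathbb{R}^n$ and pick any $\bar{\mathbf{x}}\in\Omega$, which is possible since $\Omega$ is nonempty. Minimizing $\lVert\mathbf{z}-\mathbf{x}\rVert^2$ over $\Omega$ is the same as minimizing it over
\[
\Omega\cap\{\mathbf{x}:\lVert\mathbf{z}-\mathbf{x}\rVert\le\lVert\mathbf{z}-\bar{\mathbf{x}}\rVert\}.
\]
This set is closed and bounded, hence compact, so Weierstrass gives a minimizer. For uniqueness, note that $\mathbf{x}\mapsto\lVert\mathbf{z}-\mathbf{x}\rVert^2$ is strongly convex with modulus $\rho=2$ in the sense of Definition~\ref{defi:convex_function}. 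Suppose $\mathbf{x}_1\neq\mathbf{x}_2$ are two minimizers with common value $m$. Their midpoint lies in $\Omega$ by convexity, and strong convexity gives it value at most $m-\tfrac14\lVert\mathbf{x}_1-\mathbf{x}_2\rVert^2<m$, which is a contradiction.

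\emph{Variational characterization.} Write $\mathbf{x}^*=P_\Omega(\mathbf{z})$. For necessity, take any $\mathbf{x}\in\Omega$ and $t\in(0,1]$. The point $\mathbf{x}^*+t(\mathbf{x}-\mathbf{x}^*)$ lies in $\Omega$, so
\[
0\le\lVert\mathbf{z}-\mathbf{x}^*-t(\mathbf{x}-\mathbf{x}^*)\rVert^2-\lVert\mathbf{z}-\mathbf{x}^*\rVert^2=-2t\langle\mathbf{z}-\mathbf{x}^*,\mathbf{x}-\mathbf{x}^*\rangle+t^2\lVert\mathbf{x}-\mathbf{x}^*\rVert^2 .
\]
Dividing by $t$ and letting $t\downarrow0$ yields $\langle\mathbf{z}-\mathbf{x}^*,\mathbf{x}-\mathbf{x}^*\rangle\le0$. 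For sufficiency, suppose $\mathbf{x}^*\in\Omega$ satisfies the inequality. Expanding
\[
\lVert\mathbf{z}-\mathbf{x}\rVert^2=\lVert\mathbf{z}-\mathbf{x}^*\rVert^2-2\langle\mathbf{z}-\mathbf{x}^*,\mathbf{x}-\mathbf{x}^*\rangle+\lVert\mathbf{x}-\mathbf{x}^*\rVert^2\ge\lVert\mathbf{z}-\mathbf{x}^*\rVert^2
\]
shows $\mathbf{x}^*$ is a minimizer, so by uniqueness it equals $P_\Omega(\mathbf{z})$.

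\emph{Nonexpansiveness.} This is the step needing the most care. Let $\mathbf{p}=P_\Omega(\mathbf{x})$ and $\mathbf{q}=P_\Omega(\mathbf{y})$. Apply the characterization twice, once at $\mathbf{x}$ with test point $\mathbf{q}$ and once at $\mathbf{y}$ with test point $\mathbf{p}$:
\[
\langle\mathbf{x}-\mathbf{p},\mathbf{q}-\mathbf{p}\rangle\le0,\qquad \langle\mathbf{y}-\mathbf{q},\mathbf{p}-\mathbf{q}\rangle\le0 .
\]
Adding the two gives $\lVert\mathbf{p}-\mathbf{q}\rVert^2\le\langle\mathbf{x}-\mathbf{y},\mathbf{p}-\mathbf{q}\rangle$. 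Cauchy--Schwarz then bounds the right-hand side by $\lVert\mathbf{x}-\mathbf{y}\rVert\,\lVert\mathbf{p}-\mathbf{q}\rVert$. If $\mathbf{p}=\mathbf{q}$ there is nothing to show; otherwise divide by $\lVert\mathbf{p}-\mathbf{q}\rVert$ to get $\lVert\mathbf{p}-\mathbf{q}\rVert\le\lVert\mathbf{x}-\mathbf{y}\rVert$. Continuity follows, since nonexpansiveness means the map is $1$-Lipschitz.

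\emph{Subspace case.} If $\Omega$ is a subspace and $\mathbf{x}^*\in\Omega$, then as $\mathbf{x}$ ranges over $\Omega$ the difference $\mathbf{x}-\mathbf{x}^*$ also ranges over all of $\Omega$. So the characterization reads $\langle\mathbf{z}-\mathbf{x}^*,\mathbf{w}\rangle\le0$ for all $\mathbf{w}\in\Omega$. Since $-\mathbf{w}\in\Omega$ as well, the reverse inequality also holds, giving equality to zero. Conversely, orthogonality gives the inequality trivially, so $\mathbf{x}^*=P_\Omega(\mathbf{z})$ by the previous item.
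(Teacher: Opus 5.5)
Your proof is correct and is essentially the standard argument: the paper gives no proof of its own and only cites Bertsekas's \emph{Nonlinear Programming} (p.~201), where the result is proved the same way — Weierstrass on the compact set $\Omega\cap\{\mathbf{x}:\lVert\mathbf{z}-\mathbf{x}\rVert\le\lVert\mathbf{z}-\bar{\mathbf{x}}\rVert\}$ with uniqueness from strict convexity of the squared norm, the variational inequality from first-order optimality, nonexpansiveness by adding the two variational inequalities and applying Cauchy--Schwarz, and the subspace case by testing with $\mathbf{x}^*\pm\mathbf{x}$. The only difference is cosmetic: you obtain the variational inequality by expanding the squared distance directly, whereas Bertsekas invokes the general optimality condition for minimizing a convex differentiable function over a convex set, so your version is self-contained.
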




Let us now examine the main steps of the fast projection algorithm by \citet{ang2021} shown in Algorithm~\ref{alg:fast_projection}. 
The core idea of the procedure is to reformulate problem \eqref{eq:projection_onto_capped_simplex} as a scalar minimization with respect to the Lagrange multiplier of the equality constraint, and to solve the resulting problem using Newton’s method. 
For this purpose, we first construct a min-max problem that is equivalent to \eqref{eq:projection_onto_capped_simplex} and has the following form
\begin{equation}\label{eq:partial_lagrangian}
    \underset{\mathbf{z} \in [0, 1]^n}{\text{minimize}} \quad \underset{\gamma \in \mathbb{R}}{\text{maximize}} \quad \mathcal{L} \left( \mathbf{z}, \gamma \right) := \frac{1}{2} \cdot \bigl \lVert \mathbf{z} - \Tilde{\mathbf{z}}_k  \bigr \rVert^2 + \gamma \cdot (\mathbf{z}^\top \mathbf{1}_n - h),
\end{equation}
where $\mathcal{L}: \mathbb{R}^n \times \mathbb{R} \rightarrow \mathbb{R}$ denotes the partial Lagrangian obtained by introducing the Lagrange multiplier $\gamma \in \mathbb{R}$ associated with the equality constraint $\Tilde{h}(\mathbf{z})$. 
If we then assume that the optimal value for the inner maximization, denoted by $\gamma^*$, is already known, the outer minimization admits a closed-form solution of the form
\begin{align}\label{eq:closed_form_solution}
    \begin{split}
    \Tilde{\mathbf{y}}_k  &= \ \underset{\mathbf{z} \in [0, 1]^n}{\text{arg min}} \ \Tilde{\mathcal{L}}(\mathbf{z}) := \sum_{i=1}^n \frac{1}{2} \cdot z_i^2 + z_i \cdot (\gamma^* - \Tilde{z}_{k,i})  
    = \min \Bigl \{ \mathbf{1}_n, \max \big[ \mathbf{0}_n, \mathbf{v}(\gamma^*) \big] \Bigr \},
    \end{split}
\end{align}
where $\min\{\cdot, \cdot\}$ and $\max[\cdot, \cdot]$ are taken element-wise, $\Tilde{\mathcal{L}}: \mathbb{R}^n \rightarrow \mathbb{R}$, and the function $\mathbf{v}: \mathbb{R} \rightarrow \mathbb{R}^n$ is defined as
\begin{equation}
    \mathbf{v}(\gamma) := \Tilde{\mathbf{z}}_k - \gamma \cdot \mathbf{1}_n.
\end{equation}
The previous result then can be used to transform problem \eqref{eq:partial_lagrangian} into the following unconstrained scalar minimization problem 
\begin{equation}\label{eq:scalar_minimization}
    \underset{\gamma \in \mathbb{R}}{\text{minimize}} \quad \omega(\gamma) := - \mathcal{L} \Big (\min \big \{\mathbf{1}_n, \max \big[ \mathbf{0}_n, \mathbf{v}(\gamma) \big] \big\}, \gamma \Big),
\end{equation}
where firstly the order of the min-max was swapped, then the closed form solution shown in \eqref{eq:closed_form_solution} was plugged in, and lastly the maximization with respect to $\gamma$ was converted into a corresponding minimization problem. 
In order to solve problem \eqref{eq:scalar_minimization} based on Newton's method, we only have to construct the first order derivative of $\omega$ that is defined as
\begin{equation}
    \nabla \omega \left(\gamma \right) := h - \min \Big \{ \mathbf{1}_n, \max \big[ \mathbf{0}_n, \mathbf{v} \left(\gamma \right) \big ] \Big \}^\top \mathbf{1}_n,
\end{equation}
as well as the second order derivative of $\omega$ that is given as
\begin{equation}
    \nabla^2 \omega \left(\gamma \right) := \sum_{i=1}^n \mathds{1}_{(0,1)} \left (v_i \right),
\end{equation}
where $v_i \in \mathbb{R}$ is the $i^{\text{th}}$ element of the vector $\mathbf{v}(\gamma)$ for all $i \in \{1, \ldots, n\}$, and $\mathds{1}_{(0,1)}: \mathbb{R} \rightarrow \{0, 1\}$ represents an indicator function that is defined as
\begin{equation}
    \mathds{1}_{(0,1)} \left(v_i \right) :=
    \begin{cases}
        1 \quad \text{if} \quad v_i \in (0, 1), \\
        0 \quad \text{if} \quad v_i \notin (0, 1). \\
    \end{cases}
\end{equation}


\vspace{-0.2cm}

\begin{algorithm}[H]
  \KwInput{$\Tilde{\mathbf{z}}_k \notin \mathbf{0}_n$, $\varepsilon > 0$, $\bar{T} \in \mathbb{N}$}   
  \vspace{0.05cm}
  $t \gets 0$ \\
  \vspace{0.05cm}
  $\delta \gets \infty$ \\
  \vspace{0.05cm}
  $\gamma_0 \gets \frac{\max \left\{\Tilde{\mathbf{z}}_k \right\} - 1 + \max \left\{\Tilde{\mathbf{z}}_k \right \}}{2}$ \\
  \vspace{0.05cm}
  \While{$\delta > \varepsilon$ \textbf{\upshape and} $t < \bar{T}$}
   {
   \vspace{0.05cm}
   $t \gets t + 1$ \\
   \vspace{0.05cm}
   $\gamma_{t} \gets \gamma_{t - 1} - \frac{\nabla \omega(\gamma_{t - 1})}{\nabla^2 \omega(\gamma_{t -1})}$\\
   \vspace{0.05cm}
   $\delta \gets \lVert \gamma_t - \gamma_{t- 1} \rVert$ \\
   }
   \vspace{0.05cm}
   $\Tilde{\mathbf{y}}_k \gets \min \Big \{ \mathbf{1}_n, \max \big[ \mathbf{0}_n, \mathbf{v}(\gamma_t) \big] \Big \}$ \\
   \vspace{0.05cm}
  \KwOutput{$\Tilde{\mathbf{y}}_k$}
\caption{Fast Projection onto $\Delta_h$ [cf. \citet[p. 5]{ang2021}]}\label{alg:fast_projection}

\end{algorithm} 


After examining the main steps of the procedure, it remains to discuss its runtime performance.
The computational cost of Algorithm \ref{alg:fast_projection} is primarily driven by Line~6, which incurs a cost of $\mathcal{O}(n)$ per iteration.
Consequently the total algorithmic cost is determined by $T \cdot \mathcal{O}(n)$, where $T \in \mathbb{N}$ denotes the number of iterations required for convergence [cf. \citet[p. 5]{ang2021}]. 
Even though in theory $T$ could be arbitrary large, empirically one can observe that the algorithm usually converges within a small number of iterations [cf. \citet[p. 5]{ang2021}]. 
Thus, in practice, the computational complexity is roughly about $\mathcal{O}(n)$, and in higher dimensions the projection procedure especially outperforms sorting-based methods as proposed by \citet{wang2015} with time complexity $\mathcal{O}(n^2)$ [cf. \citet[p. 7]{ang2021}]. 
An empirical comparison of the methods and more theoretical details can be found in \citet{ang2021}.



\section{Hat Matrix and Hat Values}\label{sec:hat_values}


In Subsection~\ref{sec:initialization_and_preconditioner}, the hat matrix was briefly introduced to derive a preconditioning matrix for the gradient of the objective function. 
To complete the discussion, this appendix explains how the hat values can be linked to outlier scores in the $\mathbf{x}$-direction and to the standardization of the residuals.
As outlined in Subsection~\ref{sec:initialization_and_preconditioner}, the hat matrix $\mathbf{H}$  possesses a number of noteworthy properties [cf. \citet[p. 15]{morris2011}]. 
Due to these properties, $\mathbf{H}$ also corresponds to the \textit{orthogonal projection matrix} associated with the column space of $\mathbf{X}$ and ensures that the predictions $\Hat{\mathbf{y}} := \mathbf{H}\mathbf{y} \in \mathbb{R}^n$ are as close as possible to $\mathbf{y}$ in the least-squares sense [cf.~\citet[p. 15]{morris2011}]. 
The latter then also highlights that the \gls{OLS} predictions $\hat{\mathbf{y}}$ are linear combinations of the observed responses $\mathbf{y}$, where the weights are provided by the elements of $\mathbf{H}$ that only depends on $\mathbf{X}$ [cf.~\citet[p. 196]{Hocking2003}]. 
Therefore, the influence of $y_j$ exerted on the prediction $\hat{y}_i$ can be deduced based on element $h_{ij}$ of $\mathbf{H} = ( h_{ij} )_{1 \leq i, j \leq n}$ [cf. \citet[p. 22]{Tutz2011}].


Let us now examine the diagonal elements of the hat matrix that are also frequently referred to as \textit{leverage statistics/scores} or \textit{hat values} [cf. \citet[p. 99]{James2021}; \citet[pp. 398--399]{fox2019}]. 
In outlier diagnostic, they are often of particular interest as they give important insights into the underlying data structures, and indicate how strongly the data points influenced their own predictions [cf. \citet[p. 13]{DEMAESSCHALCK20001}]. 
Generally, the hat values are bounded from below and above as the previously mentioned properties of $\mathbf{H}$ imply that
\begin{equation}
    0 < \frac{1}{n} \leq h_{ii} \leq \frac{\bar{n}}{n} \leq 1,
\end{equation}
where $0 < \bar{n} \leq n \in \mathbb{N}$ denotes the number of  rows in $\mathbf{X}$ with unique $\mathbf{x}_i$ [cf. \citet[p. 120]{Fahrmeir2021}]. 
Further, if $h_{ii}$ is close to the upper bound, then the remaining entries in the $i^{th}$ row of  $\mathbf{H}$ are close to zero, and the regression hyperplane nearly intersects the point $(y_i, \mathbf{x}_i)$ [cf. \citet[pp. 180--181]{Fahrmeir2021}]. 
Accordingly, the bigger the hat value, the higher is the influence of the $i$-th data point on the estimation results. 
Apart from these facts, the diagonal elements of $\mathbf{H}$ can also be interpreted as outlier scores in $\mathbf{x}$-direction as they can be expressed as
\begin{equation}
    h_{ii} = \frac{1}{n} + (\Tilde{\mathbf{x}}_i - \bar{\mathbf{x}})  (\Tilde{\mathbf{X}}^\top \Tilde{\mathbf{X}} )^{-1} (\Tilde{\mathbf{x}}_i - \bar{\mathbf{x}})^\top
\end{equation}
where $\Tilde{\mathbf{x}}_i \in \mathbb{R}^{1 \times p-1}$ denotes the covariates of the $i$-th observations excluding the intercept, $\Tilde{\mathbf{X}} \in \mathcal{M}_{n \times p -1}(\mathbb{R})$ represents the centered version of $\mathbf{X}$ without intercept, and $\bar{\mathbf{x}} \in \mathbb{R}^{1 \times p - 1}$ contains the column means of $\mathbf{X}$ also excluding the intercept [cf. \citet[pp. 197--198]{Hocking2003}]. 
Therefore, the higher the hat value, the further $\mathbf{x}_i$ is away from the center of the ellipsoid that is spanned by the covariates [cf. \citet[pp. 197--198]{Hocking2003}]. 
In this regard, it is also often reasonable to assume that the rows of $\mathbf{X}$ are realizations of a multivariate normal distribution such that the previous representation of the diagonal element $h_{ii}$ can be linked to the squared Mahalanobis distance [cf.~Definition~\ref{defi:Mahalanobis_distance}] between the $i$-th data point $\mathbf{x}_i$ and the multivariate data distribution where $\bar{\mathbf{x}}$ and $\Tilde{\mathbf{X}}^\top \Tilde{\mathbf{X}}$ are sample estimates for the true but unknown means and covariances, respectively [cf.~\citet[p. 310]{Chave2003}]. 


\vspace{0.2cm}

\begin{defi}[{Mahalanobis Distance {\small [cf. \citet{Mahalanobis2018}]}}]{defi:Mahalanobis_distance}
    Let $\mathcal{P}$ denote a (multivariate) probability distribution with mean $\boldsymbol{\mu} \in \mathbb{R}^n$ and positive semi-definite covariance matrix $\boldsymbol{\Sigma} \in \mathcal{M}_{n \times n}(\mathbb{R})$. Then the \textbf{Mahalanobis distance} between an arbitrary $n$-dimensional point $\mathbf{x} \in \mathbb{R}^n$ and the distribution $\mathcal{P}$ is defined as
    \begin{equation}
    d_{M} \left( \mathbf{x}, \mathcal{P} \right) := \sqrt{\left( \mathbf{x} - \boldsymbol{\mu} \right)^\top \boldsymbol{\Sigma}^{-1}\left( \mathbf{x} - \boldsymbol{\mu} \right)},
    \end{equation}
    which simplifies to the \textbf{Euclidean distance} if the covariance matrix fulfills $\boldsymbol{\Sigma} \equiv \mathbf{I}_n$.
\end{defi}

\vspace{-0.2cm}


Besides the previously derived connection to the Mahalanobis distance, it is noteworthy that the hat values also provide important information about the variability of the residuals $\mathbf{r} := \mathbf{y} - \Hat{\mathbf{y}} \in \mathbb{R}^n$. 
In contrast to the true unknown error terms $\boldsymbol{\epsilon}$, the residuals do not necessarily have homoscedastic variances and are not uncorrelated [cf. \citet[p. 136]{Fahrmeir2021}]. 
The latter can be easily seen based on their covariance matrix
\begin{align}
    \begin{split}
       \mathbb{V} \left(\mathbf{r} \right) = \mathbb{V} \left(\mathbf{y} - \hat{\mathbf{y}} \right) &= \left( \mathbf{I}_n - \mathbf{H} \right) \mathbb{V} \left(\mathbf{y} \right) (\mathbf{I}_n - \mathbf{H}) \\
       &= \sigma^2 \cdot \left(\mathbf{I}_n - \mathbf{H} - \mathbf{H} + \mathbf{H}^2  \right) \\
        &= \sigma^2 \cdot \left(\mathbf{I}_n - \mathbf{H} \right),
    \end{split}
\end{align}
which also directly points out the leading role of the hat matrix [cf. \citet[p. 21]{Tutz2011}]. Accordingly, the higher the hat value $h_{ii}$, the stronger the variance of the $i$-th residual $r_i$ approaches zero, which can be even better seen based on
\begin{equation}
    \mathbb{V} \left(r_i \right) = \sigma^2 \cdot \left(1 - h_{ii} \right),
\end{equation}
where $\sigma \in \mathbb{R}_+$ corresponds to the constant standard deviation of the true unknown error terms $\boldsymbol{\epsilon}$ [cf.~\citet[p. 56]{Chatterjee2020}]. 
Even though it might appear slightly counterintuitive that the residual variance of a point with high leverage is close to zero, one has to keep in mind that variance is measured relative to repetitions of the data experiment with the same independent variables [cf.~\citet[p. 199]{Hocking2003}]. 
In most replications, such an unusual combination of predictors would cause that the regression line nearly intersects the point $(y_i, \mathbf{x}_i)$. 
Therefore, the corresponding residuals are always small with low variability. 
This then highlights once more that the residuals are not good indicators for detecting leverage points and that instead one could consider the following scaling
\begin{equation}
    \Tilde{r}_i := \frac{r_i}{\sqrt{1 - h_{ii}}} \in \mathbb{R},
\end{equation}
to obtain standardized residuals that at least in theory all have the same variance, i.e., $\mathbb{V} \left( \Tilde{r}_i \right) = \sigma^2$, and are generally better indicators for outliers in $\mathbf{x}$-direction [cf. \citet[pp. 21--22]{Tutz2011}; \citet[pp. 199]{Hocking2003}]. 
More precisely, if the data set contained only one leverage point, such a scaling would guarantee that the residual of the respective instance is inflated [cf. \citet[p. 107]{Celik1992}]. 
However, in the case of several outliers, it is noteworthy that the scaling could not have its desired effect. 
This can be explained based on the Mahalanobis distance representation of the hat value where the sample estimates for the true unknown means and covariances can be distorted due to masking effects [cf. \citet[p. 635]{Rousseeuw1990}]. 
To address this limitation, \citet{Rousseeuw1990} proposed to replace the samples estimates with robuster versions that generally increase the reliability of the outlier diagnostic.
However, as this proposal also comes at a much higher computational cost, we restricted our approach to using the traditional hat values in the scaling matrix $\mathbf{B}_k$.



\section{Additional Numerical Results}\label{sec:further_num_results}


In this section, we provide additional graphical illustrations for the experiments conducted in Subsection~\ref{sec:synthetic_data_exp}.
In particular, Figure~\ref{fig:dmlp_atd_inf_2} compares the fraction of infeasible solutions (left y-axis, bar plots) and the \gls{ATD} of all feasible outcomes (right y-axis, box plots) across the selected settings based on the \gls{DMLP}. 
In comparison to Figure~\ref{fig:atd_dmlp} in Subsection~\ref{sec:synthetic_data_exp}, the \gls{ATD}-axis is not restricted to the interval $[0.05, 0.45]$. 
This allows to better observe for which data settings the Fast-\gls{LTS} estimator breaks down. 
Specifically, the high dimensional setups with many independent variables are the most challenging for the heuristic, where the deteriorating initialization quality reveals the lack of robustness in the underlying sorting procedure. 
As a result, the median \gls{ATD} of \gls{sBDCA} with preconditioning is up to 90\% lower than the solution of Fast-\gls{LTS}.


\begin{figure}[H]%
    \centering
    \includegraphics[width=1\textwidth]{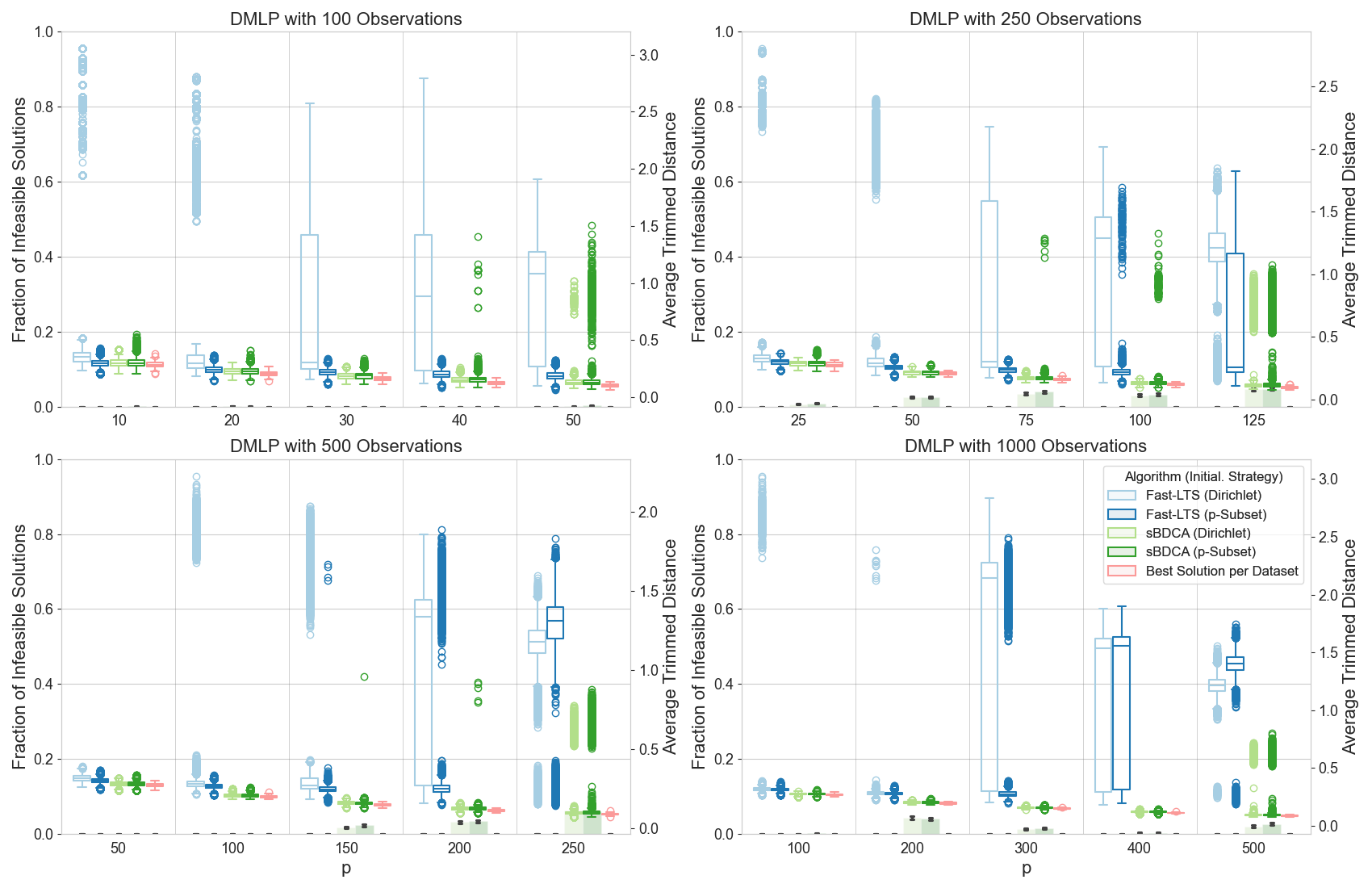}
    \caption{Performance comparison between \gls{sBDCA} with preconditioning and Fast-\gls{LTS} using the \gls{DMLP} and focusing on algorithmic reliability and output quality. 
    Each subplot displays the fraction of infeasible solutions (left y-axis, bar plots) and the \gls{ATD} of all feasible outcomes (right y-axis, box plots) across different numbers of input variables on the x-axis. 
    Results are grouped by solver type and initialization strategy (Dirichlet, $p$-subset).
    The box plots in red display the lowest \gls{ATD} among all feasible solutions per dataset, independent of the solver.}%
    \label{fig:dmlp_atd_inf_2}%
\end{figure}

\newpage


\printglossary[type=\acronymtype, title=List of Abbreviations]
\printglossary

\end{document}